\documentclass[11pt,a4paper,reqno]{amsart}
\usepackage{amsfonts,amsthm, amscd, epsfig, amsmath, amssymb,enumerate}

\title[Zero range processes   on  the supercritical percolation cluster]{Hydrodynamic limit of zero
range  processes among random conductances on the supercritical
percolation cluster}

\author{Alessandra Faggionato}
\address{Alessandra Faggionato. Dipartimento di Matematica ``G. Castelnuovo", Universit\`a   ``La
  Sapienza''. P.le Aldo Moro  2, 00185  Roma, Italy. e--mail:
  faggiona@mat.uniroma1.it}



\setlength{\oddsidemargin}{5mm}

\setlength{\evensidemargin}{5mm} \setlength{\textwidth}{150mm}
\setlength{\headheight}{0mm} \setlength{\headsep}{12mm}
\setlength{\topmargin}{0mm} \setlength{\textheight}{220mm}

\numberwithin{equation}{section}

\DeclareMathSymbol{\leqslant}{\mathalpha}{AMSa}{"36} 
\DeclareMathSymbol{\geqslant}{\mathalpha}{AMSa}{"3E} 
\DeclareMathSymbol{\eset}{\mathalpha}{AMSb}{"3F}     
\renewcommand{\leq}{\;\leqslant\;}                   
\renewcommand{\geq}{\;\geqslant\;}                   


\newcommand{\be}{\begin{equation}}


\def\1{\ifmmode {1\hskip -3pt \rm{I}} \else {\hbox {$1\hskip -3pt \rm{I}$}}\fi}


\newtheorem{Th}{Theorem}[section]
\newtheorem{Le}[Th]{Lemma}

\newtheorem{Rem}{Remark}


\newcommand{\cB}{\ensuremath{\mathcal B}}
\newcommand{\cC}{\ensuremath{\mathcal C}}
\newcommand{\cD}{\ensuremath{\mathcal D}}
\newcommand{\cE}{\ensuremath{\mathcal E}}
\newcommand{\cF}{\ensuremath{\mathcal F}}
\newcommand{\cG}{\ensuremath{\mathcal G}}

\newcommand{\cI}{\ensuremath{\mathcal I}}
\newcommand{\cJ}{\ensuremath{\mathcal J}}

\newcommand{\cL}{\ensuremath{\mathcal L}}
\newcommand{\cM}{\ensuremath{\mathcal M}}
\newcommand{\cN}{\ensuremath{\mathcal N}}

\newcommand{\cS}{\ensuremath{\mathcal S}}

\newcommand{\cZ}{\ensuremath{\mathcal Z}}


\newcommand{\bbE}{{\ensuremath{\mathbb E}} }

\newcommand{\bbI}{{\ensuremath{\mathbb I}} }

\newcommand{\bbL}{{\ensuremath{\mathbb L}} }

\newcommand{\bbN}{{\ensuremath{\mathbb N}} }

\newcommand{\bbP}{{\ensuremath{\mathbb P}} }
\newcommand{\bbQ}{{\ensuremath{\mathbb Q}} }
\newcommand{\bbR}{{\ensuremath{\mathbb R}} }

\newcommand{\bbX}{{\ensuremath{\mathbb X}} }

\newcommand{\bbZ}{{\ensuremath{\mathbb Z}} }

%
%
\let\a=\alpha   \let\c=\chi \let\d=\delta  \let\e=\varepsilon
 \let\g=\gamma     \let\k=\kappa  \let\l=\lambda
      \let\o=\omega    \let\p=\pi  
  \let\s=\sigma \let\t=\tau   
  \let\z=\zeta
\let\D=\Delta   \let\G=\Gamma  \let\L=\Lambda 
\let\O=\Omega      

%

%

%

%

%

%

%

%
%

\def\\{\hfill\break}

\def\tthsp{\kern .083333 em}

\def\?{\mskip -10mu}
%
%

\def\indbox#1{\hbox to \parindent{\hfil\ #1\hfil} }

\newfam\msafam
\newfam\msbfam
\newfam\eufmfam
%
%
%
\def\hexnumber#1{%
  \ifcase#1 0\or 1\or 2\or 3\or 4\or 5\or 6\or 7\or 8\or
  9\or A\or B\or C\or D\or E\or F\fi}
\font\tenmsa=msam10 \font\sevenmsa=msam7 \font\fivemsa=msam5
\textfont\msafam=\tenmsa \scriptfont\msafam=\sevenmsa
\scriptscriptfont\msafam=\fivemsa
\edef\msafamhexnumber{\hexnumber\msafam}%
%

%
\mathchardef\restriction"1\msafamhexnumber16 \mathchardef\ssim"0218
\mathchardef\square"0\msafamhexnumber03
\mathchardef\eqd"3\msafamhexnumber2C
\def\QED{\ifhmode\unskip\nobreak\fi\quad
  \ifmmode\square\else$\square$\fi}
\font\tenmsb=msbm10 \font\sevenmsb=msbm7 \font\fivemsb=msbm5
\textfont\msbfam=\tenmsb \scriptfont\msbfam=\sevenmsb
\scriptscriptfont\msbfam=\fivemsb

\font\teneufm=eufm10 \font\seveneufm=eufm7 \font\fiveeufm=eufm5
\textfont\eufmfam=\teneufm \scriptfont\eufmfam=\seveneufm
\scriptscriptfont\eufmfam=\fiveeufm

\def\({\left(}
\def\){\right)}
%
%
%

%
\let\neper=e
\let\ii=i

\let\<=\langle
\let\>=\rangle

\outer\def\nproclaim#1 [#2]#3. #4\par{\medbreak \noindent
   \talato(#2){\bf #1 \Thm[#2]#3.\enspace }%
   {\sl #4\par }\ifdim \lastskip <\medskipamount
   \removelastskip \penalty 55\medskip \fi}
\def\thmm[#1]{#1}
\def\teo[#1]{#1}
%
%
\def\sttilde#1{%
\dimen2=\fontdimen5\textfont0 \setbox0=\hbox{$\mathchar"7E$}
\setbox1=\hbox{$\scriptstyle #1$} \dimen0=\wd0 \dimen1=\wd1
\advance\dimen1 by -\dimen0 \divide\dimen1 by 2
\vbox{\offinterlineskip%
   \moveright\dimen1 \box0 \kern - \dimen2\box1}
}
%

%
%
%


\newcommand{\bfA}{{\ensuremath{\mathbf A}} }
\newcommand{\bfB}{{\ensuremath{\mathbf B}} }
\newcommand{\bfC}{{\ensuremath{\mathbf C}} }
\newcommand{\bfa}{{\ensuremath{\mathbf a}} }
\newcommand{\bfw}{{\ensuremath{\mathbf w}} }
\begin{document}

\maketitle
\begin{abstract}
We consider  i.i.d. random variables $\{\o (b):b \in \bbE_d \}$
parameterized by the family of  bonds in $\bbZ^d$, $d\geq 2$. The
random variable $\o(b)$ is  thought of as the conductance of bond
$b$ and  it ranges in a finite interval $[0,c_0]$.  Assuming the
probability $m$ of the event $\{\o(b)>0\}$ to be supercritical and
denoting by  $\cC(\o)$  the  unique infinite cluster associated to
the bonds with positive conductance, we study the zero range process
on $\cC(\o)$ with $\o(b)$--proportional probability rate of jumps
along bond $b$. For almost all realizations of the environment we
prove that the hydrodynamic behavior  of the zero range process is
governed  by the nonlinear heat equation $\partial_t \rho= m \nabla
\cdot ( \cD \nabla\phi(\rho/m) )$, where the matrix $\cD$ and the
function $\phi$ are $\o$--independent.  As byproduct of the above
result and the blocking effect of the finite clusters, we discuss
the bulk behavior of the zero range process on $\bbZ^d$ with
conductance field $\o$.  We do not require any ellipticity
condition.

\medskip

 \noindent \emph{Key words}: disordered system,
 bond percolation,  zero range process, hydrodynamic limit, homogenization, stochastic domination.

\smallskip
\noindent \emph{AMS 2000 subject classification}:   60K35, 60J27,
82C44.
\end{abstract}

\section{Introduction}

Percolation  provides a simple and, at the same time,  very rich
model of disordered medium \cite{G}, \cite{Ke}. The motion of a random walker on percolation clusters has
been deeply investigated in Physics (see \cite{BH} and reference therein) and also numerous rigorous
results are now available. In the last years, for the supercritical percolation cluster
 it has been possible to prove the convergence of the diffusively rescaled random walk to the Brownian motion
for almost all realizations of the percolation   \cite{SS}, \cite{BB}, \cite{MP}, improving the
annealed  invariance principle obtained in \cite{DFGW}.
 We address here our attention to interacting random walkers,
 moving    on the supercritical Bernoulli
bond percolation cluster  with additional environmental disorder
given by random conductances (for recent results on random walks
among random conductances see \cite{BP}, \cite{M}, \cite{F} and
reference therein).

\smallskip

 Particle interactions can be of different kind. An
example is given by site exclusion, the hydrodynamic behavior of the
resulting exclusion process has been studied in \cite{F}. Another
basic example,  considered here, is the  zero range interaction:
particles lie on the sites of the infinite cluster without any
constraint, while the probability rate of the jump of a particle
from site $x$ to a neighboring site $y$ is given by $g(\eta(x) )
\o(x,y)$, where $g$ is a  suitable function on $\bbN$, $\eta(x)$ is
the number of particles at site $x$ and $\o(x,y)$ is the conductance
of the bond $\{x,y\}$. We suppose that the concuctances are i.i.d.
random variables taking value in $[0,c_0]$.

\smallskip

 The above exclusion and zero range processes are
non--gradient systems, since due to the disorder the algebraic local
current cannot be written as spatial gradient of some local
function.
Nevertheless,  thanks to the independence of  the  conductances from any bond orientation,  one can study the hydrodynamic behavior
 avoiding the heavy machinery of non--gradient particle systems
  \cite{V}, \cite{KL}[Chapter VII].
 Indeed,  in the case of exclusion processes, due to the above
 symmetry of the conductance field
 the infinitesimal variation of the occupancy number $\eta(x)$ is a
linear combination of occupancy numbers.
 This degree conservation strongly simplifies  the analysis of the limiting behavior
 of the random  empirical measure  with respect to  genuinely non--gradient
 disordered models  as in \cite{Q1}, \cite{FM}, \cite{Q2}, and can be
 reduced to an homogenization problem \cite{F}. In the case of
 zero--range processes, this degree conservation is broken.
 Nevertheless, due to the  symmetry of the conductance field,
   adapting  the method of the corrected empirical measure \cite{GJ1} to the present contest
    one  can   reduce  the proof of the hydrodynamic limit to an
 homogenization problem plus the proof of the Replacement Lemma. The resulting diffusive  hydrodynamic equation
 does not depend on the environment and keeps memory on the particle interaction.

\smallskip
 The homogenization problem has been solved in \cite{F}
also for more general random conductance fields. The core of the problem here is the proof of the
Replacement Lemma. This technical lemma  compares the particle density on
microscopic boxes with the particle density on  macroscopic boxes
and it is a  key tool in order to go from the microscopic scale  to the macroscopic one. This comparison
is usually made by moving particles along macroscopic paths by microscopic steps and then summing the local
variations  at each step. The resulting method corresponds to the so called   Moving
Particle Lemma and becomes efficient if the chosen macroscopic paths allow a spread--out
 particle flux, without any concentration in some special bond.
While for a.a. $\o$ any two points $x,y$ in
a box $\L_N$ of side $N$ centered at the origin can be connected
inside the infinite cluster  by a path $\gamma_{x,y}$ of length at
most $O(N)$  \cite{AP}, it is very hard (maybe impossible) to
exhibit such  a family of paths $\{\gamma_{x,y}\}_{x,y\in \L_N}$
with a reasonable  upper bound of the number of paths going through
a given bond $b$, uniformly in $b$. Due to this obstacle, we will
prove the Moving Particle Lemma not in its standard form, but in a
weaker form, allowing anyway to complete the proof of the
Replacement Lemma. We point out that in this step we use some
technical results of   \cite{AP}, where the chemical distance inside
the supercritical  Bernoulli bond  percolation  cluster is studied.
It is only here that we need the hypothesis of i.i.d. conductances.
Extending part of the results of \cite{AP}, one would get the
hydrodynamic limit of zero range processes among random conductances
on infinite clusters of more general conductance fields as in
\cite{F}.

\smallskip
We comment another technical problem we had to handle with.
The
discussion in \cite{GJ1} refers to the zero range process on a
finite toroidal grid
with conductances bounded above and below by some positive constants, and some steps cannot work here
 due to the presence of infinite particles. A particular care has to be devoted to the control of phenomena of  particle
 concentration  and slightly stronger homogenization results are required.

\smallskip
Finally, in the Appendix we discuss  the bulk behavior of the zero
range process on $\bbZ^d$ with i.i.d. random conductances in
$[0,c_0]$, in the case of initial distributions with slowly varying
parameter. Due to the blocking effect of the clusters with finite
size, the bulk behavior is not described by a nonlinear heat
equation.

\smallskip

We   recall that the problem of density fluctuations for the zero
range process
 on the supercritical Bernoulli bond  percolation    cluster with constant conductances
has been studied in \cite{GJ2}. Recently, the hydrodynamic limit of
other interacting particle systems on $\bbZ^d$, or fractal spaces,
with random conductances has been proved (cf. \cite{F1}, \cite{FJL},
\cite{JL}, \cite{LF}, \cite{Val}). We point out  the pioneering
paper \cite{Fr}, where J. Fritz has proved the hydrodynamic
behaviour of a one-dimensional Ginzburg-Landau model with
conservation law in the presence of random conductances.

\section{Models and results}\label{natalino}

\subsection{The environment}\label{salutare}
The   environment $\o$  modeling the disordered medium is given by
i.i.d. random variables $\bigl(\o(b)\,:\, b \in \bbE_d\bigr)$,
parameterized by the set $\bbE_d$ of non--oriented bonds in
$\bbZ^d$, $d\geq 2$.
 $\o$ and $\o(b)$ are  thought of as the  conductance field  and the
conductance at bond $b$, respectively.
 We call $\bbQ$ the law of
the field $\o$ and we assume that $ \o(b) \in [0,c_0]$ for
$\bbQ$--a.a. $\o$,  for some fixed positive constant $c_0$. Hence,
without loss of generality, we can suppose that $\bbQ$ is a
probability measure on the product space $\Omega:=[0,c_0] ^{\bbE _d}
$. Moreover, in order to simplify the notation, we write $\o(x,y)$
for the conductance $\o(b)$ if $b=\{x,y\}$. Note that
$\o(x,y)=\o(y,x)$.

\smallskip

  Consider the random graph $G (\o)= \bigl(  V (\o), E
(\o) \bigr)$ with  vertex set $V (\o)$ and bond set $E(\o)$ defined
as
\begin{align*}
& E (\o):=\bigl\{b\in \bbE_d\,:\, \o (b) >0\bigr \}\,, \\
& V (\o) :=\bigl\{x\in \bbZ ^d \,:\, x\in b \text{ for some } b \in
 E (\o ) \bigr\}.
\end{align*}
Assuming the probability  $\bbQ( \o (b)>0)$ to be supercritical,
 the translation invariant Borel subset $\O_0\subset \O$ given by
the configurations $\o$ for which the graph $ G (\o)$ has a unique
infinite connected component (cluster) $\cC (\o) \subset  V (\o )$
has $\bbQ$--probability $1$ \cite{G}.
 Below, we denote by $\cE (\o)$ the bonds in $E(\o)$
connecting points of $\cC(\o)$ and we will often understand the fact
that $\o \in \O_0$.


For later use, given $c>0$  we define the random field $\hat
\o_c=\bigl(\hat \o_c(b)\,:\, b \in \bbE_d\bigr)$  as
 \begin{equation}\label{paranza3}
\hat \o_c (b) = \begin{cases} 1 & \text{ if } \o(b)>c\,, \\
0 & \text{ otherwise}\,.
\end{cases}
\end{equation}
For $c=0$ we simply set $\hat \o:=\hat \o _0$.

\subsection{The zero range process on the infinite
cluster $\cC(\o)$}\label{lavinia_giorgia}  We fix a  nondecreasing
function $g:\bbN \rightarrow [0,\infty)$ such that $g(0)=0$,
$g(k)>0$ for all $k >0$ and
\begin{equation}
g^*:=\sup _{k \in \bbN} |g(k+1)-g(k)|<\infty\,.
\end{equation}

 Given a realization $\o$ of the environment, we
consider the  zero range process  $\eta _t $ on the graph
 $\cG(\o)= \bigl( \cC (\o), \cE (\o)\bigr) $ where a particle jumps from $x$ to $y$ with rate $g(\eta(x)) \o(x,y)$.
  This is the
 Markov process with paths $\eta(t)$ in the Skohorod space
$D\bigl( [0,\infty), \bbN  ^{\cC (\o) } \bigr)$  whose Markov
generator $\cL $ acts on local functions as
\begin{equation}\label{generatore}
\cL  f (\eta) = \sum _{e\in \cB} \sum _{\substack{x\in
\cC(\o)\,:\,\\
x+e \in \cC (\o) }  }g(\eta(x))  \o(x,x+e)
 \left( f(\eta ^{x,x+e})- f(\eta)\right)\,,
\end{equation}
where $\cB=\{ \pm e_1, \pm e_2, \dots, \pm e_d\}$, $e_1, \dots, e_d$
being the  canonical basis of $\bbZ^d$, and where in general
$$ \eta^{x,y} (z)  =
\begin{cases}
\eta(x)-1,&\text{ if }  z=x\,,\\
\eta(y)+1, &\text{ if }z=y\,,\\
\eta(z), &\text{ if } z \neq x,y\,.\\
\end{cases}
$$
We recall that a function $f$ is called local if $f(\eta)$  depends
only on $\eta(x)$  for a finite number of sites $x$.  Since
$\cC(\o)$ is infinite, the above process is well defined only for
suitable initial distribution. As discussed in \cite{A},
the process is well defined when the initial distribution has
support on configurations $\eta$ such that $\|\eta\|:=\sum _{x\in
\cC(\o) } \eta (x) a(x) <\infty$, $a(\cdot)$ being a strictly
positive real valued function on $ \cC (\o)$ such that
$$\sum _{x\in \cC(\o)} \sum_{e \in \cB} \o(x,x+e) a(x+e)\leq M a(x)$$
 for some positive constant $M$.

Given $\varphi \geq 0$, set  $Z(\varphi):= \sum_{k\geq 0} \varphi^k
/ g(k)!$ where $g(0)!=1$, $g(k)!=g(1) g(2) \cdots g(k)$ for $k\geq
1$. Since $Z(\varphi)$ is an increasing function  and $g(k)!\geq
g(1)^k$, there exists a critical value $\varphi_c\in (0, \infty]$
such that $Z(\varphi)<\infty$ if $\varphi < \varphi_c$ and
$Z(\varphi)=\infty$ if $\varphi>\varphi_c$. Then, for $0\leq
\varphi< \varphi_c$ we define
 $ \bar  \nu _\varphi$  as the product probability measure on $\bbN ^{\cC (\o)
}$ such that
$$ \bar  \nu_\varphi (\eta(x)=k) =\frac{1}{Z(\varphi) } \frac{\varphi^k}{
g(k)!}\,, \qquad k \in \bbN\,, \qquad x \in \cC (\o)\,.
$$
Taking for example $\a(x)= e^{-|x|}$ in the definition of $\|\eta\|$
one obtains that $\bar \nu_\varphi (\|\eta\|)<\infty$, thus implying
that  the zero range process is well defined whenever the initial
distribution is given by $\bar \nu _\varphi$ or by a probability
measure $\mu$ stochastically dominated by $\bar \nu _\varphi$. In
this last case, as proven in \cite{A}, by the monotonicity of $g$
one obtains that the zero range process $\eta_t$ starting from $\mu$
is stochastically dominated by the zero range process $\zeta_t$
starting from $\bar \nu_\varphi$, i.e. one can construct on an
enlarged probability space both processes $\eta _t$ and $\zeta _t$
s.t. $\eta_t(x) \leq \zeta_t (x)$ almost surely. Finally, we recall
that all measures $\bar \nu_\varphi$ are reversible  for the zero
range process and that $\bar \nu _\varphi (e^{\theta \eta (x)
})<\infty$ for some $\theta >0$, thus implying that  $\bar \nu
_\varphi (\eta(x)^k)<\infty$ for all $k \geq 0$ (cf. Section 2.3 of
\cite{KL}).

\medskip

As usually done, we assume that $\lim _{\varphi \uparrow \varphi_c}
Z(\varphi)=\infty $. Then, cf. Section 2.3 in \cite{KL}, the
function $R(\varphi):= \bar \nu _\varphi (\eta(0))$ is   strictly
increasing and gives a bijection from $[0,\varphi_c)$ to $[0,
\infty)$. Given $\rho \in [0,\infty)$ we will write $\varphi(\rho) $
for the unique value such that $R(\varphi)=\rho$. Then we set
\begin{equation}\label{pasqua0}
 \nu_\rho:= \bar \nu _{\varphi(\rho)}\,, \qquad
\phi(\rho):= \nu_{\rho }  ( g(\eta_x) ) \,\;\qquad x \in \cC (\o)
\,.
\end{equation}
As proven in Section 2.3 of \cite{KL}, $\phi$ is Lipschitz with
constant $g^*$.

\subsection{The hydrodynamic limit} Given an integer $N\geq 1$ and
a probability measure $\mu^N$ on   $\bbN^{\cC (\o)}$,  we denote by
$\bbP _{\o, \mu^N}$ the law of the  zero range  process with
generator $N^2 \cL$ (see   \eqref{generatore}) and with initial
distribution $\mu^N$ (assuming this dynamics to be  admissible). We
denote by
 $\bbE_{\o,\mu^N}$ the associated expectation. In order to state the
hydrodynamic limit, we define $B (\O)$ as the family of bounded
Borel functions on $\O$ and let  $\cD$ be the $d\times d $ symmetric
matrix characterized by the variational formula
\begin{equation}\label{varcar}
(a, \cD a) =\frac{1}{m} \inf _{\psi \in B (\O) }\left\{
 \sum _{e\in \cB_*} \int _\O \o (0,e)  ( a_e+ \psi (\t_e\o)-\psi (\o) ) ^2 \bbI _{0,e\in \cC (\o) }
  \bbQ ( d\o)\right\} \;,\;\;\forall a\in \bbR^d \,,
  \end{equation}
where $\cB_*$ denotes the canonical basis of $\bbZ^d$,
\begin{equation}\label{defm} m:= \bbQ \left(0 \in \cC (\o)
\right)\,  \end{equation} and the translated environment  $\t_e \o $
is defined as $\t_e \o (x,y)=\o(x+e, y+e)$ for all bonds $\{x,y\}$
in $\bbE_d$. In general, $\bbI_A$ denotes the characteristic
function of $A$.

The above matrix $\cD$ is the diffusion matrix of the random walk
among random conductances on the supercritical percolation cluster
and it equals the identity matrix multiplied by a positive constant
(see the discussion in  \cite{F} and references therein).

\begin{Th}\label{annibale}
For $\bbQ$--almost all environments $\o$ the following holds. Let
$\rho _0: \bbR ^d \rightarrow [0,\infty)$ be a bounded Borel
function and let $\{\mu _N\} _{N\geq 1}$ be a sequence  of
probability measures on $\bbN  ^{\cC (\o)} $ such that
 for all $\d>0$ and all continuous functions $G$ on $ \bbR^d $
with compact support (shortly $G \in C_c (\bbR ^d)) $, it holds
\begin{equation}\label{pasqua1}
\lim _{N \uparrow \infty} \mu^N \Big( \Big| N^{-d}  \sum _{x\in \cC
(\o) } G (x/N )\,  \eta (x) - \int _{\bbR ^d} G (x) \rho_0 (x) dx
\Big|>\d \Big )=0\,.
\end{equation}
Moreover, suppose that  there exist $\rho_0 ,\rho_*,C_0>0$  such
that $\mu^N$ is stochastically dominated by $\nu_{\rho_0}$ and the
entropy $H(\mu^N|\nu_{\rho_*} )$ is bounded by $C_0 N^d$.

 Then, for all $t>0$, $G \in C_c (\bbR ^d)$ and $\d>0$, it
 holds
\begin{equation}\label{pasqua2}
\lim _{N\uparrow \infty  } \bbP _{\o, \mu^N}  \Big ( \Big| N^{-d}
\sum _{x\in \cC (\o) } G (x /N)\,  \eta  _{ t} (x) - \int _{\bbR ^d}
G (x) \rho (x,t ) dx  \Big|>\d \Big )=0\,,
\end{equation}
where  $\rho: \bbR^d \times [0,\infty) \rightarrow \bbR$ is assumed to be  the
 unique weak solution of the heat equation
\begin{equation}\label{pasqua3}
\partial  _t \rho = m \nabla \cdot ( \cD \nabla  \phi(\rho/m))
\end{equation}
with boundary condition $\rho_0$ at $t=0$.
\end{Th}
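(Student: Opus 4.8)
The plan is to prove the hydrodynamic limit by the entropy method of Guo--Papanicolaou--Varadhan, cast in the non-gradient form dictated by the corrected empirical measure of \cite{GJ1}, so that everything reduces to the homogenization result of \cite{F} together with a Replacement Lemma. I would work throughout under the two a priori inputs in the statement: the stochastic domination $\mu^N\prec\nu_{\rho_0}$ and the entropy bound $H(\mu^N|\nu_{\rho_*})\leq C_0N^d$. First I would collect the basic estimates. By the attractiveness of the dynamics and the coupling recalled before the theorem, the domination $\mu^N\prec\nu_{\rho_0}$ propagates in time, so $\eta_t(x)$ is dominated by a stationary configuration with all exponential moments finite; this controls the number of particles in any macroscopic box and legitimizes all the integrations below. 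Since the measures $\nu_\rho$ are reversible for $\cL$, the entropy inequality $\partial_t H(\mu^N_t|\nu_{\rho_*})\leq -N^2 D_N(\sqrt{f^N_t})$ (with $D_N$ the Dirichlet form of $\cL$ and $f^N_t$ the density of the law of $\eta_t$) integrates to the dissipation bound $\int_0^T N^2 D_N(\sqrt{f^N_t})\,dt\leq C_0N^d$. These facts, together with a standard Aldous-type estimate, give tightness of the empirical measures $\pi^N_t:=N^{-d}\sum_{x\in\cC(\o)}\eta_t(x)\,\delta_{x/N}$ in $D([0,T],\mathcal M)$.

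Next comes the martingale decomposition and the correction step. Applying $N^2\cL$ to $\langle\pi^N,G\rangle$ produces, by Dynkin's formula, a drift involving the discrete currents $g(\eta(x))\,\o(x,x+e)$ weighted by discrete gradients of $G$, plus a martingale whose quadratic variation is $O(N^{-d})$ and hence negligible. Because the system is non-gradient I would not integrate by parts directly; instead I would test against the corrected function $G(x/N)+N^{-1}\sum_{i=1}^d(\partial_iG)(x/N)\,\xi_i(\tau_x\o)$, where $\xi_i$ is the square-integrable corrector realizing the infimum in \eqref{varcar}, whose existence and the convergence of whose discrete gradients are exactly the homogenization content of \cite{F}. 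With this choice the current terms split into a piece that, after the Replacement Lemma, reconstructs the homogenized operator $m\,\nabla\cdot(\cD\,\nabla\phi(\rho/m))$, and fluctuating pieces that vanish in $L^2(\bbP_{\o,\mu^N})$ by the homogenization of \cite{F} and by the dissipation bound above.

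The heart of the argument is the Replacement Lemma, which lets one replace the local average of $g(\eta(x))$ by $\phi$ of the local empirical density, thereby closing the equation. As usual this reduces to one-block and two-blocks estimates, and the two-blocks estimate rests on a Moving Particle Lemma: one transports a particle between two sites $x,y$ of $\cC(\o)$ along a path $\gamma_{x,y}$ and bounds the cost by $D_N$. This is the main obstacle. For a.a. $\o$ one has paths of chemical length $O(N)$ between any two points of a box $\Lambda_N$ \cite{AP}, but no family $\{\gamma_{x,y}\}$ is available with a uniform bound on the number of paths crossing a given bond, and without such a bound the standard Moving Particle Lemma fails. I would therefore prove only a weaker form of the lemma---averaging the transport cost over the endpoints and over the randomness, and exploiting the chemical-distance and volume-regularity estimates of \cite{AP}---which still suffices to carry out the two-blocks estimate; it is precisely here that the i.i.d.\ hypothesis on the conductances is used. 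Relative to the finite-torus discussion of \cite{GJ1}, extra care must be taken to rule out particle concentration, which I would control using the exponential moments from the stochastic domination.

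Finally I would identify the limit points. Combining the corrected martingale identity with the Replacement Lemma and the homogenization input shows that every limit point of $\pi^N$ is supported on trajectories $\rho(x,t)\,dx$ solving the weak formulation of \eqref{pasqua3}, with initial datum $\rho_0$ coming from \eqref{pasqua1}. Since $\phi$ is Lipschitz, the heat equation \eqref{pasqua3} has a unique weak solution by a standard energy estimate; hence the full sequence converges and \eqref{pasqua2} follows.
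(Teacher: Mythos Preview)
Your overall architecture---tightness via a corrected empirical measure, martingale decomposition, homogenization input from \cite{F}, and a Replacement Lemma proved through One Block/Two Blocks estimates with a \emph{weak} Moving Particle Lemma built on the chemical-distance results of \cite{AP}---is exactly the route the paper takes. The identification of the weak Moving Particle Lemma as the crux, and of the i.i.d.\ hypothesis as entering only there, is also correct.

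The one substantive divergence is in how you implement the correction. You propose testing against the two-scale ansatz $G(x/N)+N^{-1}\sum_i(\partial_iG)(x/N)\,\xi_i(\tau_x\o)$ with $\xi_i$ the corrector realising the infimum in \eqref{varcar}. The paper instead follows \cite{GJ1} literally and uses the \emph{resolvent} correction: it sets $G^\lambda_N:=(\lambda-\bbL_N)^{-1}(\lambda G-\nabla\cdot\cD\nabla G)$ on $\cC_N(\o)$ and works with $\pi^N_t[G^\lambda_N]$. The homogenization theorem actually invoked is Theorem~2.4(iii) of \cite{F}, which gives $\|G^\lambda_N-G\|_{L^2(\nu^N_\o)}\to0$; the paper then upgrades this to $L^1(\nu^N_\o)$ convergence by a separate argument (needed because infinitely many particles force $L^1$ control of test functions, not just $L^2$). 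Your corrector route is in principle viable, but note two issues: the infimum in \eqref{varcar} is over bounded $\psi$ and need not be attained, so $\xi_i$ lives only as a function with stationary $L^2$ gradients, and you would need quantitative sublinearity plus moment bounds on $\nabla\xi_i$ that are not packaged in \cite{F} in the form you need. The resolvent approach sidesteps all of this and plugs directly into what \cite{F} provides.

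Two smaller remarks. First, uniqueness of weak solutions to \eqref{pasqua3} is \emph{assumed} in the theorem statement, not proved; your closing sentence overstates what is needed. Second, the control of particle concentration in infinite volume is handled in the paper not via exponential moments alone but through a Kipnis--Varadhan maximal inequality for reversible processes (applied under $\nu_{\rho_0}$ via the domination), which gives uniform-in-time bounds on $\sup_{0\le t\le T}N^{-d}\sum_x\eta_t(x)^kH(x/N)$ for $H\in L^1\cap L^2(\nu^N_\o)$; this is what makes the resolvent-corrected objects well defined and what replaces the finite-volume shortcuts of \cite{GJ1}.
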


We define the empirical measure $\p^N (\eta)$ associated to the
particle configuration $\eta$ as
$$ \p ^N (\eta) :=\frac{1}{N^d} \sum _{x \in \cC(\o)} \eta (x) \d _{x/N} \in
\cM (\bbR^d )\,, $$
 where $\cM  (\bbR^d)$ denotes the Polish space of non--negative
 Radon measures on $\bbR^d$ endowed with the vague topology (namely, $\nu_n \rightarrow \nu$ in $\cM$ if and only if
 $\nu_n(f)\rightarrow \nu (f)$ for each $f \in C_c(\bbR^d))$. We refer to the Appendix
of \cite{S} for a detailed discussion about  the space $\cM$ endowed
of  the vague topology.
We write
 $\p^N_t$ for the empirical measure $\p^N (\eta_t)$, $\eta_t$ being the zero range process with
 generator $N^2 \cL$. Then condition
 \eqref{pasqua1} simply means that under $\mu^N$ the random measure
 $\p^N $ converges in probability to $\rho _0(x)dx$, while under
$\bbP_{\o, \mu^N}$ the random measure $\p^N_t$ converges in
probability to $\rho(x,t)dx$, for each fixed $t\geq 0$. In order to
prove the conclusion of Theorem \ref{annibale}   one only needs to
show that the law  of the random path $\p^N _\cdot \in D([0,T], \cM
)$ weakly converges to the delta distribution concentrated on the
path $[0,T]\ni t \rightarrow \rho(x,t) dx \in \cM$ (see
\cite{KL}[Chapter 5]). It is this stronger result that we prove
here.

\smallskip

Let us give some comments on our assumptions.  We have
restricted to increasing functions $g$ in order to assure
attractiveness and therefore that the dynamics is well defined
whenever the initial distributions are stochastically  dominated by
some invariant measure $\nu_{\rho_0}$. This simplifies also some
technical estimates. One could remove the monotone assumption on $g$
and choose other conditions assuring a well defined dynamics and
some basic technical estimates involved in the proof, which would be
similar to the ones appearing in \cite{KL}[Chapter 5].

\smallskip
The entropy bound $H(\mu^N|\nu_{\rho_*})\leq C_0 N^d$ is rather
restrictive. Indeed, given a locally Riemann integrable bounded
profile $\rho_0:\bbR^d\rightarrow [0,\infty)$, let $\mu^N$ be the
product measure on $\bbN^{\cC (\o)}$  with slowly varying parameter
associated to the profile $\rho_0/m$ at scale $N$. Namely, $\mu^N$
is the product measure on $\bbN^{\cC(\o)}$ such that
$$ \mu^N( \eta(x)=k) = \nu_{\rho_0 (x/N)/m } ( \eta(x)=k)\,.$$
Due to the ergodicity of $\bbQ$ condition \eqref{pasqua1} is
fulfilled and, setting $\rho' := \sup_x \rho_0 (x)$, $\mu^N$ is
stochastically dominated by $\nu_{\rho'/m}$. On the other hand, the
entropy $H(\mu^N|\nu_{\rho_*} )$ is given by
\begin{multline}
\frac{1}{N^d} H(\mu^N |\nu _{\rho_*} ) = \frac{1}{N^d} \sum _{x \in
\cC(\o) } \frac{\rho_0 (x/N)}{m}\Big[ \log \varphi\bigl(
\frac{\rho_0 (x/N)}{m}\bigr) -\log \varphi \bigl(\rho_*\bigr) \Big ]
\Big\}+
\\\frac{1}{N^d}\sum _{x \in \cC (\o) } \Big\{ \log Z\bigl(\varphi\bigl(\rho_*\bigr)) -\log Z\bigl(
\varphi \bigl(\frac{\rho_0(x/N)}{m} \bigr) \bigr)
\end{multline}
Hence, $H(\mu^N |\mu _{\rho_*} )\leq C_0 N^d$ only if $\rho_0$
approaches sufficiently fast the constant $m \rho_*$ at infinity.
All these technical problems are due to the infinite space. In order
to weaken the entropic assumption one should proceed as in
\cite{LM}. Since here we want to concentrate on the Moving Particle
Lemma, which is the real new problem, we keep our assumptions.


Finally, we have assumed uniqueness of the solution of  differential equation
 \eqref{pasqua3} with initial condition $\rho_0$. Results on uniqueness can be found in \cite{BC}, \cite{KL}[Chapter 5] and
 \cite{Va}.  Proceeding as in \cite{KL}[Section 5.7] and using the ideas developed below, one can prove
 that the limit points of the sequence $\{ \p^N_t \}_{t \in [0,T]}$ are concentrated on
 paths $t\rightarrow \rho(x,t)dx$ satisfying an energy estimate.

\section{Tightness of $\{ \p^N _t \}_{t \in [0,T]}$}
 As already mentioned, in order to reduce the proof of Theorem
\ref{annibale} to the Replacement Lemma one has to adapt the method
of the corrected empirical measure developed in \cite{GJ1} and after
that invoke some homogenization properties proved in \cite{F}. The
discussion in \cite{GJ1} refers to the zero range process on a
finite toroidal grid  and has to be modified in order to solve
technical problems due to the presence of infinite particles.

\smallskip

Given $N\in \bbN_+$, we define $\bbL_N$ as  the generator of the
  random walk among random conductances $\o$ on
the supercritical percolation cluster, after diffusive rescaling.
 More
precisely, we define $\cC_N(\o) =\{ x/N\,:\, x \in \cC(\o)\}$ and
set
\begin{equation}\label{scrivo}
 \bbL_N  f(x/N)=N^2 \sum _{\substack{e \in \cB:\\
x+e \in \cC(\o)} } \o(x,x+e)\Big\{f ((x+e)/N)-f(x/N) \Big\}
\end{equation}
for all $ x \in \cC(\o)$ and $ f : \cC_N(\o)\rightarrow \bbR$. We
denote by $\nu_\o^N$ the uniform  measure on   $\cC_N(\o)$ given by
$\nu_\o^N= N^{-d} \sum _{x \in \cC_N(\o)} \d_x$. Below  we will
think of the operator $\bbL_N$ as acting on $L^2 ( \nu^N_\o)$. We
write $(\cdot,\cdot)_{\nu^N_\o}$ and $\|\cdot \|_{L^2(\nu^N_\o)}$
for the scalar product and the norm  in $L^2(\nu^N_\o)$,
respectively. Note that $\bbL_N$   is a symmetric operator, such
that $(f, -\bbL_N f)_{\nu^N_\o}>0$ for each nonzero  function $f \in
L^2 ( \nu_\o^N) $. In particular, $\l \bbI - \bbL_N$ is invertible
for each $\l>0$. Moreover, it holds
\begin{equation*}
(f,-\bbL_N g )_{\nu^N_\o} =\frac{1}{2} \sum _{x \in \cC (\o)} \sum
_{\substack{e \in \cB:\\ x+e \in \cC(\o) }} \o (x,x+e) \bigl[
f(x+e)-f(x) \bigr]\cdot \bigl[ g (x+e)-g(x) \bigr]
\end{equation*}
for all functions $f,g \in L^2 (\nu^N_\o)$.
 Given
$\l>0$, $G \in C^\infty_c (\bbR^d)$ and $N \in \bbN_+$ we define
$G^\l_N$ as the unique element of $L^2 ( \nu^N_\o)$ such that
\begin{equation}\label{gigia}
\l G^\l_N -\bbL_N G^\l_N = G^{\l}, \end{equation} where $G^\l$ is
defined as the restriction to $\cC_N(\o)$ of the function $\l G
-\nabla \cdot \cD \nabla G\in C^\infty_c (\bbR^d)$.

Let us collect some useful facts on the function $G^\l_N$:
\begin{Le}\label{meteo}
Fix $\l>0$. Then, for each $G \in C^\infty_c (\bbR^d)$ it holds
\begin{align}
&  (G^\l_N, -\bbL_N G^\l_N )_{\nu^N_\o} \leq
c(G,\l),\\
& \|G^\l_N\|_{L^1 (\nu^N_\o)}, \,\|G^\l_N\|_{L^2(\nu^N_\o)}\leq c(\l,G)\,,\\
& \|\bbL_N G^\l_N\|_{L^1 (\nu^N_\o)}, \,\|\bbL_N
G^\l_N\|_{L^2(\nu^N_\o)} \leq c(\l,G)\,,
\end{align}
for a suitable positive constant $c(\l, G)$ depending on $\l$ and
$G$, but not from $N$. Moreover, for $\bbQ$--a.s. conductance fields
$\o$ it holds
\begin{align}
\lim _{N\uparrow \infty} \| G^\l_N -G\|_{L^2(\nu^N_\o)} =0\,,
\qquad \forall G\in C^\infty_c (\bbR^d)\,, \label{sereno1}\\
\lim _{N\uparrow \infty} \| G^\l_N -G\|_{L^1(\nu^N_\o)} =0\,, \qquad
\forall G\in C^\infty_c (\bbR^d)\,. \label{sereno2}
\end{align}
\end{Le}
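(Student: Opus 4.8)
The plan is to read off all four statements from the resolvent equation $\l G^\l_N-\bbL_N G^\l_N=G^\l$ together with the self-adjointness and non-positivity of $\bbL_N$ recorded just before the lemma. The identity displayed there shows that $\bbL_N$ is self-adjoint and $\leq 0$ on $L^2(\nu^N_\o)$ and that $\nu^N_\o$ is reversible for the associated jump process; since the conductances lie in $[0,c_0]$ and each vertex has degree at most $2d$, the total jump rate is bounded (by $2dc_0N^2$), so the semigroup $P^N_t=e^{t\bbL_N}$ is conservative and $\cR^N_\l:=(\l\bbI-\bbL_N)^{-1}=\int_0^\infty e^{-\l t}P^N_t\,dt$ is a bounded operator with $G^\l_N=\cR^N_\l G^\l$. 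From $\bbL_N\leq 0$ one gets $\|\l \cR^N_\l\|_{L^2\to L^2}\leq 1$ by spectral calculus, while reversibility and conservativity make $\l \cR^N_\l$ a positivity--preserving contraction on $L^1(\nu^N_\o)$; hence $\|G^\l_N\|_{L^p(\nu^N_\o)}\leq \l^{-1}\|G^\l\|_{L^p(\nu^N_\o)}$ for $p=1,2$. Because $G^\l\in C^\infty_c(\bbR^d)$ is bounded with compact support, $\|G^\l\|_{L^p(\nu^N_\o)}$ is a Riemann sum bounded uniformly in $N$, which gives the second bound of the lemma. The first bound then follows by rewriting $-\bbL_N G^\l_N=G^\l-\l G^\l_N$, so that
\[
(G^\l_N,-\bbL_N G^\l_N)_{\nu^N_\o}=(G^\l_N,G^\l)_{\nu^N_\o}-\l\|G^\l_N\|^2_{L^2(\nu^N_\o)}\leq \|G^\l_N\|_{L^2(\nu^N_\o)}\|G^\l\|_{L^2(\nu^N_\o)},
\]
which is already controlled; and the third bound is immediate from $\bbL_N G^\l_N=\l G^\l_N-G^\l$ and the triangle inequality.

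For the convergence \eqref{sereno1} the key remark is that, by the very definition of $G^\l$ as the restriction of $\l G-\nabla\cdot\cD\nabla G$, the smooth function $G$ solves the continuum resolvent equation $\l G-\nabla\cdot\cD\nabla G=G^\l$. Thus $G^\l_N$ and $G$ are the discrete and the homogenized continuum resolvent at parameter $\l$ applied to the same source $G^\l$, and \eqref{sereno1} is exactly the assertion that the diffusively rescaled operators $\bbL_N$ homogenize to $\nabla\cdot\cD\nabla$ at the level of resolvents, with $\cD$ the matrix of \eqref{varcar}. I would obtain this $L^2(\nu^N_\o)$--convergence, valid for $\bbQ$--a.a.\ $\o$, from the homogenization results of \cite{F}, applied to the source $G^\l$ whose continuum resolvent is $G$.

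To pass from \eqref{sereno1} to the $L^1$--convergence \eqref{sereno2} I would use that $G$ has compact support: fixing $R$ with $\supp G\subset B_R:=\{|x|\leq R\}$, I split $\|G^\l_N-G\|_{L^1(\nu^N_\o)}$ into the contribution of $B_R$ and a tail on $B_R^c$. On $B_R$ the Cauchy--Schwarz inequality gives $\|G^\l_N-G\|_{L^1(B_R)}\leq \nu^N_\o(B_R)^{1/2}\,\|G^\l_N-G\|_{L^2(\nu^N_\o)}$, and since $\nu^N_\o(B_R)$ stays bounded this vanishes by \eqref{sereno1}. For the tail, writing $G^\l_N=\int_0^\infty e^{-\l t}P^N_t G^\l\,dt$, using $\supp G^\l\subset B_{R_0}$ and the symmetry of the transition kernel of the rescaled walk with respect to $\nu^N_\o$, I would bound
\[
N^{-d}\!\!\sum_{x/N\notin B_R}\!|G^\l_N(x/N)|\leq \|G^\l\|_\infty\,N^{-d}\!\!\sum_{y/N\in B_{R_0}}\int_0^\infty e^{-\l t}\,\Pp_{y/N}\bigl(|X^N_t-y/N|>R-R_0\bigr)\,dt,
\]
where $\Pp_{y/N}$ denotes the law of the rescaled walk started at $y/N$. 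The escape probability on the right is controlled, uniformly in $N$ and for $\bbQ$--a.a.\ $\o$, by the quenched Gaussian upper bounds on the heat kernel of the rescaled walk on the supercritical cluster (see \cite{BB}, \cite{F}); together with the factor $e^{-\l t}$ this makes the whole expression at most a constant times a negative power of $R-R_0$, hence small uniformly in $N$. Letting first $N\to\infty$ and then $R\to\infty$ yields \eqref{sereno2}.

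The elementary bounds are immediate from the resolvent structure, so the substantive content is the $L^2$--homogenization \eqref{sereno1}, which I borrow from \cite{F}; the only genuinely new difficulty, caused by working on the infinite cluster rather than a finite torus, is the uniform-in-$N$ spatial tail control needed for \eqref{sereno2}, and this is where I expect the main effort (and the quenched heat-kernel input) to lie.
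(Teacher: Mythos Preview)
Your treatment of the three uniform bounds and of \eqref{sereno1} matches the paper's argument essentially line for line: the paper also pairs the resolvent equation with $G^\l_N$ to control the Dirichlet form and the $L^2$ norm, uses the symmetry of $p^N_t(x,y)$ (equivalently, the $L^1$--contractivity of $\l\cR^N_\l$ you invoke) for the $L^1$ bound, and cites Theorem~2.4(iii) of \cite{F} for the $L^2$--homogenization.

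The genuine divergence is in \eqref{sereno2}. You propose to control the tail $\|G^\l_N\bbI(|u|>R)\|_{L^1(\nu^N_\o)}$ via quenched Gaussian heat--kernel upper bounds for the rescaled walk. The paper instead avoids heat--kernel input entirely by a mass--conservation trick: it sandwiches $-f\leq G^\l\leq F$ with $f,F\in C^\infty_c$ nonnegative, so that by positivity of the resolvent $-f^\l_N\leq G^\l_N\leq F^\l_N$ and it suffices to control the tail of $F^\l_N$ (and $f^\l_N$). For a \emph{nonnegative} source the $L^1$ computation is an equality, $\|F^\l_N\|_{L^1(\nu^N_\o)}\to\l^{-1}\|F\|_{L^1(dx)}=\|H\|_{L^1(dx)}$ with $H$ the continuum resolvent; since the mass on any bounded region converges to that of $H$ by \eqref{sereno1} and Cauchy--Schwarz, the tail mass must converge to the tail mass of $H$, which vanishes as $R\to\infty$. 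This is more elementary and entirely self--contained, using only the homogenization result already invoked. Your route is also correct in principle, but note that \cite{BB} proves an invariance principle, not heat--kernel bounds; you would need Barlow's quenched Gaussian estimates (not among the paper's references), and some care is required since those bounds come with random thresholds depending on the starting point and on $\o$.
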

\begin{proof}
By taking the scalar product with $G^\l_N$  in
  \eqref{gigia} one obtains that
$$ \l \|G^\l_N\|^2_{L^2(\nu^N_\o)} +( G^\l_N,-\bbL_N  G^\l_N)_{\nu^N_\o}=
 ( G^\l_N, G^\l)_{\nu^N_\o} \leq \| G^\l_N\|_{L^2 (\nu^N_\o)}\| G^\l\|_{L^2
 (\nu^N_\o)}\,.
$$
Using that $\sup_N \| G^\l\|_{L^2
 (\nu^N_\o)}<\infty$, from the above expression one easily obtains the
 uniform upper  bounds on $(G^\l_N, -\bbL_N G^\l_N )_{\nu^N_\o}$ and $\|G^\l_N\|_{L^2(\nu^N_\o)}
 $. Since $\sup_{N\geq 1} \|G^\l \|_{ L^2(\nu^N_\o)}<\infty$, by
 difference one obtains the uniform upper  bound on
  $\|\bbL_N G^\l_N\|_{L^2(\nu^N_\o)}
 $.

  In order to estimate $\|G^\l_N\|_{L^1(\nu^N_\o)}
 $ let us write $p^N_t (x,y)$ for the probability  that the random walk
 on $\cC_N(\o)$ with generator $\bbL_N$ and starting point $x$ is
 at site $y$ at time $t$. Then, since the jump rates depend on the
 unoriented bonds, $p^N_t(x,y)=p^N_t (y,x)$. Since
 \begin{equation}\label{viaggio}
 G^\l _N(x)= \sum_{y \in \cC_N(\o)} \int_0
 ^\infty e^{-\l t} p^N_t (x,y) G^\l(y)\,,
 \end{equation}
  for all $x \in \cC_N(\o)$,
the above  symmetry allows  to
 conclude that
 \begin{multline}\label{cortona}
 \| G^\l_N\|_{L^1 (\nu^N_\o)}\leq  \frac{1}{N^d} \sum _{x ,y\in \cC_N(\o)} \int_0
 ^\infty e^{-\l t} p^N_t (x,y)| G^\l(x)|=\\
 \frac{1}{\l N^d} \sum _{x \in
 \cC_N(\o)}
 |G^\l (x)|\rightarrow \frac{1}{\l}\int _{\bbR^d} |G^\l (u)| du< \infty\,.
\end{multline}
Again, since   $\sup_{N\geq 1} \|G^\l \|_{ L^1(\nu^N_\o)}<\infty$,
by
 difference one obtains the uniform upper  bound on
  $\|\bbL_N G^\l_N\|_{L^1(\nu^N_\o)}$.

 The homogenization result \eqref{sereno1} follows from Theorem 2.4 (iii)
in \cite{F}.
Finally, let us consider  \eqref{sereno2}.
Given $\ell >0$, using
Schwarz inequality, one can bound
\begin{multline*}
\|G^\l_N -G\|_{L^1(\nu^N_\o)} \leq\\ \| G^\l_N (u) \bbI( |u|> \ell)
\|_{L^1 (\nu^N_\o)}+\| G(u) \bbI( |u|> \ell) \|_{L^1 (\nu^N_\o)}+c
\ell^{d/2 }  \|G^\l_N-G\|_{L^2(\nu^N_\o)}^{1/2}\,.
\end{multline*}
 Since $G \in C^\infty_c (\bbR^d)$ the second term in
the r.h.s. is zero for $\ell$ large enough.  The last term in the
r.h.s. goes to zero due to \eqref{sereno1}. In order to conclude we
need to show that
\begin{equation}\label{sam}
\varlimsup_{\ell\uparrow \infty} \varlimsup_{N\uparrow \infty } \|
G^\l_N (u) \bbI( |u|> \ell) \|_{L^1 (\nu^N_\o)}=0\,.
\end{equation}
 Since $G^\l\in C^\infty_c
(\bbR^d)$ we can fix nonnegative  functions $F, f\in C^\infty_c
(\bbR^d)$ such that $- f\leq G^\l \leq F$. We call $F^\l_N$,
$f^\l_N$ the solutions in $L^2 (\nu^N_\o)$ of the equations
\begin{align*}
& \l F^\l_N-\bbL_N F^\l_N= F\,, \\
& \l f^\l_N-\bbL_N f^\l_N= f\,,
\end{align*}
respectively.
 From the integral representation \eqref{viaggio}  we derive that $F^\l_N$,
 $f^\l_N$ are nonnegative, and  that
$ -f^\l_N \leq G^\l_N \leq F^\l_N$ on $\cC_N(\o)$. In particular, in
order to prove \eqref{sam} it is enough to prove the same claim with
$F^\l_N$, $f^\l_N$ instead of $G^\l_N$. We give the proof for
$F^\l_N$, the other case is completely similar. Let us define $H \in
C^\infty(\bbR^d)$ as the unique solution in $L^2(dx)$ of the
equation
\begin{equation}\label{parigini}
\l H- \nabla \cdot \cD \nabla H= F\,.
\end{equation}
Again, by a suitable integral representation, we get that $H$ is
nonnegative.
 Applying Schwarz inequality, we can estimate
\begin{multline}\label{anton}
 \| F^\l_N (u) \bbI( |u|> \ell) \|_{L^1 (\nu^N_\o)}=\| F^\l_N
\|_{L^1 (\nu^N_\o)}-\| F^\l_N (u) \bbI( |u|\leq  \ell) \|_{L^1
(\nu^N_\o)}\leq \\
\| F^\l_N \|_{L^1 (\nu^N_\o)}-\| H (u) \bbI( |u|\leq  \ell) \|_{L^1
(\nu^N_\o)}+\| (H (u) - F^\l_N)\bbI( |u|\leq  \ell) \|_{L^1
(\nu^N_\o)}\leq\\ \| F^\l_N \|_{L^1 (\nu^N_\o)}-\| H (u) \bbI(
|u|\leq \ell) \|_{L^1 (\nu^N_\o)}
 +c \ell^{d/2}
\|F^\l_N-H\|_{L^2(\nu^N_\o)}^{1/2}\,.
\end{multline}
Since $F^\l_N$ and $F$ are nonnegative functions, when repeating the
steps in   \eqref{cortona} with $F^\l_N$, $F$  instead of $G^\l_N$,
$G^\l$ respectively, we get the the inequality is an equality and
therefore
$$\| F^\l_N \|_{L^1 (\nu^N_\o)}\rightarrow \l^{-1} \| F
\|_{L^1 (dx)}=\| H \|_{L^1 (dx)}\,.
$$
This observation, the above bound \eqref{anton} and Theorem 2.4
(iii) in \cite{F}  imply that
$$ \varlimsup_{N\uparrow \infty} \| F^\l_N (u) \bbI( |u|> \ell) \|_{L^1
(\nu^N_\o)}\leq \| H (u) \bbI( |u|> \ell) \|_{L^1 (\nu^N_\o)}
$$
At this point it is trivial to derive \eqref{sam} for $F^\l_N$.
\end{proof}
In the rest of this section, we will assume  that $\o$ is a good
conductance field, i.e. the infinite cluster  $\cC(\o)$ is
well--defined and $\o$ satisfies Lemma \ref{meteo}. We recall that
these properties hold $\bbQ$--a.s.

\medskip

 The first step in proving
the hydrodynamic limit consists in showing that the sequence of
processes $ \{ \p^N_t \}_{t \in [0,T]}$ is tight in the Skohorod
space $D([0,T], \cM)$. By adapting the proof of Proposition IV.1.7
in \cite{KL} to the vague convergence, one obtains that it is enough
to show that the sequence of processes $ \{ \p^N_t[G] \}_{t \in
[0,T]}$ is tight in the Skohorod space $D([0,T], \bbR)$ for all $G
\in C_c ^\infty (\bbR)$. A key relation between the zero range
process and the  random walk among random conductances is given by
\begin{equation}\label{terramatta}
N^2 \cL\left( \p^N (\eta)[G]\right)= \frac{1}{N^d} \sum_{x \in
\cC(\o)}  g(\eta (x) ) \bigl( \bbL_N G \bigr)(x/N)\,.
\end{equation}
The check of \eqref{terramatta} is trivial and based on integration
by parts. At this point, due to the disorder given by the
conductance field $\o$, a second integration by parts as usually
done for gradient systems (cf. \cite{KL}[Chapter 5]) would be
useless since the resulting object would remain wild. A way to
overcome this technical problem is given by the method of the
corrected empirical measure: as explained below,  the sequence of
processes $ \{ \p^N_t[G] \}_{t \in [0,T]}$ behaves asymptotically as
$ \{ \p^N_t[G^\l_N] \}_{t \in [0,T]}$, thus the tightness of the
former follows from the tightness of the latter. We need some care
since the total number of particles can be infinite, hence it is not
trivial that the process $ \{ \p^N_t[G^\l_N] \}_{t \in [0,T]}$ is
well defined.

We start with a technical lemma which will be frequently used:
\begin{Le}\label{lucus}
Let   $H$ be a nonnegative function on $\cC_N(\o)$ belonging to $L^1
(\nu^N_\o)\cap L^2 (\nu^N_\o)$ and let $k\geq 0$. Then
 \begin{equation}\label{aprire}
  \bbP_{\o, \mu^N}\Big(
  \sup_{0 \leq t \leq T} \frac{1}{N^d} \sum _{x \in \cC(\o)}
\eta_t(x)^k H(x/N)>A \Big)\leq A^{-1}  c (k,\rho_0) \sqrt{
\|H\|^2_{L^1(\nu^N_\o)}+\|H\|^2_{L^2(\nu^N_\o)}}
\end{equation}
for all $A>0$.
\end{Le}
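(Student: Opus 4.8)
The plan is to exploit attractiveness to dominate the true process by the stationary one, and then to control the time–supremum through the Dynkin martingale together with a Dirichlet–form bound.

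First I would use the two standing hypotheses on $\mu^N$: since $\mu^N$ is stochastically dominated by $\nu_{\rho_0}$ and $g$ is nondecreasing, the coupling recalled in Section~\ref{lavinia_giorgia} produces the stationary process $\zeta_t$ started from the reversible measure $\nu_{\rho_0}$ with $\eta_t(x)\le\zeta_t(x)$ for all $x,t$ almost surely. Because $H\ge0$ and $k\ge0$, this gives
\[
\sup_{0\le t\le T}\frac{1}{N^d}\sum_{x\in\cC(\o)}\eta_t(x)^k H(x/N)\le\sup_{0\le t\le T}V(\zeta_t),\qquad V(\eta):=\frac{1}{N^d}\sum_{x\in\cC(\o)}\eta(x)^k H(x/N),
\]
so it suffices to bound $\bbE\bigl[\sup_{t\le T}V(\zeta_t)\bigr]$ and apply Markov's inequality. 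Using $H\in L^1(\nu^N_\o)\cap L^2(\nu^N_\o)$ and the finiteness of all moments of $\nu_{\rho_0}$, one checks that $V\in L^2(\nu_{\rho_0})$ and that the Dirichlet form $(V,-N^2\cL V)_{\nu_{\rho_0}}$ is finite, which legitimizes the decompositions below (after a routine truncation of the sum over $\cC(\o)$).

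Next I would write the Dynkin decomposition $V(\zeta_t)=V(\zeta_0)+A_t+M_t$, with $A_t:=\int_0^t N^2\cL V(\zeta_s)\,ds$ and $M_t$ a mean–zero martingale, and bound
\[
\bbE\Big[\sup_{0\le t\le T}V(\zeta_t)\Big]\le\bbE[V(\zeta_0)]+\bbE\Big[\sup_{0\le t\le T}|A_t|\Big]+\bbE\Big[\sup_{0\le t\le T}|M_t|\Big].
\]
Stationarity gives $\bbE[V(\zeta_0)]=\nu_{\rho_0}(\eta(0)^k)\,\|H\|_{L^1(\nu^N_\o)}$, which produces the $\|H\|_{L^1(\nu^N_\o)}$ contribution. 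For the martingale, its predictable quadratic variation satisfies $\bbE[\langle M\rangle_T]=2T\,(V,-N^2\cL V)_{\nu_{\rho_0}}$; expanding $(V(\eta^{x,x+e})-V(\eta))^2$ into the two local increments at $x$ and $x+e$, using $\o\le c_0$ and the finiteness of $\nu_{\rho_0}\!\bigl(g(\eta(0))((\eta(0)-1)^k-\eta(0)^k)^2\bigr)$ and of $\nu_{\rho_0}(g(\eta(0)))\,\nu_{\rho_0}\!\bigl(((\eta(0)+1)^k-\eta(0)^k)^2\bigr)$ (all finite since $g$ grows at most linearly), one gets
\[
(V,-N^2\cL V)_{\nu_{\rho_0}}\le c(k,\rho_0)\,N^{2-d}\,\|H\|^2_{L^2(\nu^N_\o)}\le c(k,\rho_0)\,\|H\|^2_{L^2(\nu^N_\o)},
\]
the last step using $d\ge2$. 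Doob's inequality then yields $\bbE[\sup_{t\le T}|M_t|]\le c(k,\rho_0)\sqrt{T}\,\|H\|_{L^2(\nu^N_\o)}$.

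The crux is the drift term $A_t$. The pointwise value $N^2\cL V$ is of order $N^2$, so estimating $\sup_t|A_t|$ by $\int_0^T|N^2\cL V(\zeta_s)|\,ds$ is hopeless; the gain must come from the fact that only the \emph{gradient content} of $V$, namely $(V,-N^2\cL V)_{\nu_{\rho_0}}=O(\|H\|^2_{L^2(\nu^N_\o)})$, is small. Since $\nu_{\rho_0}$ is reversible, I would invoke the forward–backward martingale decomposition of Lyons–Meyer–Zheng, writing $A_t$ as one half the difference of the forward martingale and the time–reversed backward martingale, each of quadratic variation controlled by $(V,-N^2\cL V)_{\nu_{\rho_0}}$, so that
\[
\bbE\Big[\sup_{0\le t\le T}|A_t|\Big]\le c\,\sqrt{T\,(V,-N^2\cL V)_{\nu_{\rho_0}}}\le c(k,\rho_0)\sqrt{T}\,\|H\|_{L^2(\nu^N_\o)}.
\]
Collecting the three bounds gives $\bbE[\sup_{t\le T}V(\zeta_t)]\le c(k,\rho_0)\bigl(\|H\|_{L^1(\nu^N_\o)}+\|H\|_{L^2(\nu^N_\o)}\bigr)\le c(k,\rho_0)\sqrt{\|H\|^2_{L^1(\nu^N_\o)}+\|H\|^2_{L^2(\nu^N_\o)}}$, and Markov's inequality together with the domination of the first step yields \eqref{aprire}. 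I expect the reversible forward–backward control of $A_t$ to be the main obstacle: it is the step where the reversibility of $\nu_{\rho_0}$ and the dimension constraint $d\ge2$ genuinely enter, and where the infinitely many particles force one to justify the decomposition by truncation and a limiting argument rather than a direct computation.
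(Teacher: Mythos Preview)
Your argument is correct and reaches the same conclusion, but the paper takes a shorter path: after the stochastic--domination reduction to the stationary process (identical to your first step), it simply invokes the Kipnis--Varadhan maximal inequality for reversible Markov processes (Theorem~11.1 in Appendix~1 of \cite{KL}), namely
\[
\bbP_{\o,\nu_{\rho_0}}\Big(\sup_{0\le t\le T} F(\eta_t)>A\Big)\le \frac{e}{A}\sqrt{\nu_{\rho_0}(F^2)+T\,\nu_{\rho_0}(F,-N^2\cL F)}\,,
\]
and then verifies the two required estimates $\nu_{\rho_0}(F^2)\le c(k,\rho_0)\|H\|_{L^1(\nu^N_\o)}^2$ and $\nu_{\rho_0}(F,-N^2\cL F)\le c(k,\rho_0)\|H\|_{L^2(\nu^N_\o)}^2$ (the latter using $d\ge2$, exactly as you do). Your Dynkin decomposition followed by the Lyons--Meyer--Zheng forward--backward control of the additive functional $A_t$ is essentially a reconstruction by hand of the proof of that maximal inequality, so the substance is the same; the paper just cites the packaged result. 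Incidentally, the Remark following the lemma in the paper sketches a second route via \cite{CLO} that bounds $\bbE\bigl[\sup_t(F(\eta_t)-F(\eta_0))^2\bigr]$ by $cT\,\nu_{\rho_0}(F,-N^2\cL F)$, which is even closer in spirit to your forward--backward estimate. One small simplification of your write-up: rather than splitting $V(\zeta_t)-V(\zeta_0)$ into $A_t+M_t$ and then re-expressing $A_t$ via forward--backward martingales, you can apply the Lyons--Zheng decomposition directly to $V(\zeta_t)-V(\zeta_0)$ and avoid handling $M_t$ and $A_t$ separately.
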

\begin{proof}
We use a maximal inequality for reversible Markov processes due to
Kipnis and Varadhan \cite{KV} (cf. Theorem 11.1 in Appendix 1 of
\cite{KL}).
 Let us set
 \begin{equation}
 \label{miele_mele}
 F(\eta)=\frac{1}{N^d} \sum _{x \in \cC(\o)}
\eta(x)^k H(x/N)\,,
\end{equation} supposing first that $H$ has bounded support.
Note that $F(\eta)\leq F(\eta')$ if $\eta(x)\leq \eta' (x)$ for all
$x \in \cC(\o)$. Hence by the stochastic domination assumption, it
is enough to prove \eqref{aprire} with $\bbP_{\o, \nu_{\rho_0}}$
(always referred to the diffusively accelerated process) instead of
$\bbP_{\o,\mu^N}$. We recall that $\nu_{\rho_0}$ is reversible
w.r.t. the the zero range process.  Moreover
\begin{equation}\label{cricri}
 \nu_{\rho_0} ( F^2) =\frac{1}{N^{2 d}} \sum
_{x,y \in \cC(\o)}  H(x/N) H(y/N) \nu_{\rho_0} ( \eta(x)^k
\eta(y)^k)\leq c(k,\rho_0) \| H\|^2_{L^1 (\nu^N_\o)}\,,
\end{equation}
while \begin{multline*} \nu_{\rho_0} (F, - N^2\cL F) =
\frac{N^2}{N^{2d} } \sum_{x,y\in \cC(\o)} H(x/N) H(y/N) \nu_{\rho_0}
( \eta (x)^k, -\cL \eta (y)^k)\leq \\c(k,\rho_0) \frac{N^2}{N^{2d} }
 \sum_{x\in \cC(\o)} \sum_{
\substack{y \in \cC(\o):\\ |x-y|=1} } H(x/N) H(y/N)\,.
\end{multline*}
Using the bound $H(x/N) H(y/N)\leq H(x/N)^2+ H(y/N)^2$ and the fact
that $d\geq 2$, we conclude that \begin{equation}\label{cracra}
\nu_{\rho_0} (F, - N^2\cL F)\leq c(k, \rho_0) \| H\|^2_{L^2
(\nu^N_\o)}\,.
\end{equation}
By the result of Kipnis and Varadhan it holds
\begin{equation}
  \bbP_{\o, \nu_{\rho_0}}\Big(
  \sup_{0 \leq t \leq T} \frac{1}{N^d} \sum _{x \in \cC(\o) }
\eta_t(x)^k H(x/N)>A \Big)\leq\frac{e}{A}
 \sqrt{ \nu_{\rho_0} ( F^2)+ T
 \nu_{\rho_0} (F, - N^2\cL F)}
 \,.
\end{equation}
 At this point the thesis follows from the above bounds
 \eqref{cricri} and \eqref{cracra}. In order to remove the
 assumption that $H$ is local, it is enough to apply the result to
 the sequence of functions $H_{n}(x):=H(x) \chi ( |x|\leq n)$ and
 then apply the Monotone Convergence Theorem as $n\uparrow \infty$.
 \end{proof}

 \begin{Rem} We observe that the arguments used in the proof of Lemma 4.3 in \cite{CLO} imply that, given a function
 $H$ of bounded support and defining $F$ as in \eqref{miele_mele}, it holds
 $$
 \bbE_{\o, \nu_{\rho_0} } \Big( \sup_{0\leq t\leq T} \bigl(F(\eta_t)-F(\eta_0) \bigr) ^2\Big)\leq c
 T  \nu_{\rho_0} (F, -N^2 \cL F)\,.
 $$
In particular, it holds
$$
\bbE_{\o, \nu_{\rho_0} } \Big( \sup_{0\leq t\leq T} F(\eta_t)  ^2\Big)\leq c \nu_{\rho_0} (F^2) +c T
\nu_{\rho_0} (F, -N^2 \cL F)\,.
 $$
Using the bounds \eqref{cricri} and
 \eqref{cracra}, the domination assumption and the Monotone Convergence Theorem, under the same assumption of Lemma
 \ref{lucus}  one obtains
 \begin{equation}\label{aprire_aia}
  \bbE_{\o, \mu^N}\Big(
  \sup_{0 \leq t \leq T}\Big[ \frac{1}{N^d} \sum _{x \in \cC(\o)}
\eta_t(x)^k H(x/N) \Big]^2 \Big)\leq  c (k,\rho_0)\Big[
\|H\|^2_{L^1(\nu^N_\o)}+\|H\|^2_{L^2(\nu^N_\o)} \Big]\,.
\end{equation}
Using afterwards the  Markov inequality, one concludes that
\begin{equation}\label{ghigginiaia}
  \bbP_{\o, \mu^N}\Big(
  \sup_{0 \leq t \leq T} \frac{1}{N^d} \sum _{x \in \cC(\o)}
\eta_t(x)^k H(x/N)>A \Big)\leq c (k,\rho_0)A^{-2}\Big[
\|H\|^2_{L^1(\nu^N_\o)}+\|H\|^2_{L^2(\nu^N_\o)}\Big]
\end{equation}
for all $A>0$. The use of \eqref{aprire} or \eqref{ghigginiaia} in the rest of the discussion is completely
equivalent.
  \end{Rem}

Due to Lemma \ref{lucus} and Lemma \ref{meteo} the process $ \{
\p^N_t[G^\l_N] \}_{t \in [0,T]}$ is well defined w.r.t $\bbE_{\o,
\mu^N}$. Let us explain why this process is a good approximation of
the process  $ \{ \p^N_t[G] \}_{t \in [0,T]}$:
\begin{Le}\label{bisio}
Let $G \in C^\infty_c (\bbR^d)$. Then, given $\d>0$, it holds
\begin{equation}
\lim_{N\uparrow \infty }\bbP_{\o, \mu^N}\Big(  \sup_{0\leq t \leq T}
\bigl| \p^N_t[G^\l_N]- \p^N_t[G]\bigr|>\d\Big)=0\,.
\end{equation}
\end{Le}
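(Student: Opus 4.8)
The plan is to recognize the difference $\p^N_t[G^\l_N]-\p^N_t[G]$ as a linear observable of the configuration and then to invoke the maximal inequality of Lemma \ref{lucus}. Writing $H:=G^\l_N-G$, one has
\begin{equation*}
\p^N_t[G^\l_N]-\p^N_t[G]=\frac{1}{N^d}\sum_{x\in\cC(\o)}\eta_t(x)\,H(x/N),
\end{equation*}
which is exactly the observable $F(\eta_t)$ appearing in Lemma \ref{lucus} for $k=1$, except that here the weight $H$ is not of definite sign.

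To deal with the sign I would bound
\begin{equation*}
\sup_{0\leq t\leq T}\bigl|\p^N_t[G^\l_N]-\p^N_t[G]\bigr|\leq \sup_{0\leq t\leq T}\frac{1}{N^d}\sum_{x\in\cC(\o)}\eta_t(x)\,|H(x/N)|,
\end{equation*}
so that the relevant weight becomes $|H|=|G^\l_N-G|$, which is nonnegative. By Lemma \ref{meteo} the corrector $G^\l_N$ belongs to $L^1(\nu^N_\o)\cap L^2(\nu^N_\o)$, and $G\in C^\infty_c(\bbR^d)$ trivially lies in both spaces, so $|H|\in L^1(\nu^N_\o)\cap L^2(\nu^N_\o)$ and Lemma \ref{lucus} applies (the stochastic domination of $\mu^N$ by $\nu_{\rho_0}$ being part of the standing hypotheses of Theorem \ref{annibale}).

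Applying Lemma \ref{lucus} with $k=1$ and nonnegative weight $|H|$, and noting that $\||H|\|_{L^p(\nu^N_\o)}=\|H\|_{L^p(\nu^N_\o)}$, I obtain
\begin{equation*}
\bbP_{\o,\mu^N}\Bigl(\sup_{0\leq t\leq T}\bigl|\p^N_t[G^\l_N]-\p^N_t[G]\bigr|>\d\Bigr)\leq \d^{-1}c(1,\rho_0)\sqrt{\|H\|^2_{L^1(\nu^N_\o)}+\|H\|^2_{L^2(\nu^N_\o)}}.
\end{equation*}
It then remains to let $N\uparrow\infty$: by the homogenization estimates \eqref{sereno1} and \eqref{sereno2} of Lemma \ref{meteo}, both $\|G^\l_N-G\|_{L^1(\nu^N_\o)}$ and $\|G^\l_N-G\|_{L^2(\nu^N_\o)}$ tend to zero, so the right--hand side vanishes and the claim follows.

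There is no genuine obstacle here, since the two substantial ingredients---the Kipnis--Varadhan maximal inequality packaged in Lemma \ref{lucus}, and the $L^1$/$L^2$ homogenization of the corrector $G^\l_N$ in Lemma \ref{meteo}---are already in place. The only point requiring a little care is that $H$ is not of constant sign, which is resolved by passing to $|H|$ before invoking the maximal inequality; note that the comparatively delicate $L^1$--convergence \eqref{sereno2}, needed precisely because the total number of particles may be infinite, has also already been established.
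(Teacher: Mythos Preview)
Your proof is correct and follows exactly the paper's approach: apply Lemma \ref{lucus} to the weight $G^\l_N-G$ and then invoke the $L^1$/$L^2$ convergence of Lemma \ref{meteo}. The only difference is that you make explicit the passage to $|H|$ to meet the nonnegativity hypothesis of Lemma \ref{lucus}, which the paper leaves implicit.
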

\begin{proof}
By Lemma \ref{lucus} we can bound the above probability by
$$ c(\rho_0) \d^{-1}\sqrt{  \| G^\l_N -G\|^2_{L^1(\nu^N_\o)}+ \| G^\l_N -G\|^2_{L^2(\nu^N_\o)}}\,.$$
At this point the thesis follows from Lemma \ref{meteo}.
\end{proof}
Due to the above Lemma, in order to prove the tightness of $ \{
\p^N_t[G] \}_{t \in [0,T]}$ it is enough to prove the tightness of $
\{ \p^N_t[G^\l_N] \}_{t \in [0,T]}$. Now we can go on with the
standard method based on martingales and Aldous criterion for
tightness (cf. \cite{KL}[Chapter 5]), but again we need to handle
with care our objects due to the risk of explosion. We fix a good
realization $\o$  of the conductance field.  Due to  Lemma
\ref{meteo}, Lemma  \ref{lucus} and the bound $g(k) \leq g^* k$, we
conclude that
  the process $\{M^{N}_t\}_{0\leq t \leq T}$ where
\begin{equation}\label{pranzetto}
M^{N}_t(G):= \p^N_t (G^\l_N)- \p^{N}_0 (G^\l_N ) -\int _0^t
\frac{1}{N^d} \sum_{x \in \cC (\o)} g( \eta _{s} (x) ) \bbL_N G^\l
_N (x/N) ds\,,
\end{equation}
is well defined  $\bbP_{\o, \mu^N}$--a.s.
\begin{Le}\label{aldo}
Given $\d>0$,
\begin{equation}\label{volo} \lim_{N\uparrow \infty} \bbP_{\o,
\mu^N} \Big( \sup_{0 \leq t \leq T} |M^{N}_t (G) |\geq \d \Big)=0\,.
\end{equation}
\end{Le}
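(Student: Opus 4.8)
The plan is to identify $M^N_\cdot(G)$ with the Dynkin martingale of the observable $f(\eta):=\p^N(\eta)[G^\l_N]$ under the accelerated generator $N^2\cL$. Applying relation \eqref{terramatta} with $G^\l_N$ in place of $G$ shows that the integrand in \eqref{pranzetto} equals $N^2\cL f(\eta_s)$, so that
\[
M^N_t(G)=f(\eta_t)-f(\eta_0)-\int_0^t N^2\cL f(\eta_s)\,ds
\]
is a mean--zero $L^2$ martingale. By Doob's maximal inequality,
\[
\bbP_{\o,\mu^N}\Big(\sup_{0\leq t\leq T}|M^N_t(G)|\geq\d\Big)\leq\frac{1}{\d^2}\,\bbE_{\o,\mu^N}\big[(M^N_T(G))^2\big]=\frac{1}{\d^2}\,\bbE_{\o,\mu^N}\big[\langle M^N(G)\rangle_T\big]\,,
\]
so it suffices to prove that the expected quadratic variation vanishes as $N\uparrow\infty$.

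Since a jump $\eta\to\eta^{x,x+e}$ changes $f$ by $N^{-d}\big(G^\l_N((x+e)/N)-G^\l_N(x/N)\big)$, the quadratic variation is the time integral of the carr\'e du champ:
\[
\langle M^N(G)\rangle_T=\int_0^T N^{2-2d}\!\!\sum_{\substack{x\in\cC(\o)\\ e\in\cB:\,x+e\in\cC(\o)}}\!\! g(\eta_s(x))\,\o(x,x+e)\big[G^\l_N((x+e)/N)-G^\l_N(x/N)\big]^2\,ds\,.
\]
Because $g$ is nondecreasing and $\mu^N$ is stochastically dominated by the reversible measure $\nu_{\rho_0}$, the process started from $\mu^N$ is dominated by the stationary process started from $\nu_{\rho_0}$, whence $\bbE_{\o,\mu^N}[g(\eta_s(x))]\leq\nu_{\rho_0}(g(\eta(x)))=\phi(\rho_0)$, uniformly in $s$ and $x$. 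Taking expectations, bounding $g(\eta_s(x))$ in this way, and rewriting the resulting double sum through the Dirichlet form identity recorded before Lemma \ref{meteo}, the sum equals, up to a universal constant, $N^{d-2}(G^\l_N,-\bbL_N G^\l_N)_{\nu^N_\o}$, so that one gains an overall factor $N^{-d}$:
\[
\bbE_{\o,\mu^N}\big[\langle M^N(G)\rangle_T\big]\leq\frac{c\,\phi(\rho_0)\,T}{N^d}\,(G^\l_N,-\bbL_N G^\l_N)_{\nu^N_\o}\leq\frac{c\,\phi(\rho_0)\,c(G,\l)\,T}{N^d}\,,
\]
using the uniform bound of Lemma \ref{meteo} in the last step. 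The right-hand side tends to $0$, which yields \eqref{volo}.

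The main obstacle is not this computation but its rigorous justification in infinite volume: the observable $f(\eta)=\p^N(\eta)[G^\l_N]$ is \emph{not} local, since $G^\l_N$ is supported on the whole rescaled cluster, and the total number of particles may be infinite, so neither the Dynkin decomposition, nor the $L^2$ martingale property, nor the carr\'e du champ identity are immediately available. I would handle this by truncation: replace $G^\l_N$ by $G^\l_N\,\bbI(|\cdot|\leq n)$, for which the associated observable is local and all the above steps are standard, and then let $n\uparrow\infty$. The a priori estimates of Lemmas \ref{meteo} and \ref{lucus}, together with the maximal bound \eqref{aprire_aia}, control the truncation errors uniformly in $N$ via monotone and dominated convergence, so the estimate on the quadratic variation passes to the limit and the conclusion follows.
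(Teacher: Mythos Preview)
Your proposal is correct and follows essentially the same approach as the paper: Doob's inequality applied to the Dynkin martingale, with the expected quadratic variation bounded via stochastic domination by $\nu_{\rho_0}$ and the Dirichlet form estimate of Lemma~\ref{meteo}, all made rigorous through the truncation $G^\l_{N,n}:=G^\l_N\,\bbI(|\cdot|\leq n)$ that you describe at the end. The paper organizes the argument in the opposite order (truncate first, compute, then show $M^{N,n}_t(G)\to M^N_t(G)$ via Lemma~\ref{lucus}), and bounds $g(\eta(x))\leq g^*\eta(x)$ rather than using $\nu_{\rho_0}(g)=\phi(\rho_0)$ directly, but these are cosmetic differences.
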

\begin{proof}
Given $n\geq 1$, we define the cut--off function $G^\l_{N,n} :
\cC_N(\o) \rightarrow \bbR$ as $ G^\l_{N,n} (x) =G^\l_N (x) \bbI
(|x|\leq n)$.
 Then $G^\l_{N,n}$
is a local function and by the results of \cite{A} (together with
the stochastic domination assumption) we know that
$$
M^{N,n}_t(G):= \p^N_t (G^\l_{N,n} )- \p^{N}_0 (G^\l_{N,n} ) -\int
_0^t \frac{1}{N^d} \sum_{x \in \cC (\o)} g( \eta _{s } (x) ) \bbL_N
G^\l _{N,n} (x/N) ds
$$
is an $L^2$--martingale of quadratic variation
$$ < M^{N,n} _t (G) >= \int _0 ^t \frac{N^2}{N^{2d}} \sum _{x \in \cC(\o)} \sum _{
\substack{ y \in \cC(\o): \\|x-y|=1}} g (\eta_{s } (x) )\o(x,y)
\bigl[ G^\l_{N,n}(y/N) -G^\l_{N,n} (x/N) \bigr]^2ds\,.
$$
Note that, by the stochastic domination assumption
 and the bound $g(k) \leq g^* k$,
\begin{multline}\label{tenda}
\bbE_{\o, \mu^N} \bigl( <M^{N,n}_t (G)
>\bigr)\leq \\g^* \int _0 ^t \frac{N^2}{N^{2d}} \sum _{x \in \cC(\o)} \sum _{
\substack{ y \in \cC(\o): \\|x-y|=1}}\bbE_{\o, \nu_{\rho_0} }
\bigl[\eta_s (x) \bigr]\o(x,y) \bigl[ G^\l_{N,n}(y/N) -G^\l_{N,n}
(x/N) \bigr]^2ds \\
=g^* \rho_0 t N^{-d}( G^\l_{N,n}, -\bbL _N G^\l _{N,n}
)_{\nu^N_\o}\,.
\end{multline}
By Doob's inequality and \eqref{tenda}, we conclude that
\begin{multline}\label{dandrea}
 \bbP_{\o, \mu^N} \Big( \sup_{0 \leq t \leq T} |M^{N,n}_t (G) |\geq
\d \Big)\leq \frac{c}{ \d^2} \bbE_{\o, \mu^N} \bigl( |M^{N,n}_T (G)
|^2 \bigr) \leq\\
  \frac{ c g^* T \rho_0}{
\d^2 N^d }( G^\l_{N,n}, -\bbL _N G^\l _{N,n} )_{\nu^N_\o}\leq
\frac{c' g^* T \rho_0}{ \d^2 N^d }\,.
\end{multline}
Above  we have used that    $\lim_{n\uparrow \infty }( G^\l_{N,n},
-\bbL _N G^\l _{N,n} )_{\nu^N_\o}=( G^\l_N, -\bbL _N G^\l _N
)_{\nu^N_\o} \leq c(\l)$ (see Lemma \ref{meteo}).

 The above process $\{M^{N,n}_t(G)\}_{t\in [0,T]}$ is a
good approximation of $\{M^{N}_t(G)\}_{t\in [0,T]}$ as $n\uparrow
\infty$. Indeed, it holds
\begin{equation}\label{ale}
\lim _{n\uparrow \infty} \bbP_{\o,\mu^N} \Big( \sup _{0\leq t \leq
T}|M^{N,n}_t(G)- M^N_t(G)|>\d\Big)=0\,, \qquad \d>0\,.
\end{equation}
Indeed, since $\| G^\l _{N,n} -G^\l_N\|_{ L^1(\nu^N_\o),
L^2(\nu^N_\o) } $ and $\| \bbL_N G^\l _{N,n} -\bbL_N G^\l_N\|_{
L^1(\nu^N_\o), L^2(\nu^N_\o) } $ converge to zero as $N\uparrow
\infty$ and since $g(k) \leq g^* k$, the above claim follows from
Lemma \ref{lucus}.

\medskip
 At this point,
\eqref{ale} and \eqref{dandrea} imply \eqref{volo}.
\end{proof}

Let us prove the tightness of $\{\p^N_t [G^\l_N ] \}_{t \in [0,T]}$
using Aldous criterion (cf. Proposition 1.2 and Proposition 1.6 in
Section 4 of \cite{KL}):
\begin{Le}\label{formentin}
Given $G \in C^\infty_c (\bbR^d)$, the sequence of processes
$\{\p^N_t [G^\l_N ] \}_{t \in [0,T]}$ is tight in $D([0,T], \bbR)$.
\end{Le}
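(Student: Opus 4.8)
The plan is to apply Aldous' criterion for tightness in $D([0,T],\bbR)$ to the sequence $\{\p^N_t[G^\l_N]\}_{t\in[0,T]}$. Recall that this criterion has two ingredients: first, a control on the law of $\p^N_t[G^\l_N]$ at each fixed time (so that the family is tight in $\bbR$ for each $t$), and second, an estimate on the increments over small time intervals, uniformly in stopping times. The natural starting point is the martingale decomposition already set up in \eqref{pranzetto}, which expresses $\p^N_t[G^\l_N]$ as the sum of the initial value $\p^N_0[G^\l_N]$, a drift integral, and the martingale $M^N_t(G)$. The whole point of having introduced the corrected function $G^\l_N$ is that the drift term is not wild: since $\bbL_N G^\l_N = \l G^\l_N - G^\l$ by \eqref{gigia}, the integrand $\frac{1}{N^d}\sum_x g(\eta_s(x)) \bbL_N G^\l_N(x/N)$ can be rewritten in terms of $G^\l_N$ and $G^\l$, both of which lie in $L^1(\nu^N_\o)\cap L^2(\nu^N_\o)$ with norms bounded uniformly in $N$ by Lemma \ref{meteo}.

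First I would verify tightness at each fixed time. Using $g(k)\leq g^* k$ together with Lemma \ref{lucus} applied to $H=|G^\l_N|$ (with $k=1$), one controls $\sup_{0\leq t\leq T}|\p^N_t[G^\l_N]|$ in probability, uniformly in $N$, by the quantity $\sqrt{\|G^\l_N\|_{L^1(\nu^N_\o)}^2+\|G^\l_N\|_{L^2(\nu^N_\o)}^2}$, which is bounded in $N$ by Lemma \ref{meteo}. This immediately gives compact containment. Next I would handle the increments. For stopping times $\tau\leq T$ and small $\theta>0$, I would bound $\bbE_{\o,\mu^N}|\p^N_{\tau+\theta}[G^\l_N]-\p^N_\tau[G^\l_N]|$ by splitting into the martingale increment and the drift increment. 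For the drift part, writing the integrand as above and applying Lemma \ref{lucus} (or \eqref{aprire_aia}) to $H=|G^\l_N|$ and $H=|G^\l|$, one bounds $\bbE_{\o,\mu^N}\int_\tau^{\tau+\theta}|\cdots|\,ds \leq \theta\, c(\l,G)$, which vanishes as $\theta\downarrow 0$ uniformly in $N$. For the martingale part, I would use the quadratic-variation estimate from the proof of Lemma \ref{aldo}: by the optional stopping/Doob argument there, $\bbE_{\o,\mu^N}|M^N_{\tau+\theta}(G)-M^N_\tau(G)|^2 = \bbE_{\o,\mu^N}\bigl(\langle M^N_{\tau+\theta}\rangle - \langle M^N_\tau\rangle\bigr)$, which by \eqref{tenda} is bounded by $g^*\rho_0\,\theta\, N^{-d}(G^\l_N,-\bbL_N G^\l_N)_{\nu^N_\o}\leq c\,\theta$ uniformly in $N$, again using the uniform Dirichlet-form bound of Lemma \ref{meteo}.

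The one point requiring genuine care — and the step I expect to be the main obstacle — is that all of the above must be carried out for the process $\{M^N_t(G)\}$ built from the non-local function $G^\l_N$, whereas the clean martingale identities (the quadratic variation formula, Doob's inequality) were established in Lemma \ref{aldo} only for the cut-off, compactly supported approximations $G^\l_{N,n}$. The resolution is to run all estimates at the level of $G^\l_{N,n}$, where everything is justified by the construction of \cite{A}, and then pass to the limit $n\uparrow\infty$ using the convergence \eqref{ale} together with the $L^1$- and $L^2$-convergence of $G^\l_{N,n}\to G^\l_N$ and $\bbL_N G^\l_{N,n}\to\bbL_N G^\l_N$ (invoked in the proof of Lemma \ref{aldo}). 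Since the bounds obtained for $M^{N,n}$ are uniform in $n$ (the Dirichlet forms converge monotonically, as noted after \eqref{dandrea}), they persist in the limit. Once both Aldous conditions are verified with constants independent of $N$, tightness of $\{\p^N_t[G^\l_N]\}_{t\in[0,T]}$ in $D([0,T],\bbR)$ follows from Proposition 1.2 and Proposition 1.6 in Section 4 of \cite{KL}.
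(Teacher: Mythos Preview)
Your approach is correct and follows the same overall strategy as the paper: Aldous' criterion via the martingale decomposition \eqref{pranzetto}, with the drift controlled through Lemma \ref{lucus} and Lemma \ref{meteo}.

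The one place where you diverge from the paper is the treatment of the martingale increment. You propose bounding $\bbE_{\o,\mu^N}|M^N_{\t+\theta}(G)-M^N_\t(G)|^2$ by the expected quadratic-variation increment, obtaining a bound of order $\theta$ (uniformly in $N$) via \eqref{tenda}, and you correctly flag that this must be done at the level of the truncations $M^{N,n}$ and then passed to the limit using \eqref{ale}. This works, but the paper takes a shorter route: since Lemma \ref{aldo} already gives $\sup_{0\leq s\leq T}|M^N_s(G)|\to 0$ in probability as $N\uparrow\infty$, the increment bound
\[
\bbP_{\o,\mu^N}\bigl(|M^N_{\t+\theta}(G)-M^N_\t(G)|>\e\bigr)\leq \bbP_{\o,\mu^N}\bigl(\sup_{0\leq s\leq T}|M^N_s(G)|>\e/2\bigr)\to 0
\]
follows immediately, with no need to revisit the quadratic variation or the cut-off argument. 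Your route is the more standard one for Aldous' criterion and would be necessary if the martingale did not vanish; here the paper exploits the fact that it does.
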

\begin{proof} Fix $\theta >0$ and
uppose  that $\t$ is a stopping time w.r.t. the canonical filtration
bounded by $T$. With some abuse of notation we write $\t+\theta $
for the quantity $\min \{\t+\theta,T\}$. Then, given $\e>0$, by
Lemmata \ref{meteo} and \ref{lucus},
\begin{multline}\label{fresco} \bbP_{\o, \mu^N}\bigl( \Big |\int _\t^{\t+\theta}
\frac{1}{N^d} \sum_{x \in \cC
(\o)} g( \eta _{s} (x) ) \bbL_N G^\l _N (x/N) ds\Big|>\e\Big)\leq\\
\bbP_{\o, \mu^N}\bigl( \theta g^* \sup_{s \in [0,T] }  \frac{1}{N^d}
\sum_{x \in \cC (\o)} \eta _{s} (x)  |\bbL_N G^\l _N (x/N)| ds
>\e\Big)\leq C g^* \theta /\e\,.
\end{multline}
In particular,
\begin{equation}\label{ciclo1}
\lim_{\g \downarrow 0} \limsup_{N\uparrow \infty} \sup_{\t, \theta
\in [0, \g]} \bbP_{\o, \mu^N}\bigl( \Big |\int _\t^{\t+\theta}
\frac{1}{N^d} \sum_{x \in \cC (\o)} g( \eta _{s} (x) ) \bbL_N G^\l
_N (x/N) ds\Big|>\e\Big)=0 \,.
\end{equation}
An estimate similar to \eqref{fresco} implies that
\begin{equation}\label{ciclo2}
\lim _{\e \uparrow \infty }\sup_{N\geq 1}   \bbP_{\o, \mu^N}\bigl(
\Big |\int _0 ^t \frac{1}{N^d} \sum_{x \in \cC (\o)} g( \eta _{s}
(x) ) \bbL_N G^\l _N (x/N) ds\Big|>\e\Big)=0 \,.
\end{equation}
Let us now come back to Lemma \ref{aldo}. Let $\t, \theta$ as above.
 Then
 \begin{equation}\label{ciclo3} \limsup _{N\uparrow \infty } \bbP_{\o, \mu^N}\bigl(  |
M^N_{\t+\theta}-M^N_\t |>\e\Big)\leq  \limsup_{N\uparrow \infty}
\bbP_{\o, \mu^N}\bigl( \sup_{0\leq s \leq T} \ | M^N_s |>\e/2\Big)=0
\end{equation} Collecting \eqref{ciclo1}, \eqref{ciclo2} and
\eqref{ciclo3}, together with Lemma \ref{aldo}, we conclude that
\begin{align}
& \lim_{\g \downarrow 0} \limsup_{N\uparrow \infty} \sup_{\t, \theta
\in [0, \g]} \bbP_{\o, \mu^N}\bigl( \bigl | \p^N_{\t+\theta}
[G^\l_N]- \p^N_\t [G^\l_N] \bigr|>\e\Big)=0 \,, \label{uuu1}\\
& \lim _{\e \uparrow \infty }\sup_{N\geq 1}   \bbP_{\o, \mu^N}\bigl(
| \p^N_t [G^\l_N]>\e\Big)=0\label{uuu2}\,.
\end{align}
Aldous criterion for tightness allows to derive the thesis from
\eqref{uuu1} and \eqref{uuu2}.

\end{proof}

Let us come back to \eqref{pranzetto} and investigate the integral
term there. The following holds:
\begin{Le}\label{padova}
Let $ I(t):= \int_0 ^tN^{-d} \sum_{x \in \cC(\o)} g( \eta_s(x))
\bigl( G^\l_N(x/N) - G(x/N) \bigr)ds $. Then, for all $\d>0$,
\begin{equation}\label{fiorello}
\lim _{N\uparrow \infty} \bbP_{\o, \mu^N} \bigl( \sup_{0\leq t \leq
T} |I(t) |>\d\bigr)=0 \,.
\end{equation}
\end{Le}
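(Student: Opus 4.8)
The plan is to reduce the statement to the maximal inequality of Lemma \ref{lucus}, applied to the nonnegative function $H := |G^\l_N - G|$, and then to kill the resulting bound using the two homogenization estimates \eqref{sereno1} and \eqref{sereno2} of Lemma \ref{meteo}. The point is simply that $I(t)$ is a time integral of $g(\eta_s(x))$ weighted by the difference $G^\l_N - G$, which is small in $L^1$ and $L^2$.

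First I would dispose of the time integral and of the nonlinearity $g$. Since the integrand has a fixed sign after taking absolute values, for every $t \le T$ one has $|I(t)| \le \int_0^T N^{-d} \sum_{x\in \cC(\o)} g(\eta_s(x)) |G^\l_N(x/N) - G(x/N)|\, ds$, a bound already independent of $t$, hence controlling $\sup_{0\le t\le T}|I(t)|$. Next I would use the elementary consequence $g(k) \le g^* k$ of the hypotheses $g(0)=0$ and $g^* = \sup_k |g(k+1)-g(k)| < \infty$ to replace $g(\eta_s(x))$ by $g^* \eta_s(x)$, and then bound the time integral by $T$ times the supremum of the integrand over $s \in [0,T]$. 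This yields
\begin{equation*}
\sup_{0\le t \le T} |I(t)| \le g^* T \sup_{0\le s\le T} \frac{1}{N^d} \sum_{x \in \cC(\o)} \eta_s(x)\, H(x/N), \qquad H := |G^\l_N - G|.
\end{equation*}

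At this point I would apply Lemma \ref{lucus} with $k=1$ to the nonnegative function $H$, which belongs to $L^1(\nu^N_\o) \cap L^2(\nu^N_\o)$ because both $G^\l_N$ (by Lemma \ref{meteo}) and $G \in C^\infty_c(\bbR^d)$ do. Choosing the threshold $A = \d/(g^* T)$ gives
\begin{equation*}
\bbP_{\o,\mu^N}\Big( \sup_{0\le t \le T}|I(t)| > \d\Big) \le \frac{g^* T}{\d}\, c(1,\rho_0) \sqrt{\|G^\l_N - G\|_{L^1(\nu^N_\o)}^2 + \|G^\l_N - G\|_{L^2(\nu^N_\o)}^2}\,.
\end{equation*}
Finally I would let $N \uparrow \infty$: the constant $c(1,\rho_0)$ is independent of $N$, while by \eqref{sereno1} and \eqref{sereno2} both norms $\|G^\l_N - G\|_{L^1(\nu^N_\o)}$ and $\|G^\l_N - G\|_{L^2(\nu^N_\o)}$ tend to $0$. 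Hence the right-hand side vanishes, which is exactly \eqref{fiorello}.

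I expect no serious obstacle, since the statement is essentially a repackaging of the two preceding lemmas. The only point requiring a little attention is that the test function $H = |G^\l_N - G|$ itself depends on $N$, so one must use, simultaneously, the $N$-uniformity of the constant in Lemma \ref{lucus} and the $N$-dependent smallness furnished by Lemma \ref{meteo}; once these are combined the conclusion is immediate.
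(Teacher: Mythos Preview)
Your proof is correct. It follows the same overall strategy as the paper --- bound $g(\eta_s(x))$ by $g^*\eta_s(x)$, invoke Lemma \ref{lucus}, and then use the homogenization limits \eqref{sereno1}--\eqref{sereno2} of Lemma \ref{meteo} --- but carries it out in a slightly more direct way.

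The paper inserts an intermediate Cauchy--Schwarz step: writing $H=|G^\l_N-G|$, it bounds
\[
N^{-d}\sum_{x}\eta_s(x)H(x/N)\le \|H\|_{L^1(\nu^N_\o)}^{1/2}\Big(N^{-d}\sum_x \eta_s(x)^2 H(x/N)\Big)^{1/2},
\]
and only then applies Lemma \ref{lucus} with $k=2$, obtaining a bound of order $\d^{-2}\|H\|_{L^1}\sqrt{\|H\|_{L^1}^2+\|H\|_{L^2}^2}$. You skip this detour and apply Lemma \ref{lucus} directly with $k=1$, getting $\d^{-1}\sqrt{\|H\|_{L^1}^2+\|H\|_{L^2}^2}$. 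Both bounds vanish as $N\uparrow\infty$ by Lemma \ref{meteo}, so the conclusion is the same; your route is simply shorter and in fact mirrors exactly what the paper itself does in the proof of Lemma \ref{bisio}. The paper's extra Schwarz step buys a quantitatively sharper dependence on $H$ (an extra factor $\|H\|_{L^1}$) but this is not needed here.
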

\begin{proof}
Since $g(k)\leq g^* k$ and by Schwarz inequality we can bound
$$ I(t) \leq J:=T g^*\| G^\l_N - G\|_{L^1(\nu^N_\o) }^{1/2}   \sup_{0\leq s\leq T} \{N^{-d} \sum_{x \in \cC(\o)}
\eta_s(x)^2 \bigl| G^\l_N(x/N) - G(x/N) \bigr|\}^{1/2}\,.
$$
Using the stochastic domination assumption  and applying Lemma
\ref{lucus} we obtain
\begin{multline}
 \bbP_{\o, \mu^N} \bigl( \sup_{0\leq t \leq
T} |I(t) |>\d\bigr)\leq \bbP_{\o, \nu_{\rho_0} } (J>\d) \leq
\\(1/\d)^2  T^2 ( g^*)^2  \| G^\l_N - G\|_{L^1(\nu^N_\o) } \sqrt{
\| G^\l_N - G\|_{L^1(\nu^N_\o) }^2+\| G^\l_N - G\|^2_{L^2(\nu^N_\o)
}}\,.
\end{multline}
The thesis now follows by applying Lemma \ref{meteo}.
\end{proof}
We are finally arrived at the conclusion. Indeed, due to Lemma
\ref{bisio} and Lemma \ref{formentin} we know that the sequence of
processes $\{\p^N_t\}_{t \in [0,T]}$ is tight in the Skohorod space
$D([0,T], \cM)$. Moreover, starting from the identity
\eqref{pranzetto},  applying Lemma \ref{aldo}, using  the identity
\eqref{gigia} which  equivalent to $$\bbL_N G_N^\l=\l G^\l_N- G^\l=
\l (G^\l_N-G)+ \nabla \cdot \cD \nabla G\,,$$ and finally invoking
Lemma \ref{padova} we conclude that, fixed a good conductance field
$\o$, for any $G \in C^\infty_c(\bbR^d)$ and for any $\d>0$
\begin{equation}\label{superiore}
\lim_{N\uparrow \infty} \bbP_{\o, \mu^N} \Bigl ( \sup_{0\leq t \leq
T} \Big|\p ^N_t(G)-\p^N_0(G)-\int_0 ^t \frac{1}{N^d} \sum _{x \in
\cC (\o)} g ( \eta_s (x) ) \nabla \cdot \cD \nabla G (x/N)ds
\Big|>\d\Big)=0\,.
\end{equation}
Using the stochastic domination assumption it is trivial to prove
that any limit point of the sequence $\{\p^N_t\}_{t \in [0,T]}$ is
concentrate on trajectories $\{ \p_t \}_{t \in [0,T]}$ such that
$\p_t $ is absolutely continuous w.r.t. to the Lebesgue measure.
Moreover, in order to characterize the limit points as solution of
the differential equation \eqref{pasqua3} one would need non only
\eqref{superiore}. Indeed, it is necessary to prove that, given $\o$
good,  for each function $G \in C^\infty_c \bigl([0,T]\times
\bbR^d\bigr) $ it holds
\begin{multline}\label{superiorebis}
\lim_{N\uparrow \infty} \bbP_{\o, \mu^N} \Bigl ( \sup_{0\leq t \leq
T} \Big|\p ^N_t(G_t)-\p^N_0(G_0 )-\\\int_0 ^t \frac{1}{N^d} \sum _{x
\in \cC (\o)} g ( \eta_s (x) ) \nabla \cdot \cD \nabla G_s  (x/N)ds-
\int_0^t \p^N_s (\partial _s G_s )ds   \Big|>\d\Big)=0\,,
\end{multline}
where $G_s(x):= G(s,x)$.  One can easily recover
\eqref{superiorebis} from the same estimates used to get
\eqref{superiore} and suitable approximations of $G$ which are
piecewise linear in $t$ as in the final part of Section 3 in
\cite{GJ1}. In order to avoid heavy notation will continue the
investigation of \eqref{superiore} only.

\section{The Replacement Lemma}\label{rep_lemma}

 As consequence of the discussion in the previous section,
 in order to
prove the hydrodynamical limit stated in Theorem \ref{annibale} we
only need
 to control the term
\begin{equation}\label{sale} \int_0 ^t \frac{1}{N^d} \sum _{x \in \cC (\o)} g ( \eta_s (x)
) \nabla \cdot \cD \nabla G (x/N)ds \,.
\end{equation}
To this aim we  first introduce some  notation. Given a family of
parameters $\a_1, \a_2, \dots , \a_n$, we will write
$$
\limsup_{\a_1 \rightarrow a_1, \a_2 \rightarrow a_2, \dots, \a_n
\rightarrow a _n} $$ instead of
$$
\limsup_{\a_n \rightarrow a_n} \,\limsup_{\a_{n-1} \rightarrow
a_{n-1} } \cdots \, \limsup_{\a_1 \rightarrow a_1}\,.
$$

 Below, given $x \in
\bbZ^d$ and $k \in \bbN$, we write $\L_{x, k}$ for the box
$$ \L_{x,k}:= x+ [-k,k]^d \cap \bbZ^d \,,$$
and we write $\eta^k(x)$ for the density
$$ \eta^k(x) := \frac{1}{(2k+1)^d} \sum _{y\in  \L_{x,k}\cap \cC(\o)
} \eta (y) \,.
$$
If $x=0$ we simply write $\L_k$ instead of $\L_{0,k}$.

 Then,  we
claim that for $\bbQ$--a.a. $\o$, given $G \in C_c (\bbR^d) $,
$\d>0$ and a sequence $\mu^N$ of probability measures on $\bbN ^{\cC
(\o)} $  stochastically dominated by some $\nu_{\rho_0}$ and such
that $H(\mu^N|\nu_{\rho_*})\leq C_0 N^d$, it holds
\begin{multline}\label{codroipo}
\limsup_{N\uparrow \infty,\e\downarrow 0} P_{\o, \mu^N} \Big ( \Big
| \int_0^t  \frac{1}{N^d} \sum _{x \in \cC (\o) } g \bigl( \eta_s
(x) \bigr) G(x/N) ds - \\  \int_0 ^t \frac{m}{N^d} \sum_{x\in
\bbZ^d} \phi \bigl ( \eta _s ^{\e N}  (x) /m  \bigr)G(x /N) ds
\Big|>\d
  \Big)=0\,.
\end{multline}
Let us first assume the above claim and explain how to conclude,
supposing for simplicity of notation that $\e N \in \bbN$.

\smallskip

 Given $u \in \bbR^d$ and $\e>0$, define  $\iota_{u,\e}:=
(2\e)^{-d} \bbI \{ u\in \overline{\overline{}}[-\e,\e]^d\}$. Then
the integral $\p^N\bigl[\iota_{x/N,\e} \bigr]$, $ x \in \bbZ^d$, can
be written as \begin{equation}\label{bo}
 \p^N \bigl[ \iota _{x/N,
\e} \bigr]=\frac{(2\e N +1)^d }{(2\e N)^d}\eta ^{\e N}(x) \,.
\end{equation}
 Let us define
\begin{equation}\label{sale1}
 \int_0 ^t \frac{m}{N^d} \sum _{x \in \bbZ^d } \phi ( \p_s^N  [\iota _{x/N, \e}/m
 ]
) \nabla \cdot \cD \nabla G (x/N)ds \,.
\end{equation}
Then, due to \eqref{bo} and  since $\phi$ is Lipschitz with constant
$g^*$, we can estimate from above the difference between
\eqref{sale1} and the second integral term in \eqref{codroipo} with
$G$ substituted by $ \nabla \cdot \cD \nabla G$ as
\begin{equation}\label{cortocircuito}
\frac{(2\e N +1)^d - (2\e N)^d }{(2\e N)^d} \int_0^t \frac{1}{N^d}
\sum_x \eta^{\e N}_s (x) \bigl|\nabla \cdot \cD \nabla G
(x/N)\bigr|ds
\end{equation}
Since the integral term in \eqref{cortocircuito} has finite
expectation w.r.t $\bbP_{\o, \nu_{\rho_0}}$ and therefore also
w.r.t. $\bbP_{\o, \mu^N}$,  we conclude that the above difference
goes to zero in probability w.r.t. $\bbP_{\o, \mu^N}$. At this point
the conclusion of the proof of Theorem \ref{annibale} can be
obtained by the same arguments used in
 \cite{KL}[pages 78,79].

\medskip

Let us come back to our claim. Since $$\sum _{x \in \cC (\o)}
g(\eta_s (x) ) G(x/N)=\sum _{x \in \bbZ^d} g (\eta_s (x) ) \bbI (x
\in \cC(\o)) G(x/N) \,, $$  by a standard integration by parts
argument  and using that $G \in C_c ^\infty (\bbR^d)$ one can
replace the first integral in \eqref{codroipo} by
$$ \int _0^t\frac{1}{N^d} \sum_{x \in \bbZ^d}   \Big[\frac{1}{ (2\ell +1)^d} \sum _{y:y \in
\L_{x,\ell} \cap \cC(\o) } g\bigl( \eta_s(y) \bigr)\Big] G(x/N)\,.
$$
Then the claim \eqref{codroipo} follows from
\begin{Le} (\textrm{{\bf Replacement Lemma}})
For $\bbQ$--a.a. $\o$, given $\d>0$, $M\in \bbN $ and  given a
sequence of probability measures $\mu^N$ on  $\bbN ^{\cC (\o)} $
stochastically dominated by some $\nu_{\rho_0}$ and such that
$H(\mu^N|\nu_{\rho_*})\leq C_0 N^d$, it holds
\begin{equation}\label{russia}
\limsup_{N\uparrow \infty,\e \downarrow 0 }\bbP_{\o, \mu^N} \Big[
\int _0 ^t \frac{1}{N^d} \sum _{x \in \L_{MN} } V_{\e N} (\tau _x
\eta_s, \tau _x \o )ds>\d \Big]=0\,,
\end{equation}
where
$$ V_\ell ( \eta, \o) = \Big | \frac{1}{ (2\ell +1)^d} \sum _{y:y \in
\L_\ell \cap \cC(\o) } g\bigl( \eta(y) \bigr) -m  \phi \bigl( \eta
^\ell (0)/m \bigr) \Big|\,.
$$
\end{Le}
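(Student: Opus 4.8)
The plan is to follow the two--step scheme of \cite{KL}[Chapter~5], reducing the dynamical bound \eqref{russia} to a static variational problem and then solving the latter by a \emph{one--block estimate} on a mesoscopic box of side $\e N$, the genuinely new ingredient being the control of the cost of rearranging particles inside such a box, i.e.\ the Moving Particle Lemma in weak form built on the chemical--distance estimates of \cite{AP}.

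First I would pass from dynamics to statics. Exploiting the entropy bound $H(\mu^N|\nu_{\rho_*})\leq C_0 N^d$, the entropy inequality and the Feynman--Kac formula for the $N^2$--accelerated process (which is reversible with respect to $\nu_{\rho_*}$), the probability in \eqref{russia} is shown to vanish provided that, writing $D_N$ for the Dirichlet form of $\cL$,
\begin{equation*}
\limsup_{\e\downarrow 0}\limsup_{N\uparrow\infty}\;\sup_{f}\Big\{ \frac{1}{N^d}\sum_{x\in\L_{MN}}\int V_{\e N}(\t_x\eta,\t_x\o)\,f\,d\nu_{\rho_*}-\frac{\a N^2}{N^d}\,D_N(\sqrt f)\Big\}\leq 0
\end{equation*}
for every $\a>0$, the supremum being over $\nu_{\rho_*}$--densities $f$. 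Since particle numbers are unbounded, this step needs the cut--off procedure already used in Lemma \ref{lucus} together with the exponential moment bound $\bar\nu_\varphi(e^{\theta\eta(x)})<\infty$, which keep all functionals integrable.

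Next I would localize. The Dirichlet form is a sum over bonds, so dropping the bonds joining distinct boxes lower--bounds $D_N$ by the sum of the Dirichlet forms on the boxes; grouping the sites $x\in\L_{MN}$ into boxes of side $\ell=\e N$ and using translation invariance of $\bbQ$, the problem reduces to a single box of side $\ell\uparrow\infty$. A direct count shows that under the effective constraint that $D_N(\sqrt f)$ be at most of order $N^{d-2}$, the local Dirichlet form on a box of side $\ell=\e N$ is of order $\e^2\ell^{d-2}$, hence small compared with the natural scale $\ell^{d-2}$ as $\e\downarrow 0$ (here $d\geq 2$ is used, as in Lemma \ref{lucus}). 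I would then decompose the localized density along the hyperplanes of fixed total number of particles in $\L_\ell$; on each such fibre the reference measure is the microcanonical (conditioned) measure, and by the equivalence of ensembles (Section~2.3 of \cite{KL}) the microcanonical mean of $(2\ell+1)^{-d}\sum_{y\in\L_\ell\cap\cC(\o)}g(\eta(y))$ equals $\big(|\L_\ell\cap\cC(\o)|/(2\ell+1)^d\big)\,\phi$ of the density per cluster site, up to an error vanishing as $\ell\uparrow\infty$. Since $|\L_\ell\cap\cC(\o)|/(2\ell+1)^d\to m$ for $\bbQ$--a.a.\ $\o$ by the ergodic theorem and the density per cluster site is $\eta^\ell(0)/m$, this mean converges to $m\,\phi(\eta^\ell(0)/m)$, so $V_\ell$ vanishes under the microcanonical measures.

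The core of the matter, and the step I expect to be the main obstacle, is to show that the localized density, restricted to each fibre, is so close to the microcanonical measure that the excess of $\int V_\ell\,f\,d\nu_{\rho_*}$ over its microcanonical value is absorbed by the Dirichlet--form penalty. On $\bbZ^d$ this is the classical one--block estimate; here the obstruction is that $\cC(\o)\cap\L_\ell$ need not be connected inside $\L_\ell$ and one cannot exhibit a family of connecting paths $\{\gamma_{x,y}\}$ with uniformly bounded bond multiplicity, so the standard Moving Particle Lemma is unavailable. I would therefore use its weak form: by \cite{AP}, for $\bbQ$--a.a.\ $\o$ any two sites of the crossing component of $\cC(\o)\cap\L_\ell$ are joined inside $\cC(\o)$ by a path of length $O(\ell)$, while the remaining sites lie in components whose total density is negligible as $\ell\uparrow\infty$. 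Summing the telescoping bond--by--bond variations of $\sqrt f$ along such a path, and using $|g(k+1)-g(k)|\leq g^*$ together with the stochastic domination by $\nu_{\rho_0}$, bounds the cost of equilibrating a single fibre by $O(\ell)$ times the local Dirichlet form; combined with the scale $\e^2\ell^{d-2}$ of the latter this contribution is negligible as $N\uparrow\infty$ and then $\e\downarrow 0$. Feeding the fibrewise estimate back through the equivalence of ensembles yields the vanishing of the one--block variational problem, hence \eqref{russia}.
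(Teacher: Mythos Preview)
Your reduction from the dynamical statement \eqref{russia} to a static variational problem over densities $f$ with $\cD(\sqrt f)\leq C_0N^{d-2}$ is correct and matches the paper. The genuine gap is in what follows.

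You propose to carry out a single one--block estimate directly on boxes of side $\ell=\e N$, arguing that the Dirichlet form available per box is $O(\e^2\ell^{d-2})$ and that this, together with the Moving Particle Lemma, forces the conditional law on each particle--number fibre to be close to the microcanonical measure. This last step does not follow. The Moving Particle Lemma, whether in its classical form or in the weak version proved here, controls sums of the type $\sum_{u,v}\nu_{\rho_*}\bigl((\nabla_{u,v}\sqrt f)^2\bigr)$ in terms of $\cD(\sqrt f)$ and path lengths; this is exactly what is needed to compare \emph{linear} observables such as $\cN(\L_{y,\ell})-\cN(\L_{z,\ell})$ at two locations, but it does \emph{not} yield equilibration of a nonlinear observable like $\ell_*^{-d}\sum_y g(\eta(y))$ against its microcanonical mean. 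For that you would need a spectral--gap estimate for the zero range dynamics on $\cC(\o)\cap\L_{\e N}$, uniformly as the box grows, and no such estimate is produced by path arguments alone. Equivalently, the perturbation--theory step that extracts the microcanonical mean (Corollary 1.2 in Appendix~3.1 of \cite{KL}) is a finite--dimensional result and cannot be applied uniformly on a family of state spaces whose size diverges with $N$.

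The paper circumvents this by introducing a \emph{second}, fixed microscopic scale $\ell$ and splitting the problem into a One Block Estimate and a Two Blocks Estimate. The One Block Estimate handles the nonlinearity $g(\cdot)$ by doing perturbation theory on $\ell$--boxes with $\ell$ fixed while $N\uparrow\infty$: the Dirichlet form per box then vanishes, compactness applies, and only afterwards does one send $\ell\uparrow\infty$ and invoke equivalence of ensembles. The Two Blocks Estimate compares the \emph{linear} quantities $\eta^\ell(y)$ and $\eta^{\e N}(x)$; here the weak Moving Particle Lemma (Lemma~\ref{media}) suffices because one only needs to control $|\cN(\G_{y,\ell,\a})-\cN(\G_{z,\ell,\a})|$, and perturbation theory is again applied on a \emph{fixed}--size object, namely a pair of $\ell$--boxes. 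Your scheme collapses these two scales into one and therefore loses the compactness that makes the perturbation argument work; you should reinstate the intermediate scale $\ell$ and prove the two estimates separately.
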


Let us define $\Upsilon_{C_0,N}$ as the set of measurable functions
$f: \bbN^{\cC (\o) } \rightarrow [0,\infty)$ such that i) $\nu
_{\rho_*} (f)=1$, (ii) $\cD(f):= \nu_{\rho_*} (\sqrt{f}, -\cL
\sqrt{f} )  \leq C_0 N^{d-2}$ and  (iii) $f d\nu_{\rho_*}$ is
stochastically dominated by $\nu_{\rho_0} $ (shortly, $f d
\nu_{\rho_*} \prec d \nu_{\rho_0}$).
 Using the assumption $H(\mu^N| \nu _{\rho_*})\leq C_0 N^d$ and entropy production arguments as in \cite{KL}[Chapter 5],
in order to prove the Replacement Lemma it is enough to show that
for $\bbQ$--a.a. $\o$, given  $\rho_0, \rho_*, C_0,M>0$, it holds
\begin{equation}\label{rap}
\limsup _{N\uparrow \infty, \e \downarrow 0} \sup_{f\in
\Upsilon_{C_0,N} } \int \frac{1}{N^d} \sum _{x \in \L_{MN} } V_{\e
N}  (\t_x \eta, \t_x \o ) f(\eta) \nu _{\rho_*} (d \eta ) = 0 \,.
\end{equation}
Trivially, since $\nu_{\rho_1}$ stochastically dominates $
\nu_{\rho_2}$ if $\rho_1 >\rho_2$, it is enough to prove that, given
$\rho_0,  \rho_*, C_0,M>0$, for $\bbQ$--a.a. $\o$ \eqref{rap} is
verified.
We claim that the above result follows from the the One Block and
the Two Blocks estimates:
\begin{Le} (\textrm{{\bf One block estimate}})  Fix  $ \rho_0,\rho_*, C_0,M>0$. Then,
for $\bbQ$--a.a. $\o$ it holds
\begin{equation}\label{1block}
\limsup _{N\uparrow \infty, \ell \uparrow \infty}  \sup_{f\in
\Upsilon_{C_0,N} } \int \frac{1}{N^d} \sum _{x \in \L_{MN} } V_\ell
(\t_x \eta, \t_x \o ) f(\eta) \nu _{\rho_*} (d \eta ) = 0 \,.
\end{equation}
\end{Le}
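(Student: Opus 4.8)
The plan is to follow the classical one--block scheme of \cite{KL}[Chapter 5], localizing the global Dirichlet--form bound to a single box and then invoking an equivalence--of--ensembles argument, with the extra care forced by the random, possibly locally disconnected, geometry of $\cC(\o)$. First I would localize the Dirichlet form: writing $\cD_{x,\ell}(f)$ for the part of $\nu_{\rho_*}(\sqrt f,-\cL\sqrt f)$ built only from bonds inside $\L_{x,\ell}\cap\cC(\o)$, each such bond lies in at most $(2\ell+1)^d$ translated boxes, so $\frac{1}{N^d}\sum_{x\in\L_{MN}}\cD_{x,\ell}(f)\leq C\,(2\ell+1)^d N^{-d}\cD(f)\leq C\,(2\ell+1)^d C_0 N^{-2}$, which tends to $0$ as $N\uparrow\infty$ for fixed $\ell$. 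Since $V_\ell(\t_x\eta,\t_x\o)$ depends only on the restriction of $\eta$ to $\L_{x,\ell}$, I would replace $f$ by its conditional expectation $\bar f_{x,\ell}$ given that restriction; by convexity of the Dirichlet form this does not increase $\cD_{x,\ell}$, preserves $\nu_{\rho_*}(\bar f_{x,\ell})=1$, and, since domination passes to marginals, preserves $\bar f_{x,\ell}\,d\nu_{\rho_*}\prec d\nu_{\rho_0}$.

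At this point the estimate reduces to bounding, uniformly over box--densities $h\geq 0$ with $\nu^{\L_\ell}_{\rho_*}(h)=1$, local Dirichlet form of order $(2\ell+1)^d N^{-2}$, and $h\,d\nu^{\L_\ell}_{\rho_*}\prec d\nu^{\L_\ell}_{\rho_0}$, the quantity $\int V_\ell(\eta,\o)\,h\,d\nu^{\L_\ell}_{\rho_*}$. The domination makes this family of box--measures tight, so I would pass to the limit $N\uparrow\infty$ and use that the vanishing local Dirichlet form forces every limit point to be supported on densities constant along the orbits of the box dynamics, that is, on mixtures of canonical measures obtained from $\nu^{\L_\ell}_{\rho_*}$ by conditioning on the number of particles in each connected component of $\L_\ell\cap\cC(\o)$. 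It therefore suffices to show that the canonical average of $V_\ell$ tends to $0$ as $\ell\uparrow\infty$, uniformly over admissible particle numbers.

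The last step is the equivalence of ensembles. Because $\o$ is supercritical, for $\bbQ$--a.a.\ $\o$ the box $\L_\ell\cap\cC(\o)$ possesses, as $\ell\uparrow\infty$, a single giant component carrying a fraction $\to 1$ of its $\sim m(2\ell+1)^d$ sites (see \cite{G}, \cite{AP}); on this component the canonical measure with density $\rho$ makes $\frac{1}{|\L_\ell\cap\cC(\o)|}\sum_{y}g(\eta(y))$ converge to $\phi(\rho)$, while $|\L_\ell\cap\cC(\o)|/(2\ell+1)^d\to m$ and $\eta^\ell(0)/m\to\rho$ identify the limit of $\frac{1}{(2\ell+1)^d}\sum_y g(\eta(y))$ with $m\,\phi(\eta^\ell(0)/m)$, exactly cancelling the two terms in $V_\ell$. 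The unboundedness of $g$ is handled by truncating $g$ at a level $K$ and using $g(k)\leq g^*k$ together with the domination by $\nu_{\rho_0}$, which yields uniform integrability and an error $o_K(1)$ independent of $\ell,N$.

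The main obstacle I expect is precisely the random disconnection of $\L_\ell\cap\cC(\o)$: the box dynamics conserves the particle number of each connected component separately, so a priori the canonical measures equilibrate the density only within each piece, and one cannot directly replace every local density by the global density $\eta^\ell(0)/m$. Overcoming this requires the percolation input that the small components hold a negligible fraction of sites and, using the domination, a negligible fraction of particles, so that the giant--component density coincides with $\eta^\ell(0)/m$ up to $o_\ell(1)$; establishing this uniformly, together with the attendant quenched equivalence of ensembles on the random giant component, is the delicate quantitative heart of the argument.
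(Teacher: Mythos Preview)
Your approach follows the classical \cite{KL}[Chapter~5] scheme and is correct, but the paper proceeds differently. Rather than localizing to the restricted box dynamics $\cD_{x,\ell}$---which, as you correctly identify, conserves particle number on each connected component of $\L_{x,\ell}\cap\cC(\o)$ separately---the paper first passes from $\cC$ to $\cC_\a$ and removes the bad set $B(L)$ (exactly as in the Two Blocks proof), and then proves a one--block analogue of the Moving Particle Lemma (Lemma~\ref{sinfonia}, established ``by the same arguments of Lemma~\ref{media}''): for $u,v\in\G_{x,\ell,\a}=(\L_{x,\ell}\cap\cC_\a)\setminus B(L)$ the \cite{AP} paths $\g_{u,v}$, which may exit the box, yield a bound on the \emph{complete--graph} form $\sum_{u,v\in\G_{x,\ell,\a}}\nu_{\rho_*}\bigl((\nabla_{u,v}\sqrt f)^2\bigr)$ in terms of the global $\cD(f)$. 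Since the generator $\sum_{u,v}\nabla_{u,v}$ is ergodic with the single conserved quantity $\cN(\G_{x,\ell,\a})$, the subsequent spectral subadditivity and perturbation--theory step reduce directly to a standard equivalence of ensembles on a set of $\sim m_\a\ell_*^d$ sites, with no disconnection issue at all. Your route is more elementary in that it avoids the \cite{AP} machinery, but pays for this with the multi--component canonical measures and the attendant percolation input (non--giant pieces of $\L_{x,\ell}\cap\cC(\o)$ carry $o(\ell^d)$ sites for most $x\in\L_{MN}$); the paper's route instead reuses the Two Blocks machinery wholesale and borrows connectivity from outside the box through the \cite{AP} paths, sidestepping entirely the obstacle you flagged.
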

\begin{Le}
 \textrm{{(\bf Two blocks estimate)}} Fix  $ \rho_0,\rho_*, C_0,M>0$. Then,
for $\bbQ$--a.a. $\o$ it holds
\begin{multline}\label{2blocks}
\limsup _{N\uparrow \infty,  \e \downarrow 0, \ell \uparrow \infty}
 \sup_{f\in
\Upsilon_{C_0,N} }\\ \int \frac{1}{N^d} \sum _{x \in \L_{MN} }
\Big[\frac{1}{(2\e N+1)^d } \sum _{ y \in \L_{x, \e N} } \bigl| \eta
^\ell (y) - \eta ^{\e N} (x) \bigr|\Big] f(\eta) \nu _{\rho_*} (d
\eta ) = 0 \,.
\end{multline}
\end{Le}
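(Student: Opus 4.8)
The plan is to adapt the classical two blocks argument of \cite{KL}[Chapter 5], replacing the spectral-gap and ellipticity inputs by a \emph{weak Moving Particle Lemma} resting on the chemical-distance estimates of \cite{AP}, together with a two-scale splitting of the conductance field that bypasses the absence of ellipticity. Concretely, I would first reduce \eqref{2blocks} to a pair comparison of microscopic densities. Reorganizing the sum defining $\eta^{\e N}(x)$ one sees that, for $\ell$ and $\e$ fixed, $(2\e N+1)^{-d}\sum_{y\in\L_{x,\e N}}\eta^\ell(y)$ differs from $\eta^{\e N}(x)$ only through the sites of $\L_{x,\e N}$ lying within distance $\ell$ of its boundary, i.e. by a term of relative size $O(\ell/\e N)$, which vanishes as $N\uparrow\infty$; the $\bbP_{\o,\nu_{\rho_0}}$-integrability of the empirical density granted by the stochastic domination controls this error. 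The triangle inequality then bounds the bracket in \eqref{2blocks} by $(2\e N+1)^{-2d}\sum_{y,y'\in\L_{x,\e N}}|\eta^\ell(y)-\eta^\ell(y')|$, so it suffices to estimate $\int|\eta^\ell(y)-\eta^\ell(y')|\,f\,\nu_{\rho_*}(d\eta)$ uniformly over $f\in\Upsilon_{C_0,N}$, after averaging over the pairs $y,y'\in\L_{x,\e N}$ and over $x\in\L_{MN}$.

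For a single pair of microscopic boxes I would realize the density difference as a transport of particles from $\L_{y,\ell}$ to $\L_{y',\ell}$ along paths of the cluster, estimating its cost by telescoping each elementary move $\eta\to\eta^{a,a+e}$ and applying Cauchy--Schwarz, in the spirit of the Moving Particle Lemma. To deal with the lack of ellipticity I would route these paths inside the \emph{strong} cluster $\cC(\hat\o_c)$ of bonds with $\o>c$, which is still supercritical for $c$ small and on which $\o\geq c$, so that the factors $1/\o(a,a+e)$ produced by Cauchy--Schwarz stay bounded by $1/c$; the rate $g$ is handled through $g(k)\leq g^*k$ together with a truncation $\eta(a)\wedge A$ whose error is dominated by the $\nu_{\rho_0}$-tails. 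The sites of $\cC(\o)$ not belonging to $\cC(\hat\o_c)$ have, by ergodicity, vanishing asymptotic density as $c\downarrow0$, and their contribution to $\eta^\ell$ is controlled by the domination, so they may be discarded with an error tending to $0$ with $c$. This yields a bound of the schematic form
$$
\int\bigl|\eta^\ell(y)-\eta^\ell(y')\bigr|\,f\,\nu_{\rho_*}(d\eta)\;\leq\;\frac{C(c,A)}{(2\ell+1)^{d}}\,\sqrt{\Theta(y,y')}\,\sqrt{\cD(f)}\,,
$$
where $\Theta(y,y')$ is a geometric functional of the chosen path family combining the total chemical length and the maximal number of paths through a single bond, and $\cD(f)\leq C_0N^{d-2}$.

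The delicate point, and \textbf{the main obstacle}, is the control of $\Theta$. By \cite{AP}, for $\bbQ$-a.a.\ $\o$ any two points of $\cC(\hat\o_c)$ at Euclidean distance $r$ inside a box of side $\e N$ are joined inside the cluster by a path of chemical length $O(r)=O(\e N)$, which controls the total-length part of $\Theta$. The congestion part, however, cannot be bounded uniformly in the bond, as stressed in the Introduction; my plan is therefore not to insist on the standard Moving Particle Lemma but to exploit its weaker, \emph{averaged} version: the sum over $x\in\L_{MN}$ and over the pairs $y,y'$ turns the uncontrollable $\max_b$ into an averaged count of paths through a given bond, which has finite $\bbQ$-expectation by the estimates of \cite{AP}; it is precisely here that the i.i.d.\ hypothesis on $\o$ is used.

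Finally, collecting the bounds and using the prefactor $N^{-d}\sum_{x\in\L_{MN}}$ together with Birkhoff's ergodic theorem (the summands being translates of a fixed local functional of $(\eta,\o)$ and $\bbQ$ being ergodic) to pass to $\bbQ$-expectations, one is left with a deterministic quantity that vanishes in the prescribed order $N\uparrow\infty$, $\e\downarrow0$, $\ell\uparrow\infty$, and then $c\downarrow0$, $A\uparrow\infty$. This is exactly the content of \eqref{2blocks}, and combined with the One block estimate \eqref{1block} it yields \eqref{rap} and hence the Replacement Lemma.
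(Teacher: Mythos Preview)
Your reduction to pair comparisons and the idea of routing paths through the strong cluster $\cC(\hat\o_c)$ are correct and match the paper. However, there are two genuine gaps.

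\textbf{First gap: the endgame after the Moving Particle Lemma.} Your displayed ``schematic bound''
\[
\int\bigl|\eta^\ell(y)-\eta^\ell(y')\bigr|\,f\,\nu_{\rho_*}(d\eta)\;\leq\;\frac{C(c,A)}{(2\ell+1)^{d}}\,\sqrt{\Theta(y,y')}\,\sqrt{\cD(f)}
\]
is not what telescoping along a path plus Cauchy--Schwarz gives you. That argument bounds the \emph{long--jump Dirichlet form} $\sum_{u,v}\nu_{\rho_*}\bigl((\nabla_{u,v}\sqrt f)^2\bigr)$ by a multiple of $\cD(f)$; it says nothing directly about $\int Vf\,\nu_{\rho_*}$ for $V=|\eta^\ell(y)-\eta^\ell(y')|$. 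In fact, with $\cD(f)\leq C_0N^{d-2}$ and path lengths of order $\e N$, the Moving Particle estimate (see Lemma~\ref{media}) only yields a quantity that is \emph{bounded}, not one that vanishes. The paper closes the argument by a further spectral step: the MPL enlarges $\Upsilon^\sharp_{C_0,N}$ to a class $\Upsilon^\natural$ controlled by the long--jump form, then one passes to $\sup spec_{L^2(\nu_{\rho_*})}\{V+\gamma\k(L)\e^{-2}\ell_*^{-2d}\sum_{u,v}\nabla_{u,v}\}$, uses subadditivity to localize to pairs of boxes, conditions on the particle number, and applies perturbation theory (\cite{KL}, App.~3.1): the prefactor $\e^{-2}$ makes the gap of the long--jump generator diverge as $\e\downarrow 0$, reducing the problem to a canonical--measure expectation handled by the local CLT. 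Your appeal to Birkhoff's ergodic theorem in the last paragraph does not substitute for this; ergodicity averages over $\o$, not over the particle configuration, and cannot by itself force $\int Vf\,\nu_{\rho_*}$ to vanish uniformly over $f\in\Upsilon_{C_0,N}$.

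\textbf{Second gap: control of the path geometry.} You propose to tame congestion purely by averaging, asserting that the averaged count of paths through a bond has finite $\bbQ$--expectation ``by the estimates of \cite{AP}''. This is not enough, and it is not how the paper proceeds. The paths $\gamma_{u,v}$ constructed in \cite{AP} traverse the sets $\bar\bfC^*_\bfa$ along the macroscopic route $\bfA_{u,v}$, and these $*$--clusters can be arbitrarily large for fixed $\o$; neither the length of $\gamma_{u,v}$ nor the congestion $|\bbX(b)|$ is bounded by $c\,\e N$ or $c\,(\e N)^{2d+1}\ell_*^{2d}$ without further input. The paper introduces a second truncation: the bad set $B(L)=\cup_{|\bfC^*_\bfa|>L}\Delta_{\bfa,10K}$, shows (Lemma~\ref{dindondan}, via stochastic domination and the Fontes--Newman pre--clusters) that $|B(L)\cap\L_N|/|\L_N|\to 0$ as $L\uparrow\infty$, removes $B(L)$ from the blocks $\G_{y,\ell,\a}$, and---crucially---conditions $f$ on the $\sigma$--algebra of the remaining good sites $G_N$. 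Only then does one get the pointwise bounds $|\gamma_{u,v}|_*\leq c(K,L)\,\e N$ and $|\bbX(b)|\leq c(K,L)(\e N)^{2d+1}\ell_*^{2d}$ that make Lemma~\ref{media} work almost surely. Your outline omits this layer entirely.
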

We point out that the form of the Two Blocks Estimate is slightly
weaker  from the one in \cite{KL}[Chapter 5], nevertheless it is
strong enough to imply, together with the One Block Estimate,
equation \eqref{rap}. Indeed, let us define $a(y):= \bbI (y \in \cC
(\o)) $ and
\begin{align}
& I_1 (\eta):=\frac{1}{N^d} \sum _{x \in \L_{MN} } \left| Av _{y \in
\L_{x, \e N} } g( \eta (y) ) a (y) - Av _{y \in \L_{x, \e N } }
Av_{z\in \L_{y, \ell } }  g( \eta (z) ) a (z)\right|\,,
\\
& I_2 (\eta):=  \frac{1}{N^d} \sum _{x \in \L_{M N} }\left| Av_{y
\in \L_{x, \e N } } \Bigl( Av _{z \in \L_{y,  \ell } } g( \eta (z) )
a (z)- m \phi (
\eta^\ell (y) /m ) \Bigr)\right|  \,,\\
& I_3 (\eta):=\frac{1}{N^d} \sum _{x \in \L_{M N} } \left| Av_{y \in
\L_{x, \e N } } m\phi ( \eta^\ell (y) /m) - m\phi ( \eta^{\e N }
(x)/m ) \right)\,,
%
\end{align}
 where $Av$ denotes
the standard average. Then
\begin{multline}\label{acqua}\frac{1}{N^d} \sum_{x \in \L_{M N}}
V_{\e N} (\tau _x \eta, \tau _x \o )=\\ \frac{1}{N^d} \sum_{x \in
\L_{M N}}\Big| Av_{y \in \L_{x, \e N}} g (\eta(y) )a(y) - m \phi (
\eta^{\e N}(x)/m) \Big|\leq (I_1+I_2+I_3)(\eta)\,.
\end{multline}

Let us explain a simple bound that will be frequently used below,
often without any mention. Consider a family of numbers  $b(x)$, $x
\in \bbZ^d$. Then, taking $L, \ell >0$ we can write
\begin{multline*} Av _{x \in \L_L} b(x)- Av_{x\in \L_L} Av_{u \in
\L_{x,\ell} }b(u)
=\\
\frac{1}{|\L_L| } \sum _{x \in \L_{L+\ell}} b(x) \bigl( \bbI(x\in
\L_L) - \frac{1}{|\L_\ell|}\sharp\{u \in \L_L: |x-u|_\infty\leq
\ell\} \bigr).
\end{multline*}
In particular, it holds
\begin{equation}\label{semplice}
\bigl| Av _{x \in \L_L} b(x)- Av_{x\in \L_L} Av_{u \in
\L_{x,\ell}}b(u) \bigr|\leq  \frac{1}{|\L_L| } \sum _{x \in
\L_{L+\ell}\setminus \L_{L-\ell} } |b(x)| \,.
\end{equation}
Due to the above bound  we conclude that $$I_1 (\eta) \leq \frac{g^*
}{N^d (2\e N+1)^d}  \sum _{x \in \L_{MN} } \sum _{u \in \L_{x, \e
N+\ell}\setminus \L_{x, \e N - \ell} } \eta (u) a(u) \,.
$$
In particular, using that $f d\nu_{\rho_*} \prec d\nu_{\rho_0}$, we
conclude that $ \int I_1 (\eta) f(\eta) \nu_{\rho_*}(d \eta) \leq  c
\ell/(\e N)\,. $
 The second term $I_2(\eta)$ can be estimated for $\e\leq 1$ as
$$ I_2(\eta)\leq \frac{1}{N^d} \sum _{x \in \L_{M N} } Av_{y
\in \L_{x, \e N } }  V_{\ell} (\t_z \eta, \t_z \omega)\leq
\frac{1}{N^d} \sum _{x \in \L_{(M+1) N} } V_{\ell} (\t_x \eta, \t_x
\omega) \,.$$ Due to the One block estimate one gets that
$$\limsup _{N\uparrow \infty,  \e \downarrow 0, \ell \uparrow \infty}\sup _{f \in \Upsilon_{C_0,N} }\int
I_2 (\eta) f(\eta) \nu_{\rho_*} (d\eta)=0\,.
$$
The same  result holds also for $I_3(\eta) $ due to  the Lipschitz
property of $\phi$ and the Two Blocks estimate. The above
observations together with \eqref{acqua} imply \eqref{rap}.



\section{Proof of the Two Blocks Estimate}

 For simplicity of notation we set $\ell_*=(2\ell+1)^d$ and we take $M=1$ (the general case can be treated
similarly). Moreover, given $\D\subset \bbZ^d$, we write $\cN(\D)$
for the number of particles in the region  $\D$, namely
$\cN(\D):=\sum _{x \in \D\cap \cC(\o)} \eta (x)$.

Let us set
\begin{align}
& A(\eta):= \frac{1}{N^d} \sum _{x \in \L_{N} } Av_{ y \in \L_{x, \e
N} } \bigl| \eta ^\ell (y) - \eta ^{\e N}
(x) \bigr|\,,\\
& B(\eta):= \frac{1}{N^d } \sum _{x \in \L_{N} }  Av_{ y \in \L_{x,
\e N} } Av_{z \in \L_{x, \e N }}\bigl| \eta ^\ell (y) - \eta ^\ell
(z)
\bigr|\,,\\
 & C(\eta):=\frac{1}{N^d |\L_{\e N}|} \sum _{x \in \L_{N} }  Av_{ y \in
\L_{x, \e N}
} \sum_{\substack{ z \in \L_{x, \e N }:\\
|z-y|_\infty \geq 2\ell } }\bigl| \eta ^\ell (y) - \eta ^\ell (z)
\bigr|\,.
\end{align}
 Since due to \eqref{semplice} $$\eta ^{\e N}
(x)= Av_{z \in \L_{x, \e N}} \eta ^\ell (z) + \cE\,, \qquad |\cE|
\leq c |\L_{\e N}|^{-1} \sum _{u \in \L_{x, \e N+\ell} \setminus
\L_{x, \e N -\ell} } \eta (u)\,,
$$
using that  $f d\nu_{\rho_*} \prec d\nu_{\rho_0}$, in order to prove
the Two Blocks estimate we only need to show that $$ \limsup
_{N\uparrow \infty,  \e \downarrow 0, \ell \uparrow \infty}\sup _{f
\in \Upsilon_{C_0,N} }\int B (\eta) f(\eta) \nu_{\rho_*}
(d\eta)=0\,.
$$
Since
$$ B(\eta) \leq C(\eta)+\frac{1}{N^d |\L_{\e N}|} \sum _{x \in \L_{N} }  Av_{ y \in
\L_{x, \e N}
} \sum_{\substack{ z \in \L_{x, \e N }:\\
|z-y|_\infty \leq  2\ell } }\bigl( \eta ^\ell (y) +\eta ^\ell (z)
\bigr)\,,
$$
using again that $f d\nu_{\rho_*} \prec d\nu_{\rho_0}$,
 in order to prove the Two Blocks Estimate we only need to
show that
\begin{multline}\label{testa}
\limsup _{N \uparrow \infty, \e \downarrow 0, \ell \uparrow
\infty } \sup_{f\in \Upsilon_{C_0,N} }\\
\int \frac{1}{N^d (\e N) ^{ 2d } \ell^d } \sum _{x \in \L_N } \sum
_{ y
\in \L_{x, \e N} } \sum_{\substack{ z \in \L_{x, \e N }:\\
|z-y|_\infty \geq 2\ell } }   \bigl| \cN( \L_{y, \ell} )  - \cN(
\L_{z, \ell} ) \bigr| f(\eta) \nu _{\rho_*} (d \eta ) = 0 \,.
\end{multline}
 Let us now make an observation that
will be frequently used below. Let $X \subset \bbZ^d$ be a subset
possibly depending on $\o$ and on some parameters (for simplicity,
we consider a real--value parameter $L \in \bbN$). Suppose that for
$\bbQ$--a.a. $\o$ it holds
$$\limsup _{N \uparrow \infty,L\uparrow \infty}  \frac{|X \cap \L_N|
}{N^d}=0\,.
$$
Then,  in order to prove \eqref{testa} we only need to show that
\begin{multline}\label{testabisso}
\limsup_{N\uparrow \infty, \e \downarrow 0 , \ell \uparrow \infty ,
L\uparrow \infty }
 \sup_{f\in \Upsilon_{C_0,N} }\\
\int \frac{1}{N^d (\e N) ^{ 2d } \ell^{d} } \sum _{x \in \L_N } \sum
_{ y
\in \L_{x, \e N} } \sum_{\substack{ z \in \L_{x, \e N }:\\
|z-y|_\infty \geq 2\ell } }   \bigl| \cN( \L_{y, \ell}\setminus X) -
\cN(  \L_{z, \ell}\setminus X ) \bigr| f(\eta) \nu _{\rho_*} (d \eta
) = 0 \,.
\end{multline}

We know that there exists $\a_0>0$ such that  for each  $\a\in
(0,\a_0] $ the random field  $\hat \o _\a$ defined in
\eqref{paranza3} is a supercritical Bernoulli bond percolation. Let
us write $\cC_\a (\o) $ for the associated infinite cluster. By
ergodicity,
\begin{equation}\label{testacoda}
\limsup_{ N\uparrow \infty, \a \downarrow 0} |(\cC \setminus
\cC_\a)\cap \L_N|/|\L_N|=\lim_{\a \downarrow 0}
 \bbQ( 0 \in \cC \setminus
\cC_\a)=0\,.
\end{equation}
Hence, due to the above considerations, \eqref{testa} is proven if
we show that, for each $\a\in (0,\a_0]$, it holds \eqref{testa} with
$\cC$ replaced by $\cC_\a$. Moreover, since $f d \nu_{\rho_*}
\prec\nu _{\rho_0}$, using Chebyshev inequality it is simple to
prove that
\begin{multline*}
\limsup_{N\uparrow \infty, \e \downarrow 0, \ell \uparrow \infty, A
\uparrow \infty }\int \frac{1}{N^d (\e N)^{2 d} }
 \sum _{x \in \L_{N} } \sum _{ y
\in \L_{x, \e N} } \sum_{\substack{ z \in \L_{x, \e N }:\\
|z-y|_\infty \geq 2\ell } }  \\ \bbI( \cN( \L_{y, \ell}\cup
\L_{z,\ell} )> A \ell_* ^d) f(\eta) \nu _{\rho_*} (d \eta )=0\,.
\end{multline*}
At this point, we only need to prove the following: Fixed $\a\in (0,
\a_0]$ and $A>0$, for $\bbQ$--a.a. $\o$ it holds
\begin{multline}\label{piedi}
\limsup _{N \uparrow \infty,\e\downarrow 0,  \ell \uparrow \infty}
\sup_{f\in \Upsilon^*_{C_0,N} } \int \frac{1}{N^d (\e N)^{2 d} \ell
^{d} }
 \sum _{x \in \L_{N} } \sum _{ y
\in \L_{x, \e N} } \sum_{\substack{ z \in \L_{x, \e N }:\\
|z-y|_\infty \geq 2\ell } }  \\
 \bigl| \cN( \cC_\a \cap \L_{y,\ell}
) - \cN (\cC_\a \cap \L_{z,\ell} ) \bigr|\bbI ( \cN( \L_{y,
\ell}\cup \L_{z,\ell} )\leq A \ell_* ^d) f(\eta) \nu _{\rho_*} (d
\eta ) = 0 \,,
\end{multline}
where $\Upsilon^*_{C_0,N}$ is  the family of measurable functions
$f:\bbN^{\cC(\o)}\rightarrow [0,\infty)$ such that
$\nu_{\rho_*}(f)=1$ and $\cD (f) \leq C_0 N^{d-2}$.

\bigskip

 We now use the results of \cite{AP} about the chemical distance in the
 supercritical Bernoulli bond percolation  $\hat \o _\a$, for some fixed $\a \in (0, \a_0]$.
  We fix a
positive integer $K$ (this corresponds to the parameter $N$ in
\cite{AP}, which is fixed large enough once for all). Given $\bfa
\in \bbZ^d$ and $s>0$, we set $ \D _{\bfa, s} := \L_{(2K+1) \bfa, s
}$.
 As in \cite{AP}, we call $\hat \o_\a$ the
microscopic random field. The macroscopic one  $\s = \{ \s(\bfa ) :
\bfa \in \bbZ^d\} \in \{0,1\}^{\bbZ^d} $ is defined in \cite{AP}
stating that $\s (\bfa )=1$ if and only if the microscopic field
$\hat \o _\a$ satisfies certain geometric properties inside the box
$ \D_{\bfa, 5K/4}$. These properties are described on page 1038 in
\cite{AP}, but their content is not relevant here, hence we do not
recall them.  What is relevant for us is that there exists a
function $\bar p: \bbN \rightarrow [0,1)$  with $\lim _{K\uparrow
\infty} \bar p (K)=1$, such that $\s$ stochastically dominates a
Bernoulli site percolation of parameter $\bar p (K)$ (see
Proposition 2.1 in \cite{AP}).
  Below we  denote by $\bbP_{\bar p (K) }$ the law of this last
  random field, taking  $K$ large enough such that
  $\bar p (K)$ is supercritical.
 As in \cite{AP} we call a point $\bfa \in \bbZ^d$
white or black if $\s(\bfa)=1$ or $0$ respectively,    and we write
in boldface the sites referred to the macroscopic field.  Recall
that a subset of $\bbZ^d$ is $*$--connected if it is connected with
respect to the adjacency relation
$$ x \stackrel{*}{ \sim }y \Leftrightarrow  |x-y|_\infty=1\,.$$
$\cC^*$ is defined as the set of all $*$--connected macroscopic
black clusters. Given $\bfa \in \bbZ^d$,    $\bfC ^*_{\bfa} $
denotes the element of $\cC^*$ containing $\bfa$ (with the
convention that $\bfC ^*_{\bfa}= \emptyset$ if $\bfa$ is white),
while $\bar{\bfC}^*_\bfa$ is defined as $\bar{\bfC}^*_\bfa=
\bfC^*_\bfa \cup \partial^{out} \bfC^*_\bfa$. We recall that, given
a finite subset $\Lambda \subset \bbZ^d$, its outer boundary is
defined as
$$ \partial^{out} \Lambda:=
 \bigl\{ x\in \L^c\,:\, \exists y \in \L,
\; \{x,y\} \in  \mathbb{E}_d\bigr\}\,.
$$
We use the convention that $\partial ^{out} \bfC ^*_\bfa =\{\bfa\}$
for a white site $\bfa \in \bbZ^d$. Hence, for $\bfa$ white it holds
$\bar{\bfC}^*_\bfa=\{\bfa\}$.

\smallskip

 Let us recall the first part of
Proposition 3.1 in \cite{AP}. To this aim,  given $x, y \in \bbZ^d$,
we write $\bfa (x) $ and $\bfa (y)$ for the unique sites in $\bbZ^d$
such that $x \in \D_{\bfa, K}$ and $y \in \D_{\bfa ,K}$. We set $n:=
|\bfa (x) -\bfa (y)|_1$ and choose a macroscopic path
$\bfA_{x,y}=(\bfa_0,\bfa_1, \dots , \bfa _n)$ with $\bfa_0=\bfa (x)$
and $\bfa _n = \bfa (y)$ (in particular, we require that $|\bfa _i-
\bfa _{i+1}|_\infty =1 $). We build the path $\bfA_{x,y}$ in the
following way: we start in $\bfa (x)$, then  we move by unitary
steps along the line $\bfa (x)+ \bbZ e_1$ until reaching the point
$\bfa'$ having the same first coordinate as $\bfa (y)$, then we
move by unitary steps along the line $\bfa' + \bbZ e_2$ until
reaching the point having the same first two coordinates as $\bfa
(y)$ and so on. Then, Proposition 3.1 in \cite{AP} implies (for $K$
large enough, as we assume) that given any points $x,y \in \cC _\a$
there exists a path $\g_{x,y}$ joining $x$ to $y$ inside $\cC_\a$
such that $\g_{x,y}$ is contained in
\begin{equation}\label{passaggio}
 W_{x,y} := \cup
_{\bfa \in \bfA _{x,y}} \left( \cup _{\bfw \in \bar C ^* _\bfa }
\D_{\bfw , 5K/4 }\right)\,.
\end{equation}


\smallskip

 These are  the  main results of \cite{AP} that we will use
below. Note that, since the sets $\bar \bfC_\bfa ^*$ can be
arbitrarily large, the information that $\g_{x,y} \subset W_{x,y}$
is not strong enough to allow to repeat the usual arguments in order
to prove the Moving Particle Lemma, and therefore the Two Blocks
Estimate. Hence, one needs some new ideas, that now we present.

\smallskip

 First, we isolate a set of bad points as follows. We fix a
parameter $L>0$ and  we define the subsets $\bfB(L), B(L) \subset
\bbZ^d$ as
\begin{align}
& \bfB(L):=\{ \bfa\in \bbZ^d\,:\, |\bfC^*_\bfa |>L\}\,,\label{gola1} \\
&  B(L):= \cup_{\bfa \in \bfB (L)} \D_{\bfa , 10 K} \label{gola2}\,.
\end{align}
\begin{Le}\label{dindondan}
Given $\a$  in  $(0,\a_0]$, for $\bbQ$--a.a. $\o$ it holds
\begin{equation}\label{dindon}
 \limsup_{N\uparrow \infty, L \uparrow \infty} \frac{ | B(L) \cap \L_N|}{|\L_N|} =0\,.
\end{equation}
\end{Le}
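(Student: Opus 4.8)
The plan is to pass from the microscopic box-counting in \eqref{dindon} to a purely macroscopic spatial average over the field $\s$, and then to combine ergodicity with the subcriticality of the black macroscopic clusters. Throughout, $K$ is the fixed (large) parameter already chosen.

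\textbf{Reduction to a macroscopic average.} First I would observe that each box $\D_{\bfa,10K}=\L_{(2K+1)\bfa,10K}$ contains exactly $(20K+1)^d$ microscopic sites, and that $\D_{\bfa,10K}$ meets $\L_N$ only when $\bfa$ lies in the macroscopic box $\cA_N:=\{\bfa\in\bbZ^d:\|(2K+1)\bfa\|_\infty\leq N+10K\}$. Since $|\bfC^*_\bfa|>L$ is exactly the condition defining $\bfB(L)$ in \eqref{gola1}, this yields the deterministic bound
\begin{equation*}
\frac{|B(L)\cap\L_N|}{|\L_N|}\leq (20K+1)^d\,\frac{|\cA_N|}{|\L_N|}\cdot\frac{1}{|\cA_N|}\sum_{\bfa\in\cA_N}\bbI\bigl(|\bfC^*_\bfa|>L\bigr)\,.
\end{equation*}
As $N\uparrow\infty$ one has $|\cA_N|/|\L_N|\to (2K+1)^{-d}$, so the prefactor is bounded uniformly in $N$ and $L$ and the problem is reduced to controlling the macroscopic average of $\bbI(|\bfC^*_\bfa|>L)$.

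\textbf{Ergodic theorem.} Next I would note that $\s$ is a factor of the i.i.d. field $\hat\o_\a$ under the group of macroscopic translations (a macroscopic shift by $\mathbf v$ corresponds to the microscopic shift by $(2K+1)\mathbf v$, which preserves the product law), hence $\s$ is stationary and ergodic on $\bbZ^d$. For each fixed $L$ the map $\bfa\mapsto\bbI(|\bfC^*_\bfa|>L)$ is a bounded measurable function of $\s$, so the multidimensional (Wiener) ergodic theorem applied along the Følner sequence of cubes $\cA_N$ gives, for $\bbQ$--a.a. $\o$,
\begin{equation*}
\lim_{N\uparrow\infty}\frac{1}{|\cA_N|}\sum_{\bfa\in\cA_N}\bbI\bigl(|\bfC^*_\bfa|>L\bigr)=\bbQ\bigl(|\bfC^*_{\mathbf 0}|>L\bigr)\,.
\end{equation*}
Intersecting over integer $L$, this convergence holds for all $L$ simultaneously on a single set of full measure.

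\textbf{Subcriticality and conclusion.} Finally I would bound the limiting probability. Recall that $\s$ stochastically dominates Bernoulli site percolation of parameter $\bar p(K)$ with $\bar p(K)\uparrow 1$; choosing $K$ so large that $1-\bar p(K)$ lies below the critical threshold for $*$-connected site percolation makes the black sites a subcritical field. In a coupling realizing the domination, the black sites of $\s$ are contained in those of the dominated Bernoulli field, hence $\bfC^*_{\mathbf 0}$ is contained in the corresponding subcritical $*$-cluster; consequently $\bbQ(|\bfC^*_{\mathbf 0}|>L)$ is finite (indeed with exponentially decaying tail) and tends to $0$ as $L\uparrow\infty$. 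Combining the three displays and taking first $N\uparrow\infty$ and then $L\uparrow\infty$ gives \eqref{dindon}. The only delicate points are the scale bookkeeping between the micro- and macroscopic lattices and checking that the cluster-size indicator is a genuine ergodic observable; the conceptual heart, however, is the subcritical tail bound on the macroscopic black $*$-clusters, which is precisely what the choice of large $K$ together with Proposition~2.1 of \cite{AP} provides.
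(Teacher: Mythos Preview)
Your proof is correct and takes a genuinely different, more elementary route than the paper's. After the same reduction from $B(L)$ to the macroscopic count, the paper does not invoke the ergodic theorem at all: it passes through the stochastic domination to the Bernoulli field $\bbP_{\bar p(K)}$, then uses Lemma~2.3 of \cite{DP} together with the Fontes--Newman \emph{pre-cluster} construction \cite{FN1,FN2} to replace the (dependent) cluster sizes $|\bfC^*_\bfa|$ by i.i.d. copies $|\tilde\bfC^*_\bfa|$ with the law of $|\bfC^*_{\mathbf 0}|$ under $\bbP_{\bar p(K)}$; finiteness of $E_{\bbP_{\bar p(K)}}(|\bfC^*_{\mathbf 0}|)$ (formula~(4.47) in \cite{AP}) and Cram\'er's theorem then give exponential concentration of $N^{-d}|\bfB(L)\cap\L_N|$, and Borel--Cantelli closes the argument. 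Your approach replaces this whole machinery by the observation that $\s$ is a stationary ergodic factor of the i.i.d. field $\hat\o_\a$ under the subgroup $(2K+1)\bbZ^d$, so the spatial average converges a.s.\ to $\bbQ(|\bfC^*_{\mathbf 0}|>L)$, which you then send to zero via the same domination/subcriticality input. The trade-off is clear: the paper's route yields a quantitative (exponential) estimate, which is stronger than needed here; your route is shorter, avoids the external lemmas from \cite{DP,FN1,FN2}, and needs only the qualitative tail decay of the subcritical $*$-cluster. Both rely in the end on Proposition~2.1 of \cite{AP} and the choice of $K$ large.
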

\begin{proof}
Since $| B(L) \cap \L_N|  \leq  c(K) |\bfB (L) \cap \L_N|$, we only
need to prove the thesis with $B(L)$ replaced by $\bfB (L)$.
 We
introduce the nondecreasing function $\rho_L:\bbN\rightarrow
[0,\infty)$ defined as $\rho_L(n):= \bbI( n
> L) n$. Then we can bound
$$ | \bfB (L) \cap \L_N| \leq \sum _{ \substack{ \bfC^* \in \cC^*:\\
\bfC^* \cap \L_N \not = \emptyset} } \rho_L(\bfC^*) \,.
$$
Since $\s$ stochastically dominates the Bernoulli site percolation
with law $\bbP_{\bar p (K)}$ and due to Lemma 2.3 in \cite{DP}, we
conclude that
\begin{multline}\label{setina}
\bbQ ( | \bfB (L) \cap \L_N|  \geq a |\L_N|)\leq  \bbQ  \Big (\sum _{ \substack{ \bfC^* \in \cC^*:\\
\bfC \cap \L_N \not = \emptyset} } \rho_L(\bfC^*) \geq a |\L_N|\Big)
\leq \\
\bbP _{\bar p (K) }  \Big (\sum _{ \substack{ \bfC ^*\in \cC^*:\\
\bfC \cap \L_N \not = \emptyset} } \rho_L(\bfC^*) \geq a |\L_N|\Big)
  \leq P\Big ( \sum _{\bfa \in \L_N} \rho_L (\tilde \bfC^* _\bfa )
  \geq a |\L_N|\Big)\,,
\end{multline}
where the random variables $\tilde \bfC^* _\bfa$ (called
\emph{pre--clusters})   are i.i.d. and have the same law of
$\bfC^*_{\mathbf{0} }$ under $\bbP_{\bar p (K)}$. Their construction
is due to Fontes and Newman \cite{FN1}, \cite{FN2}.  Due to formula
(4.47) of \cite{AP}, $E_{\bbP_{\bar p (K) }} (|\bfC^*_{\mathbf{0}
}|)$ is finite for $K$ large, in particular
\begin{equation}\label{claudia} \lim _{L\uparrow \infty} E(\rho_L (\tilde
\bfC^* _\mathbf{0} ))=0\,.\end{equation}
 By applying Cram\'{e}r's theorem, we deduce that
$$P\Big ( \sum _{\bfa \in \L_N} \rho_L (\tilde \bfC^* _\bfa )
  \geq 2  E(\rho_L (\tilde \bfC^* _\mathbf{0} ))   |\L_N|\Big) \leq e ^{ -
  c (L) N^d}\,,
  $$
for some positive constant $c(L)$ and for all $N \geq 1$. Hence, due
to \eqref{setina} and Borel--Cantelli lemma, we can conclude that
for $\bbQ$--a.a. $\o$ it holds
$$  | \bfB (L) \cap \L_N| /
|\L_N|\leq 2E(\rho_L (\tilde \bfC^* _\mathbf{0} ))\,, \qquad \forall
N \geq N_0 (L,\o) \,.
$$
At this point, the thesis follows from \eqref{claudia}.
\end{proof}

At this point, due to the arguments leading to \eqref{testabisso},
we only need to prove the following: given $\a\in(0,\a_0]$ and
$A>0$, for $\bbQ$--a.a. $\o$ it holds
\begin{multline}\label{piedone}
\limsup _{N \uparrow \infty,\e\downarrow 0,  \ell \uparrow \infty,
L\uparrow \infty} \sup_{f\in \Upsilon^*_{C_0,N} } \int \frac{1}{N^d
(\e N)^{2 d} \ell ^{d} }
 \sum _{x \in \L_{N} } \sum _{ y
\in \L_{x, \e N} } \sum_{\substack{ z \in \L_{x, \e N }:\\
|z-y|_\infty \geq 2\ell } }  \\
 \bigl| \cN( \G_{y, \ell, \a} ) - \cN (\G_{z, \ell, \a}  ) \bigr|\bbI( \cN( \G_{y, \ell, \a}\cup \G_{z,\ell,\a}  )\leq A \ell_*^d ) f(\eta) \nu _{\rho_*} (d \eta ) =
0
\end{multline}
where \begin{equation}\label{zaire}
 \G_{u, \ell, \a}= \bigl(\L_{u,
\ell} \cap \cC_\a \bigr) \setminus B(L) \,, \qquad u \in \bbZ^d\,.
\end{equation}
Above we have used also that
$$\bbI( \cN( \L_{y, \ell}\cup \L_{z,\ell}  )\leq A \ell_*^d )\leq  \bbI ( \cN( \G_{y, \ell, \a}\cup \G_{z,\ell,\a}  )\leq A \ell_*^d )\,.
$$
 Note that in the integral of
\eqref{piedone}, the function $f$ multiplies  an $\cF_N$--measurable
function, where $\cF_N$ is the $\s$--algebra generated by the random
variables $\{\eta(x): x \in G_N\}$ and $G_N$ is the set of
\emph{good points} define as
\begin{equation}\label{patty}
 G_N:= (\L_{N+1} \cap \cC_\a )\setminus B(L)\,.
 \end{equation} Since
$\cD(\cdot)$ is a convex functional (see Corollary 10.3 in Appendix
1 of \cite{KL}), it must be $$\cD( \nu_{\rho_*}(f |\cF _N  )) \leq
\cD (f)\leq  C_0 N^{d-2}\,.$$ Hence, by taking the conditional
expectation w.r.t. $\cF_N$ in \eqref{piedone}, we conclude that we
only need to prove \eqref{piedone} by substituting $\Upsilon^*_{C_0,
N}$ with $\Upsilon ^\sharp _{C_0,N}$ defined as the family of $\cF_N
$--measurable functions $f:\bbN^{\cC(\o)} \rightarrow [0,\infty)$
such that
$\nu(f)=1$ and $\cD (f) \leq C_0 N^{d-2}$. 

\medskip

Recall the definition of the function $\varphi(\cdot)$ given before
\eqref{pasqua0}. By the change of variable $\eta\rightarrow \eta-
\d_x$ one easily proves the identity
\begin{equation}\label{ghiaccio} \nu_{\rho_*} \left[ g(\eta_x)
\bigl( \sqrt{f}(\eta^{x,y} )- \sqrt{f}(\eta) \bigr)^2\right]=
\varphi(\rho_*)\nu_{\rho_*} \left[ \bigl( \sqrt{f}(\eta^{x,+} )-
\sqrt{f}(\eta^{y,+} ) \bigr)^2\right]\,,
\end{equation}
where in general $\eta^{z,+} $ denotes the configuration obtained
from $\eta$ by adding a particle at site $z$, i.e.\ $\eta^{z,+}=
\eta+ \d_z$.  Let us write $\nabla_{x,y}$ for the  operator
$$ \nabla_{x,y} h (\eta) := h(\eta^{x,+})- h (\eta^{y,+} )\,.$$
We can finally state our weak version of the Moving Particle Lemma:
\begin{Le}\label{media}
For $\bbQ$--a.a. $\o$ the following holds. Fixed  $\a\in (0,\a_0]$
and $L>0$, there exists a positive constant $\k=\k(L,\a)$ such that
\begin{multline}\label{scuola}
\frac{\e^{-2}\varphi(\rho_*) }{N^d (\e N)^{2 d} \ell_* ^{2d} }
 \sum _{x \in \L_{N} } \sum _{ y
\in \L_{x, \e N} } \sum_{\substack{ z \in \L_{x, \e N }:\\
|z-y|_\infty \geq 2\ell } } \sum_{u \in  \G_{y, \ell, \a} }
\sum_{v\in \G_{z, \ell, \a}  } \nu_{\rho_*}  \bigl( (\nabla_{u,v}
\sqrt{f})^2 \bigr) \\ \leq
  N^{2-d}  \cD(f)/\k\leq  C_0 /\k\,,
\end{multline}
for any function $f \in \Upsilon^\sharp_{C_0,N}$ and for any
$N,\ell,C_0$.
\end{Le}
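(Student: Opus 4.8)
The plan is to turn \eqref{scuola} into a purely deterministic path-counting estimate for the fixed environment $\o$, and then to read off the powers of $N$, $\e$ and $\ell$. The first step rewrites the Dirichlet form in the ``add a particle'' variables: by the identity \eqref{ghiaccio}, for every bond $b=\{x,x+e\}$ of $\cC_\a(\o)$ one has $\o(x,x+e)\varphi(\rho_*)\nu_{\rho_*}\bigl((\nabla_{x,x+e}\sqrt f)^2\bigr)=\o(x,x+e)\nu_{\rho_*}\bigl[g(\eta_x)(\sqrt f(\eta^{x,x+e})-\sqrt f(\eta))^2\bigr]$. Summing over the bonds of $\cC_\a$, recalling that $\cD(f)=\nu_{\rho_*}(\sqrt f,-\cL\sqrt f)$ is the sum of the corresponding nonnegative terms over \emph{all} bonds of $\cC(\o)$, and using that $\o(b)>\a$ on each bond of $\cC_\a$, I obtain
$$\sum_{b\subset\cC_\a}\varphi(\rho_*)\,\nu_{\rho_*}\bigl((\nabla_b\sqrt f)^2\bigr)\ \le\ \frac1\a\,\cD(f)\,.$$
This is the only point where the conductances enter, and it already accounts for the factor $\varphi(\rho_*)$ in \eqref{scuola} together with the gain $1/\a$.

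The second step is the path decomposition. For each admissible pair $(u,v)$ I fix a path $\g_{u,v}\subset\cC_\a$ joining $u$ to $v$ and telescope $\nabla_{u,v}\sqrt f=\sum_{b\in\g_{u,v}}\nabla_b\sqrt f$, so that by Cauchy--Schwarz $(\nabla_{u,v}\sqrt f)^2\le|\g_{u,v}|\sum_{b\in\g_{u,v}}(\nabla_b\sqrt f)^2$, with $|\g_{u,v}|$ the length of the path. Inserting this in the left-hand side of \eqref{scuola} and exchanging the order of summation, the quantity to control becomes $\sum_b\varphi(\rho_*)\nu_{\rho_*}((\nabla_b\sqrt f)^2)\,W(b)$ with
$$W(b):=\sum_{x,y,z}\ \sum_{u\in\G_{y,\ell,\a}}\sum_{v\in\G_{z,\ell,\a}}|\g_{u,v}|\,\bbI(b\in\g_{u,v})\,.$$
If a bound $W(b)\le\mathcal W$ uniform in $b$ is available, then by the first step the left-hand side of \eqref{scuola} is at most $\tfrac{\e^{-2}\mathcal W}{\a N^d(\e N)^{2d}\ell_*^{2d}}\cD(f)$, and it only remains to match this with $N^{2-d}\cD(f)/\k$.

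For the path length I would use the chemical-distance estimates of \cite{AP}: since all the pairs $(u,v)$ entering \eqref{scuola} satisfy $|u-v|_\infty\le c\,\e N$, for $\bbQ$-a.a. $\o$ and $N$ large one has $|\g_{u,v}|\le c\,\e N$. For the multiplicity I first bound, for fixed $(u,v)$, the number of triples $(x,y,z)$ with $u\in\L_{y,\ell}$, $v\in\L_{z,\ell}$ and $y,z\in\L_{x,\e N}$ by $c\,\ell_*^2(\e N)^d$, whence $W(b)\le c\,\e N\,\ell_*^2(\e N)^d\,m(b)$ with $m(b):=\sharp\{(u,v)\in(\cC_\a\setminus B(L))^2:\ b\in\g_{u,v}\}$. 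The paths are those of Proposition 3.1 in \cite{AP}, namely $\g_{u,v}\subset W_{u,v}=\cup_{\bfa\in\bfA_{u,v}}\cup_{\bfw\in\bar{\bfC}^*_\bfa}\D_{\bfw,5K/4}$. Having removed $B(L)$ from the endpoint sets $\G_{\cdot,\ell,\a}$, the black clusters responsible for $W_{u,v}$ have size at most $L$, so $W_{u,v}$ is a tube of cross-section $O(LK)$ around the macroscopic staircase $\bfA_{u,v}$; hence $b\in\g_{u,v}$ forces $\bfA_{u,v}$ to pass within macroscopic distance $O(L)$ of the box of $b$. Since the staircase consists of at most $d$ axis-parallel segments, the number of endpoint pairs in a region of side $O(\e N)$ whose staircase meets a prescribed box is $O((\e N)^{d+1})$, so that $m(b)\le c(K,L)(\e N)^{d+1}$ uniformly in $b$. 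This gives $\mathcal W\le c(K,L)\ell_*^2(\e N)^{2d+2}$, and the power count
$$\frac{\e^{-2}\ell_*^2(\e N)^{2d+2}}{N^d(\e N)^{2d}\ell_*^{2d}}=\frac{N^{2-d}}{\ell_*^{2d-2}}\le N^{2-d}$$
yields \eqref{scuola} with $\k=\k(L,\a)=\a/c(K,L)$ ($K$ being fixed once and for all); the last inequality $\le C_0/\k$ then follows from $\cD(f)\le C_0N^{d-2}$, valid on $\Upsilon^\sharp_{C_0,N}$.

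The delicate point, and the reason for all the preparation, is precisely the uniform multiplicity bound $m(b)\le c(K,L)(\e N)^{d+1}$: one needs a family $\{\g_{u,v}\}$ in which no bond is overloaded, which is hopeless for naive geodesics in the percolation cluster. The entire role of the bad set $B(L)$ of \eqref{gola2} and of the fact that its density vanishes (Lemma \ref{dindondan}) is to suppress the large black clusters, so that the \cite{AP} tubes between good endpoints keep a cross-section depending only on $L$. The remaining care is to guarantee that two good points can indeed be joined inside $\cC_\a\setminus B(L)$ by such a controlled path, i.e.\ to reroute the staircase around $\bfB(L)$; this is where the geometric results of \cite{AP} and the i.i.d.\ hypothesis on the conductances are genuinely needed.
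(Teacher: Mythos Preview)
Your overall architecture (identity \eqref{ghiaccio} to pass to $\nabla_b$-variables, telescoping along the \cite{AP} paths $\g_{u,v}$, then a path-length $\times$ multiplicity count) is the same as the paper's, and the power count at the end is right. But there is a genuine gap at the heart of the argument: the claim ``having removed $B(L)$ from the endpoint sets, the black clusters responsible for $W_{u,v}$ have size at most $L$'' is false. The staircase $\bfA_{u,v}=(\bfa_0,\dots,\bfa_n)$ is fixed by $u,v$ alone; the fact that $u,v\notin B(L)$ only controls $|\bfC^*_{\bfa(u)}|$ and $|\bfC^*_{\bfa(v)}|$, not $|\bfC^*_{\bfa_i}|$ for intermediate $i$. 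A bond $b$ deep inside a large black cluster (size $\gg L$) can therefore lie on $\g_{u,v}$ while the staircase $\bfA_{u,v}$ stays arbitrarily far from $b$, so your step ``$b\in\g_{u,v}$ forces $\bfA_{u,v}$ to pass within macroscopic distance $O(L)$ of $b$'' breaks down, and with it the uniform multiplicity bound $m(b)\le c(K,L)(\e N)^{d+1}$. Your proposed remedy, rerouting the staircase around $\bfB(L)$ so that $\g_{u,v}\subset\cC_\a\setminus B(L)$, is precisely what the Introduction of the paper warns is ``very hard (maybe impossible)'' to do while keeping a uniform per-bond load.

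The paper avoids rerouting altogether by exploiting a property of $\Upsilon^\sharp_{C_0,N}$ that you never use: functions $f\in\Upsilon^\sharp_{C_0,N}$ are $\cF_N$-measurable, i.e.\ depend on $\eta$ only through $\{\eta(x):x\in G_N\}$ with $G_N=(\L_{N+1}\cap\cC_\a)\setminus B(L)$. Hence $\nabla_b\sqrt f=0$ for every bond $b$ disjoint from $G_N$, and in the telescoping one may replace $|\g_{u,v}|$ by $|\g_{u,v}|_*:=\sharp\{b\in\g_{u,v}:b\cap G_N\neq\emptyset\}$ and restrict the multiplicity count to bonds $b$ that \emph{do} meet $G_N$. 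For such a $b$ one has $b\not\subset B(L)$; tracing back through \eqref{passaggio} this forces the relevant macroscopic cluster $\bfC^*_\bfa$ (with $\bfa\in\bfA_{u,v}$) to satisfy $|\bfC^*_\bfa|\le L$, and now the distance from $b$ to $(2K+1)\bfA_{u,v}$ is genuinely $O_{K,L}(1)$. Both the effective length $|\g_{u,v}|_*\le c(K,L)\e N$ and the multiplicity $|\bbX(b)|\le c(K,L)(\e N)^{2d+1}\ell_*^{2d}$ then follow, and the rest of your computation goes through. In short: the bad set $B(L)$ is removed not from the \emph{paths} but from the \emph{support of $f$}, and this is exactly why the passage from $\Upsilon^*_{C_0,N}$ to $\Upsilon^\sharp_{C_0,N}$ was made before the lemma.
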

\begin{proof}  Recall the definition of the path $\g_{x,y}$ given for
$x, y \in \cC_\a$ in the discussion before \eqref{passaggio}. Given
a bond $b$ non intersecting $G_N$, since $f $ is $\cF_N$--measurable it holds
$\nabla_b \sqrt{f}=0$. Using this simple observation,
 by a standard telescoping argument together with  Schwarz inequality,
 we obtain that
\begin{equation}
\nu_{\rho_*}
 \bigl( (\nabla_{u,v} \sqrt{f})^2 \bigr)
 \leq \Big\{\sum_{i=0}^{n-1} \bbI( \{u_i, u_{i+1} \} \cap G_N \not = \emptyset)
\Big\}\cdot \Big\{ \sum _{i=0}^{n-1}
  \nu_{\rho_*}\bigl((\nabla_{u_i,u_{i+1}}\sqrt{f})^2\bigr)\Big\}\,,
\end{equation}
where the  path $\g_{u,v}$ is written as  $(u=u_0,u_1,\dots,
u_n=v)$. Recall that if $
\nu_{\rho_*}\bigl((\nabla_{u_i,u_{i+1}}\sqrt{f})^2\bigr)\not =0$
then the set $\{u_i,u_{i+1}\}$ must intersect the set of good points
$ G_N$ defined in \eqref{patty}.

 If $b$ is a bond of $\g_{u,v}$, then $b$ must
be contained in the set $W_{u,v}$ defined in \eqref{passaggio}. In
particular, there exists $\bfa \in \bfA _{u,v}$  and  $\bfw\in \bar
\bfC_\bfa ^*$ such that $b$ is contained in $\D_{\bfw, 5K/4}$.
Denoting $d_\infty (\cdot, \cdot) $ the distance between subsets of
$\bbZ^d$ induced by the uniform norm $|\cdot|_\infty$,  we can write
\begin{equation}\label{mmm}
d_\infty( b, (2K+1) \bfw ) \leq 5K/4 \,.
\end{equation}
We claim  that, if $b$ intersects $G_N$, then $|\bfC_\bfa ^*|\leq L$.  If $\bfa $ is
white then $|\bfC_\bfa ^*|=1$ and the claim is trivally true. Let us
suppose that  $\bfa $ is black and that $|\bfC_\bfa ^*|> L$. By
definition of $\bar \bfC_\bfa ^*$, there exists some point $\bfa'
\in \bfC_\bfa ^*$ such that $|\bfw -\bfa '|_\infty \leq 1$, i.e.
\begin{equation}\label{mmm0}
d_\infty\bigl((2K+1) \bfw, (2K+1) \bfa'\bigr)\leq 2K+1\,.
\end{equation}
Due to \eqref{mmm} and \eqref{mmm0} we conclude that $b \subset
\D_{\bfa ', 10K}= \L_{(2K+1)\bfa', 10 K}  $.
 On the other hand,
$\bfC_{\bfa '} ^*=\bfC_\bfa ^*$ and by definition of the set of bad
points we get  that $\D_{\bfa', 10K}\subset B(L)$. The above
observations imply that $b \subset B(L)$ in contradiction with the
fact that $b$ intersects the set of good points $G_N$. This
concludes the proof of our claim: $|\bfC_\bfa ^*|\leq L$.

\smallskip

We define $|\g_{u,v}|_*:=\sum_{i=0}^{n-1} \bbI( \{u_i, u_{i+1} \} \cap G_N \not = \emptyset)$.
 We claim that for almost all conductance field $\o$  there exists a constant $c(K,L)$ depending only on $K$ and $L$
  such that, for
all $(x,y,z,u,v)$ as in \eqref{scuola},   $|\g_{u,v}|_*\leq c \e N$. Indeed, by the above claim,
we get that
$$ | \g_{u,v}|_* \leq  |\L _{5K/4} |  \sum _{\bfa \in \bfA_{u,v} } |\bar C ^*_\bfa
 |  \bbI( | \bfC _\bfa ^*|\leq L)\leq c(d) L |\L _{5K/4} |  \, |\bfA_{u,v}|\leq C(K,L) \e N\,.
$$
\smallskip

 Given a bond $b \in \bbE_d$
let us estimate the cardinality of the set $\bbX(b)$, given by the
strings $(x,y,z,u,v)$ with $x,y,z,u,v$ as in the l.h.s. of
\eqref{scuola}, such that $b$ is a bond of the path $\g_{u,v}$ and
$b$ intersects $G_N$.
Up to now we know that   there
exist $\bfw, \bfa $ such that $b$ intersects $\D_{\bfw, 5K/4}$,
$\bfa \in \bfA _{u,v}$,  $\bfw\in \bar \bfC_\bfa ^*$ and $|\bfC_\bfa
^*|\leq L$. In particular, it must be
\begin{multline}
 d_\infty ( b, (2K+1) \bfA _{u,v} )\leq\\
d_\infty( b, (2K+1) \bfw)+(2K+1) d_\infty(  \bfw, \bfC _\bfa)+ (2
K+1) \text{diam} ( \bfC_\bfa
 ^*)\leq \\ 5K/4+(2K+1)(1+L)\,.
 \end{multline}
Hence, if $(x,y,z,u,v)\in \bbX(b)$, then the distance between $b$
and $(2K+1)\bfA_{u,v}$ is bounded by some constant depending only on
$K$ and $L$. Note that the macroscopic path $\bfA_{u,v}$ has length
bounded by $c \e N /K$.
 Let us consider now the set $\mathbb{Y}(b)$ of macroscopic path $(\bfa _0, \bfa _1,
 \dots, \bfa _k)$ such that
 \begin{align}
 & k \leq c \e N/K\,,\\
 &  d_\infty\left(
 b, (2K+1)\bigl\{\bfa _0, \bfa _1,
 \dots,
 \bfa _k\bigr\}\right)\leq 5K/4+(2K+1)(1+L)\,.
\end{align}
By the same computations used in the proof of the standard Moving
Particle Lemma, one easily obtains that $|\mathbb{Y}(b)|\leq  c(K,L)
(\e N)^{d+1}$. Fixed  a macroscopic path in $\mathbb{Y}(b)$  there
are  at most $c(K) $ ways to choose microscopic points  $u,v$  such
that the macroscopic path equals $\bfA_{u,v}$ . Fixed also $u$ and
$v$, we have at most $ \ell_*^{2d}$ ways to choose $(y,z)$ as in
\eqref{scuola}. Fixed $(y,z)$ we have at most $c(\e N)^d$ ways to
choose $x$ as in \eqref{scuola}. Hence $ |\bbX (b)| \leq c(K,L) (\e
N) ^{2d+1} \ell_*^{2d}$. Finally, recall that the length of
$\g_{u,v}$ is bounded by $c\e N$.
 Since moreover all paths $\gamma_{u,v}$ are in $\cC_\a$, from the above observations and from \eqref{ghiaccio}
  we derive that
\begin{multline}
\text{l.h.s. } of \eqref{scuola} \leq \frac{(c \e N)
\e^{-2}\varphi(\rho_*)}{N^d (\e N)^{2 d} \ell_* ^{2d} }\sum _{b \in
\bbE_d} |\bbX (b) | \nu_{\rho_*}( (\nabla_b
\sqrt{f})^2 ) \leq \\
  \frac{c(K,L) (\e N) ^{2d+2} \e^{-2}\ell_*^{2d}}{ N^d (\e N)^{2 d}
\ell_* ^{2d} \a}
 \cD (f)\leq C(K,L,\a ) N^{2-d}  \cD (f) \leq C(K,L,\a)C_0 \,.
\end{multline}
\end{proof}

Since $\a$ is fixed, we will stress only  the dependence of the
constant $\k(L,\a)$ in  Lemma \ref{media}  on $L$ writing simply
$\k(L)$.
 We introduce  the function
\begin{multline*} \cZ(\eta):=\frac{1}{N^d (\e N)^{2 d}}
 \sum _{x \in \L_{N} } \sum _{ y
\in \L_{x, \e N} } \sum_{\substack{ z \in \L_{x, \e N }:\\
|z-y|_\infty \geq 2\ell } }  \\
 \Big| \frac{\cN( \G_{y, \ell, \a} ) }{\ell_*^d}- \frac{\cN (\G_{z,
\ell, \a} )}{\ell_*^d} \Big|\bbI ( \cN( \G_{y, \ell, \a}\cup
\G_{z,\ell,\a} )\leq A \ell_*^d)\,.
\end{multline*}
 Recall that we need to prove \eqref{piedone} where
$\Upsilon ^* _{C_0,N} $ is substituted by the family
$\Upsilon^\sharp _{C_0,N}$ defined before \eqref{ghiaccio}. Due to
the above Lemma, we can substitute $\Upsilon^\sharp_{C_0,N}$ with
the family $\Upsilon^\natural_{C_0/\k(L),N}$ of measurable functions
$f : \bbN^{\cC(\o) }\rightarrow [0,\infty)$ such that $\nu(f)=1$ and
such that the l.h.s. of \eqref{scuola} is bounded by $C_0/\k(L) $.
Namely, we need to prove that, given $\a \in (0, \a_0]$ and  $A>0$,
for $\bbQ$--a.a. $\o$  it holds
\begin{equation}
\label{prendo} \limsup _{N \uparrow \infty,\e\downarrow 0, \ell
\uparrow \infty, L\uparrow \infty}\sup_{f \in
\Upsilon^\natural_{C_0/\k(L),N}} \nu_{\rho_*}\bigl( \cZ (\eta)
f(\eta) \bigr)=0\,.
\end{equation}
Since by Lemma \ref{media} $$\sup_{f \in
\Upsilon^\natural_{C_0/\k(L),N}} \nu_{\rho_*} (\sqrt{f},-
\frac{\e^{-2} \k (L)}{N^d (\e N)^{2 d} \ell_*^{2d}}
 \sum _{x \in \L_{N} } \sum _{ y
\in \L_{x, \e N} } \sum_{\substack{ z \in \L_{x, \e N }:\\
|z-y|_\infty \geq 2\ell } }  \sum _{u \in \G_{y, \ell, \a} } \sum_{v
\in \G_{z, \ell, \a } } \nabla_{u,v}  \sqrt{f})\leq C\,,
$$
 we only need to prove that, given $\a
\in (0, \a_0]$ and  $A,\gamma
>0$, for $\bbQ$--a.a. $\o$ it holds
\begin{equation}\label{nuvole}
\limsup _{N \uparrow \infty,\epsilon\downarrow 0,  \ell \uparrow
\infty, L\uparrow \infty} \sup spec_{L^2 (\nu_{\rho_*})} \Big \{
\cZ (\eta) + \frac{\g \k (L)}{ \e^{2} \ell^{2d}} \sum _{u \in \G_{y,
\ell, \a} } \sum_{v \in \G_{z, \ell, \a } } \nabla_{u,v} \Big]
\Big\} \leq 0\,,
\end{equation}
where $\sup spec_{L^2 (\nu_{\rho_*})}  (\cdot)$ denotes the supremum
of the spectrum in $L^2(\nu_{\rho_*})$ of the given operator. Now we
use the subadditivity property
$$\sup spec_{L^2(\nu_{\rho_*})}(\sum_i
X_i) \leq \sum_i \sup spec_{L^2(\nu_{\rho_*})}( X_i)$$ to bound the
l.h.s. of \eqref{nuvole} by
\begin{multline}\label{motori}
\limsup _{N \uparrow \infty,\e\downarrow 0,  \ell \uparrow \infty,
L\uparrow \infty}
 \frac{1}{N^d (\e N)^{2 d}  }
 \sum _{x \in \L_{N} } \sum _{ y
\in \L_{x, \e N} } \sum_{\substack{ z \in \L_{x, \e N }:\\
|z-y|_\infty \geq 2\ell } }  \sup spec_{L^2 (\nu_{\rho_*})} \Big \{
\Big| \frac{\cN( \G_{y, \ell, \a} )}{\ell_*^d} -\frac{ \cN (\G_{z,
\ell, \a} )}{\ell_*^d}
\Big|\cdot \\
\bbI ( \cN( \G_{y, \ell, \a}\cup \G_{z,\ell,\a} )\leq A \ell_*^d ) +
\frac{\g \k(L)}{ \e^{2} \ell_*^{2d}} \sum _{u \in \G_{y, \ell, \a} }
\sum_{v \in \G_{z, \ell, \a } } \nabla_{u,v} \Big\}\,.
\end{multline}
We observe that the operator inside the $\{\cdot\}$--brackets
depends only on $\eta$ restricted to $\G_{y,z}:= \G_{y, \ell, \a}
\cup \G_{z, \ell, \a}$. By calling $\nu_{k, y, z}$ the canonical
measure on $\cS _{k,y,z}:=\{\eta \in \bbN^{\G_{y,z}} : \cN (
\G_{y,z} )=k\}$ obtained by conditioning the marginal of
$\nu_{\rho_*}$ on $\bbN ^{\G_{y,z}}$ to the event
$\{\cN(\G_{y,z})=k\}$, we can bound \eqref{motori} by
\begin{multline}\label{molecolari}
\limsup _{N \uparrow \infty,\e\downarrow 0,  \ell \uparrow \infty,
L\uparrow \infty}
 \frac{1}{N^d (\e N)^{2 d}  }
 \sum _{x \in \L_{N} } \sum _{ y
\in \L_{x, \e N} } \sum_{\substack{ z \in \L_{x, \e N }:\\
|z-y|_\infty \geq 2\ell_* } }  \sup_{k\in\{0,1,\dots , A \ell_*^d
\}}\\ \sup spec_{L^2 (\nu_{k,y,z} )} \Big \{ \Big| \frac{\cN( \G_{y,
\ell, \a} )}{\ell_*^d}  - \frac{ \cN (\G_{z, \ell, \a} )}{\ell_*^d}
\Big| + \frac{\g \k (L)}{ \e^{2} \ell_*^{2d}} \sum _{u \in \G_{y,
\ell, \a} } \sum_{v \in \G_{z, \ell, \a } } \nabla_{u,v} \Big\}\,.
\end{multline}
Given integers  $k, n_1,n_2 \in \bbN$, define for $j=1,2$ the set
$\G_j :=\{1,2, \dots, n_j\}$, with the convention that $\G_j =
\emptyset $ if $n_j=0$. Then define
  the space
$$ \cS_{k,n_1,n_2} :=\{ (\z_1,\z_2 )\in \bbN^{\G_1} \times
\bbN^{\G_2}: \sum _{a_1 \in \G_1} \z_1(a_1) + \sum_{a_2 \in \G_2}
\z_2(a_2) =k\}
$$
and set $\cN(\z_i) = \sum_{a_i \in \G_i} \z_i (a_i)$.
 Finally, call $\nu_{k, n_1,n_2}$ the probability measure on
$\cS_{k,n_1,n_2}$ obtained by first taking the product measure on
$\bbN^{\G_1 }\times \bbN^{\G_2}$ with the same marginals as
$\nu_{\rho_*}$, and afterwards by conditioning this product measure
to the event that the total number of particles is $k$. Finally,
define
\begin{equation}
F(k,n_1,n_2,\e,\ell,L ):= \sup spec_{L^2 (\nu_{k, n_1,n_2} )} \Big
\{ \ell_*^{-d} \bigl|  \cN (\z_1)- \cN (\z_2)  \bigr| +\frac{ \g \k
(L) }{ \e^{2}\ell_*^{2d}} \sum _{u \in \G_1 } \sum_{v \in \G_2 }
\nabla_{u,v} \Big\}\,.
 \end{equation}
Note that the operator $ \g \k (L)  \ell_*^{-2d} \sum _{u \in \G_1 }
\sum_{v \in \G_2 } \nabla_{u,v}$ is the Markov generator of a
process on $\cS_{k,n_1,n_2}$ such that the measure $\nu_{k,n_1,n_2}$
is reversible and ergodic. In particular, $0$ is a simple
eigenvalue for this process. 
Fixed $\ell$, we will vary the triple $(k,n_1,n_2)$ in a finite set,
more precisely we will take $n_1,n_2\leq \ell_*^d$ and $0\leq k \leq
 A \ell_*^d $. Then,  applying  Perturbation Theory (see  Corollary 1.2 in Appendix 3.1 of
\cite{KL}), we conclude that
\begin{equation}\label{emozioni}
 \limsup_{\e\downarrow 0} \sup_{k,n_1,n_2}\Big |
 F(k,n_1,n_2, \e,\ell,L)- G(k,\ell,n_1,n_2)\Big | =0
\end{equation}
where
\begin{equation}
G(k,\ell,n_1,n_2)= \nu _{k,n_1,n_2} \bigl(\ell_*^{-d} \bigl|  \cN
(\z_1)- \cN (\z_2)  \bigr| \bigr)\,.
\end{equation}
The above result implies that in order to prove that
\eqref{molecolari} is nonnegative we only need to show that
 \begin{multline}\label{sole}
\limsup _{N \uparrow \infty,\e\downarrow 0,  \ell \uparrow \infty,
L\uparrow \infty}
 \frac{1}{N^d (\e N)^{2 d}  }
 \sum _{x \in \L_{N} }
\sum _{ y\in \L_{x, \e N} } \sum_{z \in \L_{x, \e N} }
\sup_{k\in\{0,1,\dots ,  A\ell_*^d \}} G (k,\ell,n_y,n_z)\leq 0\,,
\end{multline}
where $$ n_y=  |\G_{y,\ell, \a }|\,, \qquad n_z= |\G_{z,
\ell,\a}|\,. $$

\begin{Le} Given $\d
>0$, for $\bbQ$--a.a. $\o$ it holds
\begin{multline}\label{universo}
\limsup _{N \uparrow \infty,\e\downarrow 0,  \ell \uparrow \infty, L
\uparrow \infty }
 \frac{1}{N^d (\e N)^{2 d}  }
 \sum _{x \in \L_{N} }
\sum _{ y\in \L_{x, \e N} } \sum_{z \in \L_{x, \e N} }\\ \bbI \bigl(
|n_y /\ell_*^d  - m_\a|>\d \text{ or }  |n_z /\ell_*^d  - m_\a|>\d
\bigr)=0\,,
\end{multline}
where $m_\a= \bbQ ( 0\in \cC_\a)$. \end{Le}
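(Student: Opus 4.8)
The key observation is that the summand is deterministic given the environment: it depends only on the geometry of $\cC_\a$ and of the bad set $B(L)$, not on the particle configuration. So this is a purely ergodic statement about $\bbQ$, and I plan to prove it on a single full $\bbQ$-measure set of $\o$ on which the multidimensional ergodic theorem applies to all the (countably many) box-averages below and on which Lemma~\ref{dindondan} holds. First I would reduce the triple sum to a single box-density average. By the union bound the double indicator is at most $\bbI(|n_y/\ell_*^d-m_\a|>\d)+\bbI(|n_z/\ell_*^d-m_\a|>\d)$; since $y$ and $z$ play symmetric roles the triple sum is bounded by twice the sum carrying only the $y$-indicator, summing over $z\in\L_{x,\e N}$ produces the factor $(2\e N+1)^d$, and exchanging the $x$- and $y$-summations (each $y$ lies in $\L_{x,\e N}$ for at most $(2\e N+1)^d$ indices $x\in\L_N$, and only for $y\in\L_{(1+\e)N}$) reduces the claim to
\begin{equation*}
\limsup_{N\uparrow\infty,\e\downarrow0,\ell\uparrow\infty,L\uparrow\infty}\frac{c}{N^d}\sum_{y\in\L_{(1+\e)N}}\bbI\bigl(|n_y/\ell_*^d-m_\a|>\d\bigr)=0
\end{equation*}
for a universal constant $c$.

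Next I would split off the bad set. Writing $A_y:=|\L_{y,\ell}\cap\cC_\a|/\ell_*^d$ and $D_y:=|\L_{y,\ell}\cap\cC_\a\cap B(L)|/\ell_*^d\leq|\L_{y,\ell}\cap B(L)|/\ell_*^d$, one has $n_y/\ell_*^d=A_y-D_y$, so $|n_y/\ell_*^d-m_\a|\leq|A_y-m_\a|+D_y$ and it suffices to bound the averages of $\bbI(|A_y-m_\a|>\d/2)$ and of $\bbI(D_y>\d/2)$ separately. For the cluster term I would use translation covariance of $\cC_\a$ to write $A_y=\theta_\ell(\t_y\o)$ with $\theta_\ell(\o):=|\L_\ell\cap\cC_\a|/\ell_*^d$, so the average equals $N^{-d}\sum_{y\in\L_{(1+\e)N}}h_\ell(\t_y\o)$ with $h_\ell:=\bbI(|\theta_\ell-m_\a|>\d/2)$ bounded and measurable. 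Since $\bbQ$ is a translation-ergodic product measure, the multidimensional ergodic theorem gives $|\L_{(1+\e)N}|^{-1}\sum_{y}h_\ell(\t_y\o)\to\bbQ(h_\ell)$, so the inner $\limsup_N$ equals $(2(1+\e))^d\,\bbQ(|\theta_\ell-m_\a|>\d/2)$; letting $\e\downarrow0$ fixes the constant, and since $\theta_\ell\to m_\a$ $\bbQ$-a.s. as $\ell\uparrow\infty$ (again by the ergodic theorem), dominated convergence forces $\bbQ(|\theta_\ell-m_\a|>\d/2)\to0$.

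For the bad-set term I would use Markov's inequality together with $D_y\leq|\L_{y,\ell}\cap B(L)|/\ell_*^d$ to get $\bbI(D_y>\d/2)\leq(2/\d)\,|\L_{y,\ell}\cap B(L)|/\ell_*^d$; summing over $y$ and exchanging orders (each site of $B(L)$ lies in at most $\ell_*^d$ boxes $\L_{y,\ell}$) yields
\begin{equation*}
\frac{1}{N^d}\sum_{y\in\L_{(1+\e)N}}\bbI(D_y>\d/2)\leq\frac{2}{\d}\,\frac{|B(L)\cap\L_{(1+\e)N+\ell}|}{N^d}.
\end{equation*}
Taking $N\uparrow\infty$ with $\e,\ell,L$ fixed bounds the inner limit by $(2/\d)(2(1+\e))^d\,b(L)$ with $b(L):=\limsup_N|B(L)\cap\L_N|/|\L_N|$; then $\e\downarrow0$ and $\ell\uparrow\infty$ leave this unchanged up to constants, and $L\uparrow\infty$ sends it to $0$ because $\limsup_L b(L)=0$ by Lemma~\ref{dindondan}. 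Combining the two terms proves the claim.

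The hard part is not any single estimate but respecting the prescribed nesting of the four limits: the ergodic theorem only supplies the averaging limit (over $y$) for \emph{fixed} $\ell$, whereas the local density $n_y/\ell_*^d$ is defined through $\ell$ itself, so the averaging over $y$ must be carried out \emph{before} sending $\ell\uparrow\infty$. This is exactly why the statement is phrased with $N$ innermost and $\ell,L$ outermost, and it is the reason the bad-set contribution can be absorbed only after first passing to the asymptotic density $b(L)$ via Lemma~\ref{dindondan}.
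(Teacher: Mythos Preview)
Your proposal is correct and follows essentially the same route as the paper: both arguments split $n_y$ into the cluster density $|\L_{y,\ell}\cap\cC_\a|/\ell_*^d$ minus the bad-set overlap, handle the latter via Markov's inequality together with Lemma~\ref{dindondan}, and treat the former by the ergodic theorem (first averaging over $y$ at fixed $\ell$, then letting $\ell\uparrow\infty$). Your reduction of the triple sum to a single box average and your discussion of the limit nesting are more explicit than the paper's, but the underlying decomposition and estimates coincide.
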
 \begin{proof} Recall
definition \eqref{zaire} and  set $N_y=|\cC_\a\cap \L_{y,\ell}|$,
$N_z=|\cC_\a\cap \L_{z,\ell}|$. Then \begin{multline*} \frac{1}{N^d
(\e N)^{2 d}  }
 \sum _{x \in \L_{N} }
\sum _{ y\in \L_{x, \e N} } \sum_{z \in \L_{x, \e N} } \bbI \bigl(
|n_y- N_y | >\d \ell_*^d \bigr)\leq\\\frac{c }{N^d (\e N)^{d}  }
 \sum _{x \in \L_{N} }
\sum _{ y\in \L_{x, \e N} }  \bbI(|B(L) \cap \L_{y,\ell} |> \d
\ell_*^d)\leq\\\frac{c }{\d N^d (\e N)^{d} \ell_*^d  }
 \sum _{x \in \L_{N} }
\sum _{ y\in \L_{x, \e N} } |B(L) \cap \L_{y,\ell} |\leq \frac{C
}{\d N^d}  |B(L) \cap \L_{N+\e N} |\,.
\end{multline*}
Due to the above bound,  the same  bound for  $n_z$ and $N_z$, Lemma
\ref{dindondan} and finally a $\d/2$--argument,  we conclude that it
is enough to prove \eqref{universo} substituting  $n_y$ and $n_z$
with $N_y$ and $N_z$ respectively. Moreover, due to ergodicity, for
$\bbQ$--a.a. $\o$ it holds
 \begin{multline}
\limsup _{N \uparrow \infty,\e\downarrow 0,  \ell \uparrow \infty}
 \frac{1}{N^d (\e N)^d }
 \sum _{x \in \L_{N} }
\sum _{ y\in \L_{x, \e N} }
\bbI \bigl( |N_y    /\ell_*^d  - m_\a|>\d\bigr) \\
\leq c
\limsup _{N \uparrow \infty ,   \ell \uparrow \infty}
 \frac{1}{N^d  }
 \sum _{y \in \L_{2N} }
\bbI \bigl( |N_y    /\ell_*^d  - m_\a|>\d\bigr)= c\limsup _{   \ell
\uparrow \infty} \bbQ \bigl( |N_y    /\ell_*^d  - m_\a
|>\d\bigr)=0\,.
\end{multline}
A similar bound can be obtained for $z$ instead of $y$, thus
implying \eqref{universo}.
\end{proof}
Since $G(k,\ell, n_1,n_2) \leq A$ and since
 \begin{multline}\label{suono} G(k,\ell, n_1,n_2) \leq  \nu_{k
,n_1 ,n_2 }\Big( \Big| \frac{\cN(\z_1)}{\ell_* ^d} - \frac{
k}{n_1+n_2  } \frac{n_1}{\ell_*^d }\Big|\Big)+\\  \nu_{k ,n_1 ,n_2
}\Big( \Big| \frac{\cN(\z_2)}{\ell_* ^d} - \frac{ k}{n_1+n_2  }
\frac{n_2}{\ell_*^d }\Big|\Big)+ A  \frac{ |n_1-n_2|}{n_1+n_2}\,,
\end{multline} by the above Lemma in order to prove \eqref{sole}  we
only need to prove for $j=1,2$ that
 \begin{equation}\label{soleaia}
\limsup _{  \ell \uparrow \infty,  \d \downarrow 0}
\sup_{(k,n_1,n_2)\in \cI }
 \nu_{k ,n_1
,n_2 }\Big( \Big| \frac{\cN(\z_j)}{\ell_* ^d} - \frac{ k}{n_y+n_2  }
\frac{n_j}{\ell_*^d }\Big|\Big) = 0\,,
\end{equation}
where \begin{multline} \cI=\Big\{ (k,n_1, n_2 ) \in \bbN ^3:\\
\frac{k}{\ell_*^d}\leq A\,, \; \frac{n_1}{\ell_*^d} \in [m_\a-\d,
(m_\a+\d) \wedge 1]\,,\; \frac{n_2}{\ell_*^d} \in [m_\a-\d,
(m_\a+\d)\wedge 1] \Big\}\,.\end{multline} At this point
\eqref{sole} derives from the local central limit theorem as in
 in \cite{KL}[page 89, Step 6].

\section{Proof of the One Block Estimate}\label{F1}

We use here several arguments developed in the previous section. In
order to avoid repetitions, we will only sketch the proof. As
before, for simplicity of notation we take $M=1$.

Let us define $m_\a:= \bbQ (0 \in \cC_\a)$. Note that $m=\lim_{\a
\downarrow 0 }m_\a$. Setting $ a=\eta^\ell (y)$, $b= \cN ( \cC_\a
\cap \L_{y, \ell})/\ell_*^d$, since $\phi$ is Lipschitz  we can
bound
\begin{multline*}
\Big| m \phi(\frac{a}{m})-m_\a \phi( \frac{b}{m_\a} ) \Big| \leq
|m-m_\a|\phi(\frac{a}{m}) +m_\a|\phi(\frac{a}{m} )-\phi(\frac{b}{m}
) \bigr|+ m_\a|\phi(\frac{b}{m} )-\phi(\frac{b}{m_\a} ) \bigr|\leq
\\
\frac{|m-m_\a|}{m} g^* a + g^* \frac{m_\a}{m}|a-b|+ g^* b
m_\a\bigl|\frac{1}{m}-\frac{1}{m_\a} \bigr|\leq \\
\frac{|m-m_\a|}{m} g^* \eta^\ell (y) + g^* \frac{m_\a}{m \ell_*^d
}\cN(\L_{y,\ell} \setminus \cC_\a) + g^*\cN ( \cC_\a \cap \L_{y,
\ell})\ell_*^{-d} m_\a\bigl|\frac{1}{m}-\frac{1}{m_\a}
\bigr|=:\cG(\eta,\o )\,.
\end{multline*}
 Note that $\cG(\eta)$  is increasing in $\eta$. Hence,  using that
 $f(\eta)
\nu_{\rho_*}(d \eta) \prec f(\eta) \nu_{\rho_0} (d \eta)$, we easily
obtain that
\begin{equation}\label{1blockbla}
\limsup _{N\uparrow \infty, \ell \uparrow \infty, \a \downarrow 0}
\sup_{f\in \Upsilon_{C_0,N} } \int \frac{1}{N^d} \sum _{x \in \L_{N}
} \cG (\t_x \eta, \t_x \o ) f(\eta) \nu _{\rho_*} (d \eta ) = 0 \,.
\end{equation}
Due to the above result and
 reasoning as in the derivation of
\eqref{piedone} where $\Upsilon ^*_{C_0,N}$ can be replaced by
 $\Upsilon ^\sharp_{C_0,N}$ (see the discussion after \eqref{piedone}),
 we only need to prove that given
$C_0>0$, $A>0 $ and $\a\in (0,\a_0]$, for $\bbQ$--a.a. $\o$ it holds
\begin{multline}\label{signorino}
 \limsup_{N\uparrow \infty, \ell\uparrow\infty, L\uparrow \infty}
\sup_{f\in \Upsilon^\sharp _{C_0,N} }  \\
\int Av_{x \in \L_N}  \Big| \ell_*^{-d} \sum _{y \in \G_{x,\ell, \a}
} g (\eta (y)) - m_\a \phi  \bigl(
 \cN ( \G_{x,\ell, \a} )/m_\a  \ell_*^d \bigr)\Big| \bbI\bigl(  \cN ( \G_{x,\ell, \a} )\leq A \ell_*^d\bigr) f(\eta)\nu_{\rho_*} (\eta)  =0\,.
\end{multline}
At this point by the same arguments of Lemma \ref{media}, one can
prove that
\begin{Le}\label{sinfonia}
For $\bbQ$--a.a. $\o$ the following holds. Fixed $\a\in (0,\a_0]$
and $L>0$,  there exists a positive constant $\k=\k(\a,L)$ such that
\begin{multline}\label{scuolabus}
\varphi(\rho_*) N^2 \ell_*^{-2d-2} Av _{x \in \L_{N} }  \sum_{u \in
\G_{x, \ell, \a} } \sum_{v\in \G_{x, \ell, \a}  } \nu_{\rho_*}
\bigl( (\nabla_{u,v} \sqrt{f})^2 \bigr)   \leq
  N^{-d+2}  \cD(f)/\k \leq C_0/\k  \,,
\end{multline}
for any function $f \in \Upsilon^\sharp_{C_0,N}$ and for any
$N,\ell,C_0$.
\end{Le}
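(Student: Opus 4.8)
The plan is to repeat, essentially verbatim, the argument of Lemma \ref{media}, the only structural change being that now both endpoints $u,v$ lie in the \emph{same} block $\G_{x,\ell,\a}\subset\L_{x,\ell}\cap\cC_\a$, so that the connecting paths are governed by the scale $\ell$ rather than by $\e N$. First I would fix $u,v\in\G_{x,\ell,\a}$ and join them by the path $\g_{u,v}\subset\cC_\a$ supplied by the construction preceding \eqref{passaggio}. Writing $\g_{u,v}=(u=u_0,u_1,\dots,u_n=v)$, a telescoping followed by Schwarz inequality gives
\begin{equation*}
\nu_{\rho_*}\bigl((\nabla_{u,v}\sqrt f)^2\bigr)\leq |\g_{u,v}|_*\sum_{i=0}^{n-1}\nu_{\rho_*}\bigl((\nabla_{u_i,u_{i+1}}\sqrt f)^2\bigr),
\end{equation*}
where $|\g_{u,v}|_*$ counts the bonds of $\g_{u,v}$ meeting $G_N$; all other bonds $b$ satisfy $\nabla_b\sqrt f=0$ by $\cF_N$--measurability of $f\in\Upsilon^\sharp_{C_0,N}$ and may be discarded.

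The two combinatorial estimates of Lemma \ref{media} carry over unchanged. Because $|u-v|_\infty\leq 2\ell$, the associated macroscopic path $\bfA_{u,v}$ has length $\lesssim \ell/K$; the very same argument showing that a bond $b$ of $\g_{u,v}$ meeting $G_N$ forces $|\bfC^*_{\bfa}|\leq L$ then yields $|\g_{u,v}|_*\leq c(K,L)\,\ell$. For the multiplicity, I would bound, for a fixed bond $b$ meeting $G_N$, the number $|\bbX(b)|$ of triples $(x,u,v)$ with $u,v\in\G_{x,\ell,\a}$ and $b$ a bond of $\g_{u,v}$: the macroscopic paths of length $\lesssim \ell/K$ passing within distance $c(K,L)$ of $(2K+1)\bfA_{u,v}$ number at most $c(K,L)\,\ell^{d+1}$ (the count being identical to that for $\bbY(b)$ in Lemma \ref{media}, with $\e N$ replaced by $\ell$); each such macroscopic path admits $\leq c(K)$ microscopic pairs $(u,v)$; and, $u,v$ being fixed, the constraint $u\in\L_{x,\ell}$ leaves at most $\ell_*$ choices of $x$. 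Hence $|\bbX(b)|\leq c(K,L)\,\ell^{d+1}\ell_*$.

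The final step is to pass from the micro-gradients back to $\cD(f)$. Recall that $\cD(f)=\nu_{\rho_*}(\sqrt f,-\cL\sqrt f)$ is, up to the factor $\tfrac12$, the sum over neighbouring pairs $(x,y)$ of $\o(x,y)\,\nu_{\rho_*}\bigl[g(\eta_x)(\sqrt f(\eta^{x,y})-\sqrt f(\eta))^2\bigr]$; restricting to the bonds of $\cC_\a$, where $\o(b)>\a$, and using the identity \eqref{ghiaccio}, one obtains $\varphi(\rho_*)\sum_{b\in\cC_\a}\nu_{\rho_*}\bigl((\nabla_b\sqrt f)^2\bigr)\leq c\,\a^{-1}\cD(f)$. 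Combining the prefactor $\varphi(\rho_*)N^2\ell_*^{-2d-2}$, the average $Av_{x\in\L_N}$ (a factor $|\L_N|^{-1}\sim N^{-d}$), the path length $c(K,L)\ell$ and the multiplicity $c(K,L)\ell^{d+1}\ell_*$, the $N$--powers produce exactly $N^{2-d}$ while the $\ell$--powers collapse to $\ell_*^{-2d-1}\ell^{d+2}\sim\ell^{\,2-2d^2}$, which for $d\geq 1$ is bounded (indeed decaying in $\ell$). Thus the left--hand side of \eqref{scuolabus} is at most $C(K,\a,L)\,N^{2-d}\cD(f)$; choosing $\k=\k(\a,L):=1/C(K,\a,L)$, with $K$ fixed once and for all by $\a$, gives the first inequality, and $\cD(f)\leq C_0N^{d-2}$ for $f\in\Upsilon^\sharp_{C_0,N}$ gives the second.

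The only point requiring genuine care --- precisely as in Lemma \ref{media} --- is the excision of the bad set $B(L)$, which is what keeps both the path length and the multiplicity $|\bbX(b)|$ uniformly controlled; once that mechanism is in place there is no new obstacle here, since the single--block geometry (with $|u-v|_\infty\leq 2\ell$) is if anything simpler than the two--block one, and the acceleration factor $N^2$ now enters directly rather than through the identity $\e^{-2}(\e N)^2=N^2$.
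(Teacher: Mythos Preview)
Your proposal is correct and follows exactly the approach the paper intends: the paper does not spell out a proof of Lemma~\ref{sinfonia} at all, merely writing ``by the same arguments of Lemma~\ref{media}, one can prove that\dots'', and you have reconstructed precisely that adaptation --- replacing the two-block scale $\e N$ by the single-block scale $\ell$ in the path-length bound, in the counting of $|\bbX(b)|$, and in the final power accounting. The mechanism (excising $B(L)$ so that any bond of $\g_{u,v}$ touching $G_N$ forces $|\bfC^*_\bfa|\leq L$, which in turn controls both $|\g_{u,v}|_*$ and the distance from $b$ to the macroscopic spine $\bfA_{u,v}$) is identical, and your observation that the geometry is actually simpler here is well taken.
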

Due to the above lemma, as in the derivation of \eqref{nuvole} we
only need to prove that given $\a \in (0, \a_0]$, $A, \g>0$, for
$\bbQ$--a.a. $\o$ it holds:
\begin{multline}\label{nuvolebis}
\limsup _{N \uparrow \infty,  \ell \uparrow \infty, L\uparrow
\infty}
\sup spec_{L^2 (\nu_{\rho_*})} \\
\Big\{
 Av_{x \in \L_N}  \Big|
\ell_*^{-d} \sum _{y \in \G_{x,\ell, \a} } g (\eta (y)) -m_\a \phi
\bigl(
 \cN ( \G_{x,\ell, \a} )/m_\a  \ell_*^d  \bigr)\Big| \bbI\bigl(  \cN ( \G_{x,\ell, \a} )\leq A \ell_*^d\bigr)
 \\ + \frac{\g \k (L) N ^2}{ \ell^{2d+2}} \sum _{u
\in \G_{x, \ell, \a} } \sum_{v \in \G_{x, \ell, \a } } \nabla_{u,v}
\Big] \Big\} \leq 0\,.
\end{multline}
Using subadditivity as in the derivation of \eqref{motori} we can
bound the above l.h.s. by
\begin{multline}\label{motoribis}
\limsup _{N \uparrow \infty, \ell \uparrow \infty, L\uparrow \infty}
Av_{x \in \L_N}
\sup spec_{L^2 (\nu_{\rho_*})} \\
\Big\{
 \Big|
\ell_*^{-d} \sum _{y \in \G_{x,\ell, \a} } g (\eta (y)) - m_\a \phi
\bigl(
 \cN ( \G_{x,\ell, \a} )/m_\a \ell_*^d \bigr)\Big| \bbI \bigl(  \cN ( \G_{x,\ell, \a} )\leq A \ell_*^d\bigr)
 \\ +\frac{\g \k(L)  N ^2 }{\ell^{2d+2}} \sum _{u
\in \G_{x, \ell, \a} } \sum_{v \in \G_{x, \ell, \a } } \nabla_{u,v}
\Big] \Big\} \,.
\end{multline}
Again by conditioning on the number of particles in $\G_{x,\ell,\a}$
and afterwards applying perturbation theory (see \eqref{molecolari},
\eqref{emozioni} and \eqref{sole}), one only needs to show that
\begin{equation}\label{supremo}
 \limsup _{N \uparrow \infty, \ell \uparrow \infty, L\uparrow \infty}
Av_{x \in \L_N}  \sup_{k\in\{0,1,\dots, A\ell_*^d\} }  G(k,\ell,
n_x) =0\,,
\end{equation}
where $n_x:= |\G_{x,\a, \ell}|$,  $\nu_{k, n}$ is the measure on $\{
\z \in \bbN^n: \sum_{i=1}^n \z(i) =k\}$ obtained by taking the
product measure with the same marginals as $\nu_{\rho_*}$ and then
conditioning on the event that the total  number of particles
equals $k$, and where
$$ G(k,\ell, n):= \nu_{k,n}\Big[
 \bigl|
\ell_*^{-d} \sum _{i=1  }^n g (\z(i)) -  m_\a \phi ( k/m_\a \ell_*^d
)\bigr|\Big]\,.
$$
One can prove that for $\bbQ$--a.a. $\o$ it holds
\begin{equation}\label{biriccone}\limsup_{N\uparrow \infty, \ell \uparrow \infty, L \uparrow
\infty} Av_{x \in \L_N} \bbI ( |n_x/\ell_*^d - m_\a|>\d)=0\,,
\end{equation}
for each positive constant $\d>0$. As in \cite{KL}[Chapter 5], one
has that
\begin{equation}\label{davidone}
\lim _{\ell \uparrow \infty} \sup_{ (k,n) \in \cJ }\nu_{k,n}\Big[
 \bigl| n^{-1}  \sum _{i=1  }^n g (\z(i)) -   \phi ( k/n
 )\bigr|\Big]=0\,.\end{equation}
 where $$ \cJ:=\Big \{ (k,n) \in \bbN^2\,:\,\frac{k}{\ell_*^d}  \leq A
 \,, \;\frac{n}{\ell_*^d} \in ( m_\a-\d,
(m_\a+\d)\wedge 1)\Big\}\,. $$
 At this point \eqref{supremo} follows
from \eqref{biriccone} and \eqref{davidone}.

\bigskip

\bigskip

\appendix

\section{Zero range process on $\bbZ^d$ with random conductances }\label{cileno}

 Recall that the enviroment $\o= \bigl( \o(b) \,:\, b \in \bbE_d
 \bigr)$ is given by a family of i.i.d. random variables
 parameterized by the set $\bbE_d$ of non--orientied bonds in
 $\bbZ^d$, $d \geq 2$. We denote by $\bbQ$  the law of $\o$,  we assume
 that $\bbQ( \o (b) \in [0,c_0])=1$  and that $\bbQ ( \o (b)>0)$ is supercritical. We fix a function
$g: \bbN \to [0,\infty)$
 as in Subsection \ref{lavinia_giorgia}. Given a realization of $\o$, we consider the
 zero range process $\eta(t)$ on $\bbZ^d$ whose
  Markov
generator $N^2 \cL $ acts on local functions as
\begin{equation}\label{generatorebimbo}
N^2 \cL  f (\eta) =N^2 \sum _{e\in \cB} \sum _{x \in \bbZ^d}
g(\eta(x)) \o(x,x+e)
 \left( f(\eta ^{x,x+e})- f(\eta)\right)\,,
\end{equation}
where $\cB=\{ \pm e_1, \pm e_2, \dots, \pm e_d\}$, $e_1, \dots, e_d$
being the  canonical basis of $\bbZ^d$. Given an {\sl admissible}
initial distribution $\bar \mu^N$ on $\{0,1\} ^{\bbZ^d}$ (i.e. such
that the corresponding zero range process is well defined), we
denote by $\bbP_{\o, \bar \mu^N}$ the law of $(\eta_t\,:\, t \geq 0
)$. Trivially, the zero range process behaves independently on the
different clusters of the conductance field.

If $\bbQ( \o(b)>0)=1$, then $\bbQ$--a.s. the infinite cluster $\cC
(\o)$ coincides with $\bbZ^d$ and the hydrodynamic behavior of the
zero range process  on $\bbZ^d$ is described by Theorem
\ref{annibale}. If $\bbQ(\o (b)>0)<1$ the bulk behavior of the zero
range process on $\bbZ^d$ is different  due to the presence of
finite clusters acting as traps, as we now explain.

First,  we observe  that the finite clusters cannot be too big.
Indeed, as byproduct of Borel--Cantelli Lemma and Theorems (8.18)
and (8.21) in \cite{G},   there exists a positive constant $\g>0$
such that for $\bbQ$--a.a. $\o$ the following property (P1) holds:

\smallskip

\noindent (P1) for each  $N \geq 1$ and each finite cluster $C$
intersecting the box $[-N, N ]^d$, the diameter of $C$ is bounded by
$\g \ln (1+N)$.

\smallskip

\begin{Le}\label{blocco} Suppose that $\o$ has a unique infinite cluster
$\cC(\o)$ and that $\o$ satisfies the above property (P1).
 Let $G \in C_c (\bbR^d)$ and $\eta \in \bbN ^{\bbZ^d}$ be
such that   $ \eta (x)=0$ for all $x \in \cC(\o)$. Call $\D_G$ the
support of $G$ and call $\bar \D_G$ the set of points $z \in \bbR^d$
having distance from $\D_G$ at most $1$.
 Then there exist positive  constants $N_0(G,\g)$, $C(G,\g)$  depending only on  $G$ and $\g$  such that the zero range process on $\bbZ^d$
with initial configuration $\eta$ satisfies a.s.  the following
properties:  $ \eta_t (x)=0$ for all $x \in \cC(\o)$ and, for $N
\geq N_0$,
\begin{equation}\label{aceace}
 \Big| N^{-d}\sum _{x \in \bbZ^d} G (x/N) \eta(x)-N^{-d}\sum _{x
\in \bbZ^d} G (x/N) \eta_t(x)\Big|\leq C(G,\g) \frac{ \ln
(1+N)}{N^{d+1} } \sum _{x \in \bbZ^d: x/N \in \bar{\D}_G} \eta(x)\,.
\end{equation}
\end{Le}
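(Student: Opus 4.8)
The plan is to combine two ingredients: the \emph{blocking effect}, which confines each particle to the finite cluster on which it starts and conserves the number of particles on every connected component of the conductance field, and property (P1), which forces every finite cluster meeting a macroscopic window to have only \emph{logarithmic} diameter. First I would record that, since only finitely many lattice sites $x$ satisfy $x/N\in\D_G$, all the sums below are finite sums, so no integrability issue arises in $X_t:=N^{-d}\sum_x G(x/N)\eta_t(x)$.

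First I would establish the blocking effect. In the generator \eqref{generatorebimbo} a particle jumps from $x$ to $x+e$ at rate $g(\eta(x))\o(x,x+e)$, which vanishes as soon as $\o(x,x+e)=0$. Hence every jump takes place along a bond of positive conductance, i.e.\ within a single connected component (cluster) of the conductance field; the dynamics therefore decouples into independent processes on the distinct clusters, and on each cluster $C$ the total occupation $\cN(C):=\sum_{x\in C}\eta(x)$ is conserved pathwise. On a finite cluster this number is fixed and distributed among finitely many sites, so the dynamics there is a genuine finite‑state Markov chain, which settles well‑posedness. Since $\eta(x)=0$ for every $x\in\cC(\o)$ at time $0$ and no positive‑conductance bond joins a finite cluster to $\cC(\o)$, no particle can ever enter the infinite cluster, whence $\eta_t(x)=0$ for all $x\in\cC(\o)$ and all $t$, almost surely.

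Next I would prove \eqref{aceace}. For each finite cluster $C$ fix a reference site $x_C\in C$. Because $\cN(C)$ is time–invariant we have $\sum_{x\in C}(\eta_t(x)-\eta_0(x))=0$, so subtracting $G(x_C/N)$ times this null sum produces the telescoped identity
\[
X_t-X_0=N^{-d}\sum_{C}\ \sum_{x\in C}\bigl(G(x/N)-G(x_C/N)\bigr)\bigl(\eta_t(x)-\eta_0(x)\bigr),
\]
where the outer sum runs over finite clusters only (on $\cC(\o)$ the occupation is identically zero). A cluster contributes only if $G(x/N)\neq0$ or $G(x_C/N)\neq0$ for some $x\in C$, i.e.\ only if $C$ meets $N\D_G$; writing $\D_G\subset[-R,R]^d$, such a cluster meets $[-RN,RN]^d$, so by (P1) its diameter is at most $\gamma\ln(1+RN)$. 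Choosing $N_0$ so that $\gamma\ln(1+RN)/N\le 1$ for $N\ge N_0$, every site of a contributing cluster lies in $N\bar\D_G$; in particular the contributing clusters are finitely many and pairwise disjoint subsets of $\{x:x/N\in\bar\D_G\}$. Using $|G(x/N)-G(x_C/N)|\le \mathrm{Lip}(G)\,\diam(C)/N$ (the form of the bound \eqref{aceace} requires $G$ Lipschitz, as are the smooth test functions of the main text) together with $\sum_{x\in C}(\eta_t(x)+\eta_0(x))=2\cN(C)$, each contributing cluster is bounded by $2\,\mathrm{Lip}(G)\,\gamma\ln(1+RN)N^{-1}\cN(C)$. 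Summing and using $\sum_{C}\cN(C)\le\sum_{x:\,x/N\in\bar\D_G}\eta(x)$ gives
\[
|X_t-X_0|\le 2\,\mathrm{Lip}(G)\,\gamma\,\frac{\ln(1+RN)}{N^{d+1}}\sum_{x:\,x/N\in\bar\D_G}\eta(x),
\]
and absorbing $\ln(1+RN)\le C\ln(1+N)$ (for $N\ge N_0$, with $C$ depending on $R$, hence on $G$) into the constant yields \eqref{aceace}.

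The argument is elementary once (P1) is available; the only point needing care is the bookkeeping that isolates the clusters meeting $N\D_G$ and guarantees they sit entirely inside $N\bar\D_G$. This is precisely where one uses the threshold $N\ge N_0$ and the fact that the diameters are logarithmic, hence $o(N)$, and it is what forces the constant to depend on $G$ (through $R=\sup_{x\in\D_G}|x|_\infty$) and on $\gamma$.
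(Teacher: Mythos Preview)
Your proof is correct and follows essentially the same route as the paper: decompose into finite clusters, pick a reference point in each, use property~(P1) to bound the cluster diameters by $\gamma\ln(1+N)$, and combine this with the Lipschitz oscillation of $G$ and the pathwise conservation of $\cN(C)$ on each cluster. The only cosmetic difference is that the paper compares both $N^{-d}\sum_x G(x/N)\eta_0(x)$ and $N^{-d}\sum_x G(x/N)\eta_t(x)$ to the common reference sum $N^{-d}\sum_i G(x^N_i/N)\cN(C^N_i)$ and then uses the triangle inequality, whereas you telescope $X_t-X_0$ directly; your remark that the bound \eqref{aceace} really uses a Lipschitz modulus for $G$ (implicit also in the paper's proof) is a fair observation.
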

We point out that the zero range process is well defined when
starting in $\eta$, indeed the dynamics reduces to a family of
independent zero range processes on  the finite clusters, while the
infinite cluster $\cC(\o)$ remains empty.
\begin{proof}
The fact that $\eta_t(x) =0$ with $x \not \in \cC(\o)$ is trivial.
Let us prove \eqref{aceace}. Without loss of generality we can
suppose that $G$ has support in $[- 1, 1]^d$ (the general case is
treated similarly). Let us write $C^N_1,C^N_2, \dots , C^N_{k_N}$
for the family of finite clusters intersecting the box $[-N, N]^d$.
For each cluster $C^N_i$ we fix a point $x^N_i \in C^N_i$. Since by
the property (P1) each $C^N_i$ has diameter at most $\g \ln (1+N)$,
we have
$$ | G(x/N)- G( x^N_i /N) | \leq C(G) \g \ln (1+N) /N\,, \qquad \forall
i: 1\leq i \leq k_N\,, \; \forall x \in C^N_i\,.
$$
The above estimate implies  that
\begin{multline*}
\Big| N^{-d}\sum _{x \in \bbZ^d} G (x/N) \eta_t(x)- N^{-d}\sum
_{i=1}^{k_N} G(x^N_i/N) \sum _{x \in C^N_i  } \eta_t(x) \Big| \leq\\
 C(G) \g N^{-d-1} \ln (1+N)\sum _{i=1}^{k_N} \sum _{x
\in C^N_i  } \eta_t(x)\,.
\end{multline*}
Using now that the number of particles in each cluster is
time--independent and  that  for $N$ large enough $C^N_i \subset
[-2N,2N]$ for all $i =1, \dots, k_N$, we get the thesis.
\end{proof}
As a consequence of  Lemma \ref{blocco}, if $\o$ has a unique
 infinite cluster and if $\o$ satisfies property (P1), then  for
 any admissible initial configuration $\eta_0$ (i.e. such that the zero range process
 on $\bbZ^d$ is well defined when starting in $\eta_0$) and any $G \in C_c (\bbR^d)$,  it holds
 \begin{equation}\label{solareace}
 N^{-d}\sum _{x \in \bbZ^d} G (x/N) \eta_t(x)=
N^{-d} \sum _{x
 \in \cC(\o)} G (x/N) \eta_t(x)+
  N^{-d} \sum _{x \not
 \in \cC(\o)} G (x/N) \eta_0(x)+ o (1)\,,
 \end{equation}
 where  $|o(1)|\leq  C(G,\g)\frac{ \ln
(1+N)}{N^{d+1} } \sum _{x \in \bbZ^d: x/N \in \bar{\D}_G} \eta(x)$.
At this point, denoting by $\bar \mu ^N$ the initial distribution of
 the zero range  process $\eta_t$ on $\bbZ^d$, one can derive the
 hydrodynamic limit of $\eta_t$ if
  the marginals  of $\bar \mu^N$  on $\cC(\o)$ and $\bbZ^d \setminus
 \cC(\o)$, respectively, are associated to  suitable macroscopic
 profiles. In what follows, we discuss a special case where this last property is
 satisfied.

\bigskip

We fix a smooth, bounded nonnegative function $\rho_0 :\bbR^d \to
[0,\infty)$ and for each $N$ we define $\bar \mu ^N$ as the product
probability measure on $\bbN ^{\bbZ^d}$ such that for all $x \in
\bbZ^d$ it holds
\begin{equation}\label{insegno} \bar \mu ^N \bigl( \eta(x)=k \bigr)= \nu _{\rho_0
(x/N) } ( \eta ( x)=k )\,, \end{equation} where $\nu_\rho$ is
defined as in Subsection \ref{lavinia_giorgia} with the difference
that  now it is referred to all $\bbZ^d$ and not only to $\cC(\o)$.

\smallskip

We call  $\mu^N$ the marginal of $\bar{\mu} ^N$ on $\cC(\o)$:
$\mu^N$ is a product probability measure on $\bbN^{\cC (\o) }$
satisfying \eqref{insegno} for all $x \in \cC (\o)$ (note that
$\mu^N$ depends on $\o$). Similarly, we call $\nu _{\rho, \cC(\o) }$
the marginal of $\nu_\rho$ on $\cC(\o) $. By the discussion at the
end of Section \ref{natalino}, if  the smooth profile $\rho_0$
converges sufficiently fast at infinity to a positive constant
$\rho_*$, it holds
\begin{equation}\label{cardio}\limsup _{N\uparrow \infty} N^{-d} H( \mu^{N}|
\nu_{\rho_*, \cC(\o) } ) < \infty\, \qquad \bbQ\text{--a.s.}
\end{equation}

\begin{Th}
Suppose that the bounded smooth profile $\rho_0 : \bbR^d \to
[0,\infty)$ satisfies \eqref{cardio}. Then for all $t>0$, $G \in C_c
(\bbR^d)$ and $\d>0$, for $\bbQ$--a.a. $\o$ it holds
\begin{equation}\label{cammarota1}
\lim _{N\uparrow \infty  } \bbP _{\o, \bar{\mu}^N}  \Big ( \Big|
N^{-d} \sum _{x\in \bbZ^d } G (x /N)\,  \eta  _{ t} (x) - \int
_{\bbR ^d} G (x) \rho (x,t ) dx  \Big|>\d \Big )=0
\end{equation}
where, setting $m = \bbQ ( 0 \in \cC(\o) )$,
\begin{equation}
\rho (x,t)=m  \tilde \rho (x,t)+(1-m) \rho_0 (x)
\end{equation}
and  $\tilde \rho: \bbR^d \times [0,\infty) \rightarrow \bbR$ is the
 unique weak solution of the heat equation
\begin{equation}\label{pasqua3ace}
\partial  _t \tilde \rho =  \nabla \cdot ( \cD \nabla  \phi(\tilde \rho))
\end{equation}
with boundary condition $\tilde\rho_0= \rho $ at $t=0$.
\end{Th}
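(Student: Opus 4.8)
The plan is to split the empirical sum over $\bbZ^d$ into the contribution of the infinite cluster and that of the finite clusters (the traps), and to treat the two contributions separately. Working on the full-measure set of environments $\o$ for which $\cC(\o)$ is the unique infinite cluster, property (P1) holds, and the hypotheses of Theorem \ref{annibale} are satisfied, I would start from the decomposition \eqref{solareace}. The error term there is bounded by $C(G,\g)N^{-d-1}\ln(1+N)\sum_{x:\,x/N\in\bar\D_G}\eta_0(x)$; since $\bar\mu^N$ is a product measure with parameter $\rho_0(x/N)$ as in \eqref{insegno} and $\rho_0$ is bounded, this sum has expectation of order $N^d$ under $\bbP_{\o,\bar\mu^N}$, so the error is $O(N^{-1}\ln N)$ and vanishes in probability. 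It thus suffices to analyse the two surviving terms on the right-hand side of \eqref{solareace}.

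For the first term I would use that particles born on $\cC(\o)$ never leave it, so that the restriction of $\eta_t$ to $\cC(\o)$ is precisely the zero range process on $\cC(\o)$ with generator $N^2\cL$ and initial law $\mu^N$, the marginal of $\bar\mu^N$ on $\cC(\o)$. I then verify the hypotheses of Theorem \ref{annibale} for $\mu^N$: stochastic domination by $\nu_{\rho'}$ with $\rho'=\sup_x\rho_0(x)$ (immediate from monotonicity of $\nu_\rho$ in $\rho$ and boundedness of $\rho_0$), the entropy bound $H(\mu^N|\nu_{\rho_*})\leq C_0N^d$ (this is exactly \eqref{cardio}), and the initial-profile condition \eqref{pasqua1}. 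For the last one, the $\mu^N$--mean of $N^{-d}\sum_{x\in\cC(\o)}G(x/N)\eta(x)$ equals $N^{-d}\sum_{x\in\cC(\o)}G(x/N)\rho_0(x/N)$, which by the multidimensional ergodic theorem converges $\bbQ$--a.s. to $m\int_{\bbR^d}G\rho_0\,dx$, while its variance is $O(N^{-d})$; hence the initial profile of the infinite-cluster process is $m\rho_0$. Applying Theorem \ref{annibale} with this profile, the first term converges in probability to $\int G\,\bar\rho(\cdot,t)\,dx$, where $\bar\rho$ solves \eqref{pasqua3}, namely $\partial_t\bar\rho=m\nabla\cdot(\cD\nabla\phi(\bar\rho/m))$, with $\bar\rho(\cdot,0)=m\rho_0$. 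Setting $\tilde\rho:=\bar\rho/m$ turns this into \eqref{pasqua3ace} with initial datum $\rho_0$, so the first term tends to $m\int G\,\tilde\rho(\cdot,t)\,dx$.

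For the second term, $N^{-d}\sum_{x\notin\cC(\o)}G(x/N)\eta_0(x)$, the particle numbers are frozen, so only the initial configuration enters. Its $\bar\mu^N$--mean $N^{-d}\sum_{x\notin\cC(\o)}G(x/N)\rho_0(x/N)$ converges $\bbQ$--a.s. to $(1-m)\int G\rho_0\,dx$ by the ergodic theorem, the indicator $\bbI(x\notin\cC(\o))$ being a stationary ergodic field of mean $1-m=1-\bbQ(0\in\cC(\o))$, while the conditional variance is again $O(N^{-d})$ by independence and finiteness of the second moments of $\nu_{\rho'}$. By Chebyshev this term converges in probability to $(1-m)\int G\rho_0\,dx$. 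Adding the two limits gives $\int G\,[\,m\tilde\rho(\cdot,t)+(1-m)\rho_0\,]\,dx=\int G\,\rho(\cdot,t)\,dx$, which is precisely \eqref{cammarota1}.

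The substantive work, namely the homogenization input and the Replacement Lemma, is entirely contained in Theorem \ref{annibale}, so the present statement is essentially a corollary of it combined with the blocking estimate of Lemma \ref{blocco}. The only points requiring care are the uniform control of the error in \eqref{solareace} and the handling of the joint randomness in $\o$ and in the initial occupation variables; both are routine given the stochastic domination by $\nu_{\rho'}$ and the finiteness of all moments of $\nu_{\rho'}$. The single piece of bookkeeping not to overlook is the rescaling $\tilde\rho=\bar\rho/m$ that relates the hydrodynamic equation \eqref{pasqua3} of Theorem \ref{annibale} to the equation \eqref{pasqua3ace} of the present statement.
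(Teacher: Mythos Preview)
Your proposal is correct and follows essentially the same route as the paper: split via \eqref{solareace}, apply Theorem \ref{annibale} to the infinite-cluster part after checking its hypotheses (domination, entropy bound \eqref{cardio}, initial profile $m\rho_0$), and use ergodicity plus a variance bound to identify the frozen finite-cluster contribution as $(1-m)\int G\rho_0$. The only cosmetic difference is that the paper obtains the latter limit by writing the off-cluster sum as the full sum minus the on-cluster sum, whereas you invoke the ergodic theorem directly on the indicator $\bbI(x\notin\cC(\o))$; both are equivalent, and your explicit remark on the rescaling $\tilde\rho=\bar\rho/m$ linking \eqref{pasqua3} to \eqref{pasqua3ace} is a useful clarification that the paper leaves implicit.
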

\begin{proof}
Since $\bar \mu^N$ is a product measure and the dynamics on
different clusters is independent, the process restricted to
$\cC(\o)$ has law $\bbP _{\o, \mu^N}$.
 As discussed at the end of
Section \ref{natalino}, to this last process we can apply Theorem
\ref{annibale}. Since, for $\bbQ$-a.a. $\o$ the initial
distributions  $\mu^N$ are associated to the macroscopic profile $m
\rho_0$, we conclude that for $\bbQ$-a.a. $\o$ it holds
\begin{equation}\label{carota1}
\lim _{N\uparrow \infty  } \bbP _{\o, \bar{\mu}^N}  \Big ( \Big|
N^{-d} \sum _{x\in \cC(\o) } G (x /N) \, \eta  _{ t} (x) -m \int
_{\bbR ^d} G (x) \tilde \rho (x,t ) dx  \Big|>\d \Big )=0\,.
\end{equation}
Let us now consider the evolution outside the infinite cluster. Let
us write
$$
N^{-d} \sum _{x\not \in \cC(\o) } G (x /N) \, \eta _{ 0} (x)= N^{-d}
\sum _{x\in \bbZ^d } G (x /N) \, \eta _{ 0} (x)-N^{-d} \sum _{x \in
\cC(\o) } G (x /N) \, \eta _{ 0} (x)\,.$$  We know that, when $\eta$
is sampled with distribution $\bar \mu^N$,  the addenda in the
r.h.s. converge in probability to $\int G(x) \rho_0(x)$ and $m \int
G(x)\rho_0(dx)$, for $\bbQ$--a.a. $\o$. As a consequence the l.h.s.
converges in probability to $(1-m)\int G(x) \rho_0(x)$ for
$\bbQ$--a.a. $\o$. In addition,  $$\sup_{N\geq 1}\Big\{ \int \bar
\mu^N(d\eta) N^{-d} \sum _{x: x/N\in \bar \D_G }  \eta  (x) \Big\}<
\infty\,.
$$
The above observations and  Lemma \ref{blocco} (cf.
\eqref{solareace}) imply that
\begin{equation}\label{carota2}
\lim _{N\uparrow \infty  } \bbP _{\o, \bar{\mu}^N}  \Big ( \Big|
N^{-d} \sum _{x\not \in \cC(\o) } G (x /N) \, \eta  _{ t} (x) -
(1-m) \int _{\bbR ^d} G (x)  \rho_0 (x ) dx  \Big|>\d \Big )=0\,.
\end{equation}
The thesis then follows from \eqref{carota1} and \eqref{carota2}.
\end{proof}

\bigskip

\bigskip

\noindent {\bf Acknowledgements}. The author kindly acknowledges the
Department of Mathematics of the University of L'Aquila for the kind
hospitality while part of this work was being done. She also thanks
the anonymous referee for suggesting the problem discussed in
Appendix \ref{cileno}.


\begin{thebibliography}{99}


\bibitem[A]{A} E.\ D.\ Andjel.  \emph{Invariant measures for the zero range process}.
Annals of Probability, {\bf 10}, 525--547 (1982).

\bibitem[AP]{AP} P.\ Antal, A.\ Pisztora, \emph{On the chemical
distance for supercritical Bernoulli percolation}.  Ann. Probab.,
{\bf 24}, 1036--1048 (1996).

\bibitem[BH]{BH} D.\ Ben-Avraham, S.\ Havlin,
\emph{Diffusion and Reactions in Fractals and Disordered Systems}.
Cambridge University Press, Cambdrige (2000).

\bibitem[BKL]{BKL} O.\ Benois, C.\ Kipnis, C.\ Landim. \emph{Large
deviations from the hydrodynamical limit of mean zero asymmetric
zero range processes}. Stochastic Process. Appl. {\bf 55}, 65--89
(1995).



\bibitem[BC]{BC} H.\ Br\'{e}zis, M.G.\ Crandall. \emph{Uniqueness of
solutions of the initial--value problem for $u_t-\D \varphi (u)
=0$}. J. Math. Pures and appl. {\bf 58}, 153--163 (1979).

\bibitem[BB]{BB} N.\ Berger, M.\ Biskup.
\emph{Quenched invariance principle for simple random walk on
percolation clusters}.
 Probab. Theory Related Fields  {\bf 137},  no. 1-2, 83--120 (2007).


\bibitem[CLO]{CLO} C--C.\ Chang, C.\ Landim, S.\  Olla.
\emph{Equilibrium fluctuations of asymmetric simple
exclusion processes in dimension $d\geq 3$}.
Probab. Theory Relat. Fields {\bf 119}, 381–-409 (2001).


\bibitem[BP]{BP}
M.\ Biskup, T.M.\ Prescott. \emph{ Functional CLT for random walk
among bounded random conductances}. Electronic Journal of
Probability {\bf 12}, 1323--1348 (2007).


\bibitem[DFGW]{DFGW} A.\ De Masi, P.\  Ferrari, S.\,  Goldstein, W.D.\, Wick. \emph{An invariance principle for reversible Markov processes. Applications to random motions in random
environments}. J. Statis. Phys. {\bf{55}} (3/4), 787--855 (1985).


\bibitem[DP]{DP} J.D.\ Deuschel, A.\ Pisztora. \emph{Surface order
large deviations for high--density percolation}. Probab. Theory
Related Fields {\bf 104},  467--482 (1996).



\bibitem[F]{F} A.\ Faggionato. \emph{Random walks and  exclusion processes among random
conductances on random infinite clusters: homogenization and
hydrodynamic limit.} Electronic Journal of Probability {\bf 13},
2217--2247 (2008).


\bibitem[F1]{F1} A.\ Faggionato. \emph{Bulk diffusion of 1D exclusion process
with bond disorder}. Markov Processes and Related Fields {\bf 13},
519--542 (2007).


\bibitem[FJL]{FJL}  A.\ Faggionato, C.\ Landim, D.M.\ Jara. \emph{Hydrodynamic behavior
of 1D
 subdiffusive exclusion processes
  with random conductances}.
 Prob. Theory and Related Fields 144, 633-667 (2009).

\bibitem[FM]{FM}
A.\ Faggionato, F.\ Martinelli. \emph{Hydrodynamic limit of a
disordered lattice gas}.  Probab. Theory and Related Fields  {\bf
127} (3), 535--608 (2003).


\bibitem[Fr]{Fr}  J.\ Fritz. \emph{Hydrodynamics in a symmetric random medium}.
  Comm. Math. Phys. {\bf 125}, 13--25 (1989).


\bibitem[FN1]{FN1} L.\ Fontes, C.M.\ Newman. \emph{
First Passage Percolation for Random Colorings of $\bbZ^d$}. Ann.
Appl. Probab. {\bf 3}, 746--762 (1993).


\bibitem[FN2]{FN2} L.\ Fontes, C.M.\ Newman. \emph{ Correction: First Passage Percolation for Random Colorings of
$\bbZ^d$}. Ann. Appl. Probab. {\bf 4}, 254--254 (1994).



\bibitem[GJ1]{GJ1}
 P.\ Goncalves, M.\ Jara. \emph{ Scaling limit of gradient systems in random environment}. J. Stat. Phys. {\bf 131}(4),
691--716 (2008).


\bibitem[GJ2]{GJ2}  P.\ Goncalves, M.\ Jara.
\emph{Density fluctuations for a zero-range process on the
percolation cluster}. Electronic Communications in Probability {\bf
14}, 382--395 (2009).

\bibitem[G]{G}  G.\ Grimmett. {\em Percolation.} Second edition. Springer,   Berlin (1999).

\bibitem[J]{J} M.\ Jara. {\em  Hydrodynamic limit for a zero-range
process in the Sierpinski gasket}.  Comm. Math. Phys. {\bf 288}
773--797 (2009).


\bibitem[JL]{JL} M.\ Jara, C.\ Landim. {\em  Nonequilibrium central limit theorem
  for a tagged particle in symmetric simple exclusion}.
   Annales de l'institut Henri Poincar\'{e} (B) Probabilit\'{e}s et Statistiques, {\bf 42},
   567--577, (2006).


\bibitem[Ke]{Ke} H.\ Kesten. \emph{ Percolation theory for
mathematicians}. Progress in Probability and Statistics, Vol. 2,
Birkhauser, Boston,  (1982).

\bibitem[KL]{KL}
C.\ Kipnis, C.\ Landim. \emph{ Scaling limits of interacting
particle systems}. Springer, Berlin (1999).



\bibitem[KV]{KV} C.\ Kipnis, S.R.S.\ Varadhan. \emph{Central limit
theorem for additive functionals of reversible Markov processes and
applications to simple exclusion}. Commun. Math. Phys. {\bf 104},
1--19 (1984).


\bibitem[LF]{LF} C.\ Landim, T.\ Franco. \emph{Hydrodynamic limit of gradient exclusion processes with
conductances}.     Archive for Rational Mechanics and Analysis  {\bf
195}, 409--439 (2010).

\bibitem[LM]{LM}
C.\ Landim, M.\ Mourragui. \emph{Hydrodynamic limit of mean zero
asymmetric zero range processes in infinite volume}. Annales de
l'Institut H. Poincar\'{e}, Prob. et Stat. {\bf 33} 65-82 (1997).


\bibitem[L]{L} T.M.\, Liggett. \emph{Interacting particle systems}.
 Springer,  New York  (1985).

\bibitem[M]{M} P.\ Mathieu. \emph{Quenched invariance principles for random walks with random
conductances}.  J. Stat. Phys. {\bf 130}, 1025-�1046 (2008).



\bibitem[MP]{MP}  P.\, Mathieu, A.L.\, Piatnitski. \emph{Quenched
invariance principles for random walks on percolation clusters}.
  Proceedings of the Royal Society A.   {\bf 463},   2287�-2307   (2007).



\bibitem[Q1]{Q1}
J.\ Quastel. \emph{Diffusion in disordered media}. In
\emph{Proceedings in Nonlinear Stochastic PDEs} (T. Funaki and W.
Woyczinky, eds), Springer, New York, 65--79 (1996).


\bibitem[Q2]{Q2}
J.\ Quastel, \emph{Bulk diffusion in a system with site disorder.}
Ann. Probab. {\bf 34} (5),   1990--2036  (2006)



\bibitem[S]{S}  T.\ Sepp\"al\"ainen, {\em  Translation Invariant Exclusion
    Processes}.  www.math.wisc.edu/$\sim$seppalai/ excl-book/etusivu.html.

\bibitem[SS]{SS}  V.\ Sidoravicius, A.-S.\ Sznitman. {\em
Quenched invariance principles for walks on clusters of percolation
or among random conductances}. Probab. Theory Related Fields {\bf
129},  219--244 (2004).


\bibitem[Val]{Val} F.J.\ Valentim. \emph{Hydrodynamic limit of gradient exclusion processes with conductances on
$\bbZ^d$}. Preprint (2009).


\bibitem[V]{V} S.R.S.\ Varadhan. \emph{Nonlinear diffusion limit for
a system with nearest neighbor interactions II}, in \emph{
Asymptotic Problems in Probability Theory: Stochastic Models and
Diffusion on Fractals}, edited by K. Elworthy and N. Ikeda, Pitman
Research Notes in Mathematics {\bf 283}, Wiley, 75--128  (1994).


\bibitem[Va]{Va}  J.L.\ V\'{a}zquez. \emph{The porous medium equation: mathematical theory}.
Clarendon Press, Oxford (2007).
\end{thebibliography}
\end{document}